\numberwithin{equation}{section}
\Crefname{equation}{Eq.}{Eqs.}
\newcommand{\BB}{\mathbb{B}}
\newcommand{\dd}{\,\mathrm{d}}
\newcommand{\dom}{\mathcal{D}}
\newcommand{\ee}{\mathrm{e}}
\newcommand{\ii}{\mathrm{i}}
\newcommand{\NN}{\mathbb{N}}
\newcommand{\pd}{\partial}
\newcommand{\ran}{\operatorname{ran}}
\newcommand{\RR}{\mathbb{R}}
\newtheorem{thm}{Theorem}[section]
\newtheorem{cor}[thm]{Corollary}
\newtheorem{lem}[thm]{Lemma}
\newtheorem{prop}[thm]{Proposition}
\theoremstyle{definition}
\newtheorem{defn}[thm]{Definition}
\theoremstyle{remark}
\newtheorem{remark}[thm]{Remark}
\title[Stability of self-similar blow-up solutions to superconformal NLW]{A Note on the Stability of Self-Similar Blow-Up Solutions for Superconformal Semilinear Wave Equations}
\author{Jie Liu}
\address{New York University Abu Dhabi}
\email{jl15817@nyu.edu}
\date{\today}
\begin{document}
	
	\begin{abstract}
		%We investigate the stability of self-similar blow-up solutions for superconformal semilinear wave equations in all dimensions. A central aspect of our analysis is the spectral equivalence of the linearized operators under Lorentz transformations in self-similar variables. This observation serves as a useful tool in proving mode stability and provides insights that may aid the study of self-similar solutions in related problems. As a direct consequence, we establish the asymptotic stability of the ODE blow-up family, extending the classical results of Merle and Zaag [Merle-Zaag, 2007, 2016] to the superconformal case and generalizing the recent findings of Ostermann [Ostermann, 2024] to include the entire ODE blow-up family.
		In this note, we investigate the stability of self-similar blow-up solutions for superconformal semilinear wave equations in all dimensions. A central aspect of our analysis is the spectral equivalence of the linearized operators under Lorentz transformations in self-similar variables. This observation serves as a useful tool in proving mode stability and provides insights that may aid the study of self-similar solutions in related problems. As a direct consequence, we establish the asymptotic stability of the ODE blow-up family, extending the classical results of Merle and Zaag [Merle-Zaag, 2007, 2016] to the superconformal case and generalizing the recent findings of Ostermann [Ostermann, 2024] to include the entire ODE blow-up family.
	\end{abstract}
%and Donninger [Donninger, 20] to the whole ODE blow-up family	
	\maketitle	
	
	\section{Introduction} 
	
	Consider the semilinear wave equation:
	\begin{align}\label{NLW-s1}
		\left\{\begin{array}{l}
			\partial_{tt} u - \Delta u =|u|^{p-1} u, \\
			u(0,x) = u_0(x), \quad \partial_t u(0, x) = u_1(x),
		\end{array}\right.
	\end{align}
	posed in space dimension $N \geq 1$, where the unknown function $u: [0,T) \times \mathbb{R}^N \to \mathbb{R}$, and the initial data satisfy $ (u_0, u_1)\in H_{loc}^k\left(\mathbb{R}^N\right) \times H_{loc}^{k-1}\left(\mathbb{R}^N\right), k>\frac{N}{2}$. The exponent $p>1$ governs the strength of the nonlinearity. In particular, we focus on the superconformal case, i.e.,
	\begin{align*}
		p > p_c \equiv 1+\frac{4}{N-1}, \quad N\ge 2.
	\end{align*}
	Here, $p_c$ is the conformal critical exponent. For the local well-posedness theory of the Cauchy problem \eqref{NLW-s1}, we refer to the classical results of Lindblad and Sogge \cite{lindblad1995existence} and Tao \cite{tao1999low}. The existence of blow-up solutions follows from standard ODE techniques or the energy-based blow-up criterion of Levine \cite{levine1974instability}. 
	
	\subsection{Background}
	The study of blow-up solutions for nonlinear wave equations has attracted significant attention over the past two decades.  For \Cref{NLW-s1}, two fundamental types of blow-up behavior have been identified: 
	\begin{itemize}
		\item 	\textbf{Type-I blow-up}, where the solution diverges at a finite time $T$ at a rate comparable to that of the self-similar ODE solution.
		\item  \textbf{Type-II blow-up}, where the solution remains bounded in a suitable critical norm, allowing for more complex blow-up dynamics.
	\end{itemize}
	
	In the one-dimensional setting ($N=1$), the blow-up theory has been comprehensively developed through a series of works by Merle and Zaag \cite{merle2003determination,merle2007existence,merle2008openness,merle2012existence,merle2012isolatedness}, as well as Côte and Zaag \cite{cote2013construction}. Their results address:
	\begin{itemize}
		\item[-] Blow-up rate \cite{merle2003determination},
		\item[-] Blow-up profiles  \cite{merle2007existence},
		\item[-] Regularity and slope of the blow-up curve \cite{merle2008openness},
		\item[-] Existence of non-characteristic points \cite{merle2012existence,merle2012isolatedness},
		\item[-] Construction of multi-soliton blow-up near non-characteristic points \cite{cote2013construction}.
	\end{itemize}
    A key feature of the one-dimensional case is that all self-similar solutions are obtained as Lorentz transforms of the fundamental ODE blow-up solution, which satisfies
    \begin{align}
    	u_T^*(t) = \frac{\kappa_0}{(T-t)^{\frac{2}{p-1}}},  \quad \kappa_0 = \left(\frac{2(p+1)}{(p-1)^2}\right)^{\frac{1}{p-1}}.
    \end{align}
    
    In higher dimensions ($N \geq 2$), the one-dimensional analogous self-similar solutions persist. More precisely, by exploiting the translation, time shift, and Lorentz invariance of \Cref{NLW-s1}, one can construct a family of self-similar Type-I blow-up solutions depending on $2N+2$ parameters $(T, x_0, d)$, given explicitly by
    \begin{align*}
    	u_{T,x_0,d}^*(t,x) = \kappa_0 \frac{\left(1-|d|^2\right)^{\frac{1}{p-1}}}{(T-t + d \cdot (x-x_0))^{\frac{2}{p-1}}},  \quad T>0,  \quad x_0\in \mathbb{R}^N, \quad |d|<1.
    \end{align*}
    These solutions are well defined within the backward light cone 
    $$
    \Omega^{1,N}(T,x_0) = \left\{(t,x) \in \mathbb{R}^{1,N} \mid |x-x_0|<T-t \right\}.
    $$    
    However, the program of dimension 1 breaks down. In contrast to the one-dimensional case, the classification of self-similar solutions in higher dimensions remains incomplete, except in the radial case outside the origin \cite{merle2011blow}. Indeed, it has been shown that there exist other smooth, radially symmetric self-similar solutions to \Cref{NLW-s1}, as demonstrated by P. Bizo\'n et al. \cite{bizon2010self, bizon2007self} in three dimensions for odd powers, and by W. Dai and T. Duyckaerts \cite{dai2021self} for $N\ge 3$ and energy-supercritical powers $1+\frac{4}{N-2}<p<1+\frac{4}{N-3}$, see also the recent works of Csobo, Glogić, and Schörkhuber \cite{csobo2024blowup}; Chen, Donninger, Glogić, McNulty, and Schörkhuber \cite{chen2024co}; and Kim \cite{kim2022self}.

	Nevertheless, partial results on blow-up theory are available. For subconformal nonlinearities ($p \leq p_c$), the blow-up rate has been established \cite{merle2003determination,merle2005determination}, and the stability of self-similar solutions $u_{T,x_0, d}^*$ has been proven by Merle and Zaag \cite{merle2015stability,merle2016dynamics}. As a corollary, they showed that the set 
	\begin{align*}
		\mathscr{R}_0=\left\{x_0 \in \mathscr{R}\,|\,\exists \, |d(x_0)| <1,\exists \; e(x_0)=\pm 1 \text { s.t. } \begin{pmatrix}
			U_{x_0}(s) \\
			\pd_s U_{x_0}(s)
		\end{pmatrix}\rightarrow e(x_0)  \begin{pmatrix}
		\kappa(d(x_0)) \\
		0
		\end{pmatrix}  \text{ in } \mathscr{H} \text { as } s \rightarrow \infty\right\}
	\end{align*}
	is open, where  $\mathscr{R}$ is the set of characteristic points, $U_{x_0}:= (T-t)^{\frac{2}{p-1}} u(t,x)$ is the blow-up profile of $u$ in self-similar variables, $\kappa(d)$ is the corresponding blow-up profile of $u^{*}_{T,x_0,d}$, and $\mathscr{H}$ is the energy space. Moreover, they obtained the differentiability of the blow-up surface $x \mapsto T(x)$. A different approach to stability is also presented in \cite{donninger2012stable}. 
	
	For superconformal exponents ($p > p_c$), significantly fewer results are available. Donninger et al. \cite{donninger2014stable, donninger2016blowup, donninger2017stable, donninger2017strichartz, donninger2020blowup, wallauch2023strichartz, ostermann2024stable} have used spectral-theoretic methods to establish the stability of the fundamental ODE blow-up solution $u_T^*$. These techniques have been extended to other self-similar solutions in supercritical regimes \cite{glogic2020threshold, glogic2021co, csobo2024blowup, chen2024co}. Recently, Kim \cite{kim2022self} proved the co-dimensional stability of self-similar profiles using a functional framework introduced in \cite{MRR22}.
	
	For energy-critical and supercritical nonlinear wave equations, another type of blow-up solution, known as \textit{Type-II blow-up}, arises. Comprehensive surveys on Type-II blow-up can be found in \cite{krieger2009slow,krieger2014full,krieger2020stability,hillairet2012smooth,collot2018type,duyckaerts2012universality,ghoul2018construction} and the references therein.
	
    %For self-similar blow-up in other wave-type equations, relevant results can be found in studies of wave maps \cite{donninger2011stable,costin2016proof,donninger2023optimal}, the Yang-Mills equation \cite{costin2016stability}, and the Skyrme model \cite{chen2023singularity,mcnulty2024singularity}.
	
	Despite these advances, the stability of the full family of self-similar solutions (including those obtained via Lorentz transformations) remains an open problem. Notably, the stability of the fundamental ODE blow-up solution does not directly imply the stability of its Lorentz-boosted counterparts, see our discussion in Section \ref{subsection-Lorentz}.

	\subsection{Main result}
	
	In this paper, we bridge this gap by establishing the spectral stability of the full family of self-similar solutions obtained via Lorentz transformations in the superconformal regime. Our approach leverages the Lorentz transformation in self-similar variables, extending the stability results of Merle and Zaag \cite{merle2016dynamics} to the superconformal setting. More precisely, we aim to prove the stability of general self-similar solutions $u_{T,x_0,d}^*$ for all $|d|<1$. 
	% As a corollary, we also prove that the set $\mathscr{R}_0$ is open. and obtain the differentiability of the blow-up surface $x \mapsto T(x)$.

	%we first introduce some notations. 
	%If $u$ is a blow-up solution of  \Cref{NLW-s1}, we define a 1-Lipschitz surface $\{(x,T(x))\}$ where $x\in \RR^N$ such that the domain of definition of $u$ is written as 
	%\begin{equation}\label{defdu}
		%D=\{(x,t)\;|\; t< T(x)\}.
	%\end{equation}
	%$\{(x,T(x))\}$ is called the blow-up surface of $u$.  A point $x_0\in{\RR}^N$ is a non-characteristic point if there are 
	%\begin{equation}\label{nonchar}
		%\delta_0\in(0,1)\mbox{ and }t_0<T(x_0)\mbox{ such that }
		%u\;\;\mbox{is defined on } \mathscr{C}_{x_0, T(x_0), \delta_0}\cap \{t\ge t_0\}
	%\end{equation}
	%where 
	%\begin{equation}\label{defcone}
		%\mathscr{C}_{\bar x, \bar t, \bar \delta}=\{(x,t)\;|\; t< \bar t-\bar \delta|x-\bar x|\}.
	%\end{equation}
	%If not, we say that $x_0$ is a characteristic point.
	%We denote by $\mathscr{R} \subset {\RR}^N$ the set of non-characteristic points and by $\mathscr{S}$ the set of characteristic points. 
	To state our results, we introduce the following similarity variables, for any $T>0$ and $x_0 \in \RR^N$:
	\begin{align*}
		u(t, x) = \frac{1}{\left(T-t\right)^{\frac{2}{p-1}}}  U_{T,x_0}(s, y), \quad s=-\log \left(T-t\right) + \log T, \quad y=\frac{x-x_0}{T-t}.
	\end{align*}
	The function $U_{T,x_0}$ (we write $U$ for simplicity)  satisfies the following equation:
	\begin{align}\label{nlw-ss-s2}
		\partial_{ss} U + \frac{p+3}{p-1} \partial_s U + 2 y\cdot\nabla \partial_s U +   \frac{2(p+1)}{p-1} y\cdot\nabla U + (y_i y_j - \delta_{ij}) \partial_i \partial_j U +\frac{2(p+1)}{(p-1)^2} U = |U|^{p-1} U
	\end{align}
	for all $|y|<1$ and $s \geq 0$. Stationary solutions to \Cref{nlw-ss-s2} are called self-similar solutions. For all $ d\in \BB^N$, the corresponding  self-similar profiles of $u^*_{T,x_0,d}$ are given by 
	\begin{align*}
		U^*_{T,x_0,d}(s,y) := \left(T-t\right)^{\frac{2}{p-1}} u^*_{T,x_0,d}(t,x) \equiv 	\kappa_d(y),
	\end{align*}
	where
	\begin{align*}
		\kappa_d(y)=\kappa_0 \frac{\left(1-|d|^2\right)^{\frac{1}{p-1}}}{(1+d \cdot y)^{\frac{2}{p-1}}},  \quad \kappa_0=\left(\frac{2(p+1)}{(p-1)^2}\right)^{\frac{1}{p-1}}, \quad \ |y|\le1.
	\end{align*}
	In particular, when $d=0$, we obtain the ODE blow-up profile
	\begin{align*}
		\kappa(0, y) \equiv \kappa_0. 
	\end{align*}
	 %For a smooth domain $\Omega\subset \RR^N$, we introduce the standard Sobolev space $\mathcal{H}^k(\Omega):= H^{k}(\Omega)\times H^{k-1}(\Omega), k\in \NN$. Let $k_0\in \NN$, $ k_0>\frac{N}{2}$, we also introduce the set 
	%\begin{equation}\label{defr0}
		%\mathscr{R}_0 = \{x_0\in \mathscr{R} \;|\;\exists\; |d(x_0)|<1,\exists\;e(x_0)=\pm 1
		%\mbox{ s.t. }  \begin{pmatrix}
			%U_{x_0}(s) \\
				%\pd_s U_{x_0}(s)
			%\end{pmatrix}\rightarrow e(x_0)  \begin{pmatrix}
		 		%\kappa(d(x_0)) \\
			%0
			%\end{pmatrix}  \text{ in } \mathcal{H}^{k_0}(\BB^N) \text { as } s \rightarrow \infty \}.
	 %\end{equation}	
	
	The main theorem is as follows.
	\begin{thm}
		\label{MainTHM}
		Let $d_0\in \BB^N$. Let $(N,p,k,R)\in\NN\times\RR_{>1}\times\NN\times\RR_{\ge1} $ with $k>\frac{N}{2}$ and $R<|d_0|^{-1}$. Put
		\begin{equation*}
			\omega_{p} = \min\{1,s_{p}\}
			\qquad\text{where}\qquad
			s_{p} = \frac{2}{p-1} \,.
		\end{equation*}
	   For all $ 0 < \varepsilon < \omega_{p}$ there are constants $0<\delta_{\varepsilon}<1-|d_0|$ and $C_{\varepsilon} > 1$ such that for all $0<\delta\leq\delta_{\varepsilon}$, $C\geq C_{\varepsilon}$ and all real-valued $(f,g)\in C^{\infty}(\RR^{N})\times C^{\infty}(\RR^{N})$ with
		\begin{equation*}
			\| (f,g) \|_{H^{k}(\RR^{N})\times H^{k-1}(\RR^{N})} \leq \frac{\delta}{C}
		\end{equation*}
		there exist parameters $d^{*}\in\overline{\BB^{N}_{\delta}(d_0)}\subset \BB^{N}$ and $T^{*}\in \overline{\BB^{1}_{\delta}(1)}$ and a unique solution $u\in C^{\infty}\big(\Omega^{1,N}_R(T^{*})\big)$ in the past light cone
		\begin{equation*}
			\Omega^{1,N}_R(T^{*}) = \left\{ (t,x) \in \RR^{1,N} \mid 0\leq t < T^{*}, \, |x| \leq R(T^{*} - t) \right\}
		\end{equation*}
		to the Cauchy problem
		\begin{equation}
			\label{CauchyProblem}
			\renewcommand{\arraystretch}{1.2}
			\left\{
			\begin{array}{rcll}
				\pd_{tt}u(t,x) - \Delta u(t,x)&=&|u(t,x)|^{p-1}u(t,x) \,, & \quad (t,x)\in \Omega_R^{1,N}(T^{*}) \,, \\
			    u(0,x) &=&u_{1,0,d_0}^{*}(0,x) + f(x) \,, & \quad x\in\BB^{N}_{RT^*} \,, \\
				(\pd_{t}u)(0,x) &=& (\pd_{t}u_{1,0,d_0}^{*})(0,x) + g(x) \,, & \quad x\in\BB^{N}_{RT^*} \,,
			\end{array}
			\right.
		\end{equation}
		such that the bounds
		\begin{align*}
			(T^{*}-t)^{-\frac{N}{2}+s_{p}+s}
			\left\| u(t,\,.\,) - u_{T^{*}, 0, d^{*}}^{*}(t,\,.\,) \right\|_{\dot{H}^{s}(\BB^{N}_{R(T^{*}-t)})}
			&\lesssim
			(T^{*}-t)^{\omega_{p} - \varepsilon}
			\intertext{for $s=0,1,\ldots,k$, and}
			(T^{*}-t)^{-\frac{N}{2}+s_{p}+s}
			\left\| \pd_{t}u(t,\,.\,) - \pd_{t}u_{T^{*},0, d^{*}}^{*}(t,\,.\,) \right\|_{\dot{H}^{s-1}(\BB^{N}_{R(T^{*}-t)})}
			&\lesssim
			(T^{*}-t)^{\omega_{p} - \varepsilon}
			\intertext{for $s=1,\ldots,k$, hold for all $0\leq t < T^{*}$.}
		\end{align*}
	\end{thm}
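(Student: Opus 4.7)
The plan is to follow the spectral-theoretic blueprint of Donninger and collaborators for the ODE profile $\kappa_0$, but to exploit the Lorentz equivalence in similarity variables, advertised in the abstract, so as to reduce the analysis around the tilted profile $\kappa_{d_0}$ to the already known case $d=0$. First I would introduce similarity variables adapted to a free blow-up time $T^{*}$ close to $1$ and center at the origin, then write the candidate solution as $u = u_{T^{*},0,d^{*}}^{*} + v$ for a free boost parameter $d^{*}\in\BB^{N}$ and a perturbation $v$. Passing to the first-order system in $(U,\pd_s U)$ and subtracting the stationary profile $\kappa_{d^{*}}$ yields an abstract evolution
\begin{equation*}
\pd_s\Psi = \mathcal{L}_{d^{*}}\Psi + \mathcal{N}(\Psi), \qquad \Psi(0,\cdot) = \Psi_0(f,g;T^{*},d^{*}),
\end{equation*}
on a suitable Sobolev space of pairs on $\BB^{N}$, with $\mathcal{L}_{d^{*}}$ generating a strongly continuous semigroup and $\mathcal{N}$ at least quadratic and locally Lipschitz.

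The central reduction is to show that the family $\{\mathcal{L}_{d}\}_{d\in\BB^{N}}$ is spectrally equivalent via pullback by a Lorentz transformation in self-similar coordinates. Concretely, I expect an explicit change of variables intertwining $\mathcal{L}_{d}$ with $\mathcal{L}_{0}$, transporting the spectrum, resolvent bounds, and spectral projectors known from the ODE case to arbitrary $d$. This yields, for $\mathcal{L}_{d_0}$, an $(N+1)$-dimensional geometric subspace generated by the $N+1$ symmetries (time shift plus $N$ Lorentz boosts), and a complementary part with spectrum contained in $\{\Re z \leq -\omega_{p}+\varepsilon\}$; consequently the restricted semigroup decays like $e^{-(\omega_{p}-\varepsilon)s}$ in the chosen norm.

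With the spectral picture in hand, I would run a Lyapunov--Perron modulation: the $N+1$ parameters $(T^{*},d^{*})$ are tuned so that $\Psi_0$ lies in the stable subspace of $\mathcal{L}_{d_0}$, and a Banach fixed point in a weighted norm comparable to $e^{(\omega_{p}-\varepsilon)s}\|\cdot\|_{H^{k}\times H^{k-1}}$ produces a unique small $\Psi$ on $s\geq 0$. Translating back to physical variables then gives the asserted rate $(T^{*}-t)^{\omega_{p}-\varepsilon}$. Higher regularity is propagated by commuting with derivatives and using the algebra property of $H^{k}$ for $k>N/2$, together with uniform-in-$d^{*}$ versions of the linear semigroup estimates over $\overline{\BB^{N}_{\delta}(d_0)}$.

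The main obstacle I anticipate is making the Lorentz equivalence rigorous at the operator-theoretic level: the Lorentz change of variables in similarity coordinates is not an isometry on the natural energy space, so one must track similarity factors carefully and verify that domains, spectral projectors, and semigroup bounds transform consistently between $\mathcal{L}_{d_0}$ and $\mathcal{L}_0$. Once this dictionary is fixed, the nonlinear closure is essentially standard, provided the linear estimates are shown to be uniform in the boost parameter on $\overline{\BB^{N}_{\delta}(d_0)}$, which is what allows the modulation map to be a contraction.
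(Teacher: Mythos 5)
Your overall blueprint coincides with the paper's: similarity variables, first-order formulation, linearization around $\kappa_{d}$, reduction to the ODE profile via the Lorentz transformation in self-similar variables, uniform-in-$d$ linear estimates, and a Lyapunov--Perron/modulation fixed point in the $N+1$ parameters $(T^{*},d^{*})$ followed by translation back to physical variables. The parameter count ($N$ boosts for eigenvalue $0$, one time shift for eigenvalue $1$) and the weighted-norm contraction are exactly as in the paper.

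However, there is a genuine gap in your central reduction as stated. You propose an explicit change of variables \emph{intertwining} $\mathcal{L}_{d}$ with $\mathcal{L}_{0}$ and transporting the spectrum, resolvent bounds, and spectral projectors wholesale. No such fixed bounded conjugacy on the energy space exists: the Lorentz transformation in self-similar variables shifts the time variable by the spatially dependent amount $s = s' - \log(1-d\cdot y')$, so it does not map a time slice $\{s=\mathrm{const}\}$ to a time slice $\{s'=\mathrm{const}\}$. It descends to a map on spatial profiles only for solutions with pure exponential time dependence $e^{\lambda s}\varphi(y)$, and even then the induced map on $\varphi$ carries the $\lambda$-dependent weight $(1-d\cdot y')^{-\lambda}$ (see the transformation \eqref{transform-eigen-eq}). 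Consequently the equivalence can only be used to transport the \emph{point} spectrum, eigenvalue by eigenvalue, not the resolvent or the projectors. The paper closes this gap differently: the essential spectra of $\mathbf{L}_{d}$ and $\mathbf{L}_{0}$ agree because the potential difference $\mathbf{L}_{d}'-\mathbf{L}_{0}'$ is compact (Kato's stability of the essential spectrum), the point spectra agree by the Lorentz argument applied to the eigenvalue equation, and the uniform resolvent bounds on $\overline{\mathbb{H}_{\omega_{0}}}$ away from $\{0,1\}$ (\Cref{prop-uniform-bound}) are proved directly, following Ostermann's argument with $\mathbf{L}_{0}$ replaced by $\mathbf{L}_{d_{0}}$, rather than being pulled back through the Lorentz map. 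You correctly flagged the non-isometry of the transformation as the main obstacle, but the resolution is not to ``track similarity factors carefully'' in a global conjugacy --- it is to abandon the global conjugacy and split the spectral analysis into point spectrum (Lorentz) plus essential spectrum and resolvent bounds (compact perturbation and direct estimates). With that substitution your argument matches the paper's.
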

	
\begin{remark}
	Due to the invariance under space and time translations, \Cref{MainTHM} also applies to the general solution $u_{T_0,x_0,d_0}^{*}$ for any $x_0 \in \mathbb{R}$ and $T_0 > 0$. Without loss of generality, we henceforth fix $x_0 = 0$ and $T_0 = 1$.
\end{remark}
	
	As a corollary, we obtain the following dynamics of equation \eqref{nlw-ss-s2} for initial data near $ \kappa_d(y)$  in self-similar variables, which extends the main result of \cite{merle2016dynamics} to the superconformal case.
	\begin{thm}[Behavior of solutions of equation \eqref{nlw-ss-s2} near $ \kappa(d, y)$] 	
		\label{thm-trapping}
		Let $d_0\in \BB^N$. Let $(N,p,k)\in\NN\times\RR_{>1}\times\NN$ with $k>\frac{N}{2}$. 
		There exist $\delta>0, C>0$ and $\varepsilon>0$ such that for any smooth solution $U $ of equation \eqref{nlw-ss-s2} satisfying that $(U,\pd_s U)$ are continuous in time with values in ${H}^k(\BB^N)\times {H}^{k-1}(\BB^N)$,  if
		\begin{align*}
		 \left\|\binom{U(0)}{\partial_s U(0)}-\binom{\kappa_{d_0}(y)}{0}\right\|_{{H}^k(\BB^N)\times {H}^{k-1}(\BB^N)} \leq \delta
		\end{align*}		
	then:
	\begin{itemize}
		\item[-] either $U(s)$ is not defined for all $(s,y) \in [0,+\infty)\times \BB^N$, 
		\item[-]  or		
		\begin{align*}
			\left\| \binom{U(s)}{\partial_s U(s)} \right\|_{{H}^k(\BB^N)\times {H}^{k-1}(\BB^N)} \rightarrow 0 \text { as } s \rightarrow \infty \text { exponentially fast }
		\end{align*}
		\item[-] or there exists $d^{*}\in\overline{\BB^{N}_{\delta}(d_0)}\subset \BB^{N}$  such that		
		\begin{align*}
			\forall s \geq 0, \  \left\|\binom{U(s)}{\partial_s U(s)}- \binom{\kappa_{d^*}(y)}{0} \right\|_{{H}^k(\BB^N)\times {H}^{k-1}(\BB^N)} \leq C \delta  e^{-\varepsilon s}.
		\end{align*}			 
	\end{itemize}
	\end{thm}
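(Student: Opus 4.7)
The plan is to deduce \Cref{thm-trapping} directly from \Cref{MainTHM} via the change of variables between self-similar and physical coordinates, followed by a trichotomy governed by the output blow-up time $T^{*}$.

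Fix $T=1$ and $x_0=0$, so that $s=-\log(1-t)$ and $y=x/(1-t)$. Differentiating the self-similar substitution yields $u(0,x)=U(0,x)$ and $\pd_t u(0,x)=s_p U(0,x)+\pd_s U(0,x)+x\cdot\nabla U(0,x)$, together with the analogous identity for $u^{*}_{1,0,d_0}$ (using $\kappa_{d_0}$ and $\pd_s U\equiv 0$). Subtracting and smoothly extending from $\BB^N$ to $\RR^N$ by a bounded extension operator produces $(f,g)\in C^{\infty}(\RR^N)\times C^{\infty}(\RR^N)$ satisfying $\|(f,g)\|_{H^k\times H^{k-1}}\lesssim\|(U(0)-\kappa_{d_0},\pd_s U(0))\|_{H^k\times H^{k-1}}\lesssim\delta$. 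Applying \Cref{MainTHM} with any fixed $R\in[1,|d_0|^{-1})$ (shrinking $\delta$ if necessary) yields parameters $(T^{*},d^{*})$ in $\overline{\BB^1_\delta(1)}\times\overline{\BB^N_\delta(d_0)}$ and a unique smooth solution $u$ on $\Omega^{1,N}_R(T^{*})$ obeying the stated decay bounds. The three alternatives of \Cref{thm-trapping} then correspond to the sign of $T^{*}-1$.

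If $T^{*}<1$, the Main Theorem's estimate forces $\|u(t)-u^{*}_{T^{*},0,d^{*}}(t)\|_{\dot{H}^k}/\|u^{*}_{T^{*},0,d^{*}}(t)\|_{\dot{H}^k}\to 0$ as $t\to T^{*}$, so $u$ itself diverges in $\dot{H}^k$ at $T^{*}<1$ and cannot be continued to the full cone $\Omega^{1,N}(1,0)$, placing us in the first alternative. If $T^{*}=1$, the change of variables $y=x/(1-t)$ combined with the $(1-t)^{s_p}$ prefactor gives the identity $\|U(s)-\kappa_{d^{*}}\|_{\dot{H}^{\sigma}(\BB^N)}=(1-t)^{\sigma+s_p-N/2}\|u(t)-u^{*}_{1,0,d^{*}}(t)\|_{\dot{H}^{\sigma}(\BB^N_{1-t})}\lesssim e^{-s(\omega_p-\varepsilon_0)}$; the bound for $\pd_s U$ in $\dot{H}^{\sigma-1}$ follows from the relation $\pd_s U(s,y)=(1-t)^{s_p+1}\pd_t u(t,(1-t)y)-s_p U(s,y)-y\cdot\nabla U(s,y)$ and the analogous relation for $u^{*}$, yielding the third alternative with rate $\varepsilon:=\omega_p-\varepsilon_0$. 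If $T^{*}>1$, the point $(1,0)$ lies in the interior of $\Omega^{1,N}_R(T^{*})$, so $u$ is $C^{\infty}$ near it; writing $U=U^{(1)}+U^{(2)}$ with
\begin{equation*}
	U^{(1)}(s,y) = \frac{\kappa_0(1-|d^{*}|^2)^{1/(p-1)}}{\left(1+(T^{*}-1)/(1-t)+d^{*}\cdot y\right)^{s_p}}
\end{equation*}
decaying like $e^{-s\cdot s_p}$ in every $C^m(\BB^N)$-norm, and $U^{(2)}$ controlled by the same scaling identity combined with the $L^{\infty}$-smoothness of $u-u^{*}_{T^{*},0,d^{*}}$ on a fixed compact neighborhood of $(1,0)$, one obtains the second alternative.

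The main obstacle is the scaling bookkeeping in the $T^{*}>1$ case. The $L^2$-scaling factor $(1-t)^{s_p-N/2}$ has negative exponent in the superconformal regime since $s_p=2/(p-1)<(N-1)/2<N/2$, but this is absorbed because integration takes place over the \emph{shrinking} ball $\BB^N_{1-t}$ (contributing a volume factor $(1-t)^{N/2}$) and because $u-u^{*}$ is uniformly bounded together with all of its derivatives on a fixed neighborhood of $(1,0)$; the net rate is $e^{-s\cdot s_p}$. Once this bookkeeping is carried out, the three cases combine directly into the stated trichotomy, with the exponential rate $\varepsilon$ in the third alternative inherited from $\omega_p-\varepsilon_0$ for any small $\varepsilon_0>0$.
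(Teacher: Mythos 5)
Your argument is correct in substance but takes a genuinely different route from the paper. You descend to physical variables, invoke \Cref{MainTHM} on the cone $\Omega^{1,N}_R(T^*)$, and read off the trichotomy from the sign of $T^*-1$ via the scaling identities $\|U(s)-\kappa_{d^*}\|_{\dot H^{\sigma}(\BB^N)}=(1-t)^{\sigma+s_p-N/2}\|u(t)-u^*_{T^*,0,d^*}(t)\|_{\dot H^{\sigma}(\BB^N_{1-t})}$. The paper instead never leaves self-similar variables: it applies the time-translation symmetry of \eqref{nlw-ss-s2} in the form $\tilde U(s',y')=(1+be^{s'})^{-s_p}U\bigl(s'-\log(1+be^{s'})+\log(1+b),\,y'/(1+be^{s'})\bigr)$ with $b=T^{-1}-1$, feeds the transformed data directly into \Cref{StableNonlinFlowClassic}, and then inverts the symmetry, so that the three alternatives fall out of a single algebraic formula according to the sign of $b^*=(T^*)^{-1}-1$ (the prefactor $\bigl(\tfrac{1+b^*}{1+b^*-b^*e^s}\bigr)^{s_p}$ either blows up at finite $s$, decays, or is identically $1$). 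What the paper's route buys is that it avoids exactly the two pieces of bookkeeping your version must carry: the norm scaling on shrinking balls in the superconformal regime, and--more importantly--the identification of the \Cref{MainTHM} solution $u$ with the solution generated by the given $U$. You use this identification tacitly in all three cases (e.g.\ when you transfer the divergence of $\|u(t)\|_{\dot H^k}$ at $T^*<1$ back to non-extendability of $U$), and it requires an explicit appeal to finite speed of propagation and uniqueness on the inner cone $\{|x|\le 1-t\}$, since the data fed into \Cref{MainTHM} is an \emph{extension} of $U(0,\cdot)$ beyond $\BB^N$; you should also note that this extension must land in $C^{\infty}(\RR^N)$ with $H^k\times H^{k-1}$ norm controlled by $\delta$, as \Cref{MainTHM} demands. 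In the paper's version the corresponding step is a one-line uniqueness assertion for the fixed-point problem \eqref{CauchyClassical}, with no extension needed. Neither of these is a fatal gap--both are standard--but they are the places where your write-up would need to be fleshed out; otherwise the case analysis, including the $e^{-s_p s}$ decay in the $T^*>1$ case obtained from smoothness of $u$ near $(1,0)$ in the interior of the larger cone, is sound.
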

\begin{remark}
	Our result remains valid in the subconformal and conformal cases, providing an alternative proof of the previously established results in \cite{merle2007existence,merle2016dynamics}.
\end{remark}
  
	%\begin{remark}	As a corollary, one obtain that the set $\mathscr{R}_0$ is open for $k>\frac{N}{2}$. 
		%[Openness of $\mathscr{R}_0$] and regularity of the blow-up surface
		%and the blow-up surface $x\mapsto T(x)$ is of class $C^1$ on $\mathscr{R}_0$. Moreover, for all $x_0\in \mathscr{R}_0$, $\nabla T(x_0)=d(x_0)$ and $e(x_0)$ is constant on connected components of $\mathscr{R}_0$, where $d(x_0)$ and $e(x_0)$ are defined in \eqref{defr0}.
%	\end{remark}

\subsection{Idea of the proof} 

As pointed out in \cite{merle2016dynamics}, the energy method based on the existence of a Lyapunov functional fails in the superconformal case. In fact, in our setting, the corresponding Lyapunov functional does not even exhibit dissipation. To overcome this difficulty, Donninger and collaborators \cite{donninger2012stable,donninger2014stable,donninger2016blowup,donninger2017stable,donninger2017strichartz,donninger2020blowup,donninger2024spectral} developed a spectral method, successfully deriving the asymptotic stability of the ODE blow-up solution, i.e., $u^*_{T}$. This spectral approach is particularly powerful as it is well suited to the non-selfadjoint nature of the linearized problem, especially in the presence of unstable modes.  See also our recent extension \cite{ghoul2025blow}, which generalizes this method to accommodate non-compact perturbations.

Originally introduced by Donninger in the context of self-similar blow-up solutions for wave maps \cite{donninger2011stable,donninger2012wavemap,donninger2023optimal}, this method has found applications in various settings \cite{costin2016proof,costin2016stability,costin2017mode,chen2023singularity,chen2024co,mcnulty2024singularity}. A key step in this approach is to establish the mode stability of the linearized operator around the self-similar solution, which requires proving that the only non-negative eigenvalues of the linearized operator are $0$ and $1$. The mode stability problem reduces to solving the eigenvalue equation, which takes the form of a second-order elliptic PDE with singular coefficients:
\begin{align}\label{eigen-eq-intro}
	\left(\lambda^2+\frac{p+3}{p-1}\lambda + \frac{2(p+1)}{(p-1)^2} - p|\kappa_d|^{p-1}  \right)\varphi + \left(2\lambda+\frac{2(p+1)}{p-1} \right)y\cdot \nabla \varphi  +(y_iy_j - \delta_{ij})\partial_{i}\partial_j \varphi =0,
\end{align}
where
$$
|\kappa_d|^{p-1}  = \kappa_0^{p-1} \frac{\left(1-|d|^2\right)}{(1+d \cdot y)^{2}},
$$
see \Cref{defn-eigen}.

For the special case $d=0$, the equation \eqref{eigen-eq-intro} can be decomposed in spherical coordinates, reducing the problem to a family of one-dimensional hypergeometric ODEs, which admit explicit solutions (see \cite{ostermann2024stable}). However, when $d\neq 0$, this decomposition fails due to the presence of an additional singularity introduced by $\kappa_d$, namely the set 
$$
\{ y \in \mathbb{R}^N \mid d \cdot y = -1 \}.
$$
Even in one dimension, the resulting equation becomes a Heun-type ODE, which is known to be analytically intractable in general \cite{donninger2024spectral}.

To circumvent this difficulty, we employ a Lorentz transformation in self-similar variables. This transformation was previously introduced by Merle and Zaag in the one-dimensional case \cite{merle2007existence} and later extended to higher dimensions \cite{merle2016dynamics}, where it was used to establish the coercivity of the energy around $\kappa_d$ in the subconformal setting. Our key observation is that this transformation allows us to establish full spectral equivalence of the linearized operators for all $d \in \mathbb{B}^N$. Consequently, the mode stability problem for $\kappa_d$ can always be reduced to the mode stability of the ODE blow-up solution $\kappa_0$, which has already been established in \cite{ostermann2024stable}. As a direct corollary, we obtain the asymptotic stability of $\kappa_d$.

The remainder of the paper is organized as follows. In \Cref{sec:mode}, we establish mode stability and prove the equivalence of the discrete spectrum under Lorentz transformations. In \Cref{sec:linear}, we extend this equivalence to the full spectrum and derive the linear stability of $\kappa_d$. In \Cref{sec:nonlinear}, we prove the main result on nonlinear stability. Since the nonlinear analysis largely follows the strategy developed in \cite{ostermann2024stable} for the ODE blow-up $\kappa_0$, we defer it to the appendix for completeness, emphasizing only the necessary adaptations to extend the result to the entire ODE blow-up family.

\subsection{Notation}
For consistency, we adopt the notation from \cite{ostermann2024stable}. The Einstein summation convention is assumed throughout. The sets of natural and real numbers are denoted by $\mathbb{N}$ and $\mathbb{R}$, respectively. In the $N$-dimensional Euclidean space $\mathbb{R}^N$, the open ball of radius $R>0$ centered at the origin is denoted by $\mathbb{B}_R^N \subset \mathbb{R}^N$, while the corresponding closed ball is given by $\overline{\mathbb{B}_R^N} \subset \mathbb{R}^N$.

For $w\in \mathbb{R}$, we denote $\mathbb{H}_{w}=\{z\in \mathbb{C} \mid \operatorname{Re} z >w\}$ for a right half-plane and $\overline{\mathbb{H}_{w}} = \{z\in \mathbb{C} \mid \operatorname{Re} z \ge w\}$ a closed right half-plane.

We use boldface notation for tuples of functions, for example:
\begin{align*}
	\mathbf{f} \equiv\left(f_1, f_2\right) \equiv\left[\begin{array}{l}
		f_1 \\
		f_2
	\end{array}\right] \quad \text { or } \quad \mathbf{q}(t, .) \equiv\left(q_1(t, .), q_2(t, .)\right) \equiv\left[\begin{array}{l}
		q_1(t, .) \\
		q_2(t, .)
	\end{array}\right].
\end{align*}
Linear operators that act on tuples of functions are also displayed in boldface notation. For a closed linear operator $\mathbf{L}$ on a Banach space, we denote its domain by $\mathcal{D}(\mathbf{L})$, its spectrum by $\sigma(\mathbf{L})$, and its point spectrum by $\sigma_p(\mathbf{L})$. The resolvent operator is denoted by $\mathbf{R}_{\mathbf{L}}(z):=(z-\mathbf{L})^{-1}$ for $z \in \rho(\mathbf{L})=\mathbb{C} \backslash \sigma(\mathbf{L})$. The space of bounded operators on a Banach space $\mathcal{X}$ is denoted by $\mathcal{L}(\mathcal{X})$.

For details on the spectral theory of linear operators, we refer to \cite{kato1995}. The theory of strongly continuous operator semigroups is covered in the textbook \cite{engel2000one}.
	
	%Fix $T_0>0, x_0\in \RR^N$, and $d_0 \in \BB^N$, we consider the stability of the self-similar blow-up solution $u^*_{T_0,x_0,d}$ in the following sense: for initial data $(u_0, u_1)$ close to $(u^*_{T,x_0,d}(0,x),\partial_t u^*_{T,x_0,d}(0,x)$,  the corresponding solution $u(t,x)$ to the Cauchy problem \eqref{NLW-s1}  blows up in a finite time $T^*$ and converges to some nearby self-similar blow-up solution $u^*_{T^*,x_0,d^*}(t,x)$ as $t\to T^*$, where $(T^*, d^*)$ is close to $(T_0,d_0)$.where $|d^*- d_0|$ is small with respect to the size of the initial perturbation. 

	\section{Equivalence of Discrete Spectra and Mode Stability}\label{sec:mode}
	
	In this section, we introduce the Lorentz transformation in self-similar variables and establish the equivalence of the discrete spectra of the linearized operator associated with equation \eqref{nlw-ss-s2} around each stationary solution $\kappa_d$, where $d \in \mathbb{B}^N$. As a consequence, we derive the mode stability of these linearized operators.
	
	Consider then $U(s, y)$ a solution to equation \eqref{nlw-ss-s2}. For $|d|<1$, let $U  =  \kappa_d + \eta$, then the corresponding linearized equation at $\kappa_d $ is given by 
		\begin{align}\label{eq:linearized-ss}
			\begin{aligned}
		&\partial_{ss} \eta + \frac{p+3}{p-1} \partial_s \eta + 2 y\cdot\nabla \partial_s \eta +   \frac{2(p+1)}{p-1} y\cdot\nabla \eta + (y_i y_j - \delta_{ij}) \partial_i \partial_j \eta +\left(\frac{2(p+1)}{(p-1)^2} -  p|\kappa_d|^{p-1} \right) \eta = 0.
	\end{aligned}
	\end{align}
	%where
	%$$ N(\eta) = |\kappa_d+\eta|^{p-1} (\kappa_d+\eta) - \kappa_d^p -  p|\kappa_d|^{p-1} \eta .$$
	Denote 
	$$ \mathbf{q} = \begin{pmatrix}
		q_1 \\ q_2
	\end{pmatrix}
	 =  \begin{pmatrix}
	 	\eta \\ \partial_s \eta + y\cdot\nabla  \eta + \frac{2}{p-1} \eta
	 \end{pmatrix}, $$
	then Eq. \eqref{eq:linearized-ss} can be rewritten as the following first-order system
	\begin{align*}
		\partial_s \begin{pmatrix}
			q_1 \\ q_2
		\end{pmatrix} & = \begin{pmatrix}
		-y\cdot \nabla q_1 - \frac{2}{p-1}q_1 + q_2\\
		\Delta q_1 + p|\kappa_d|^{p-1} q_1  -y\cdot \nabla q_2 - \frac{p+1}{p-1}q_2 
		\end{pmatrix} =:  \tilde{\mathbf{L}}_d  \begin{pmatrix}
		q_1 \\ q_2
		\end{pmatrix}
		\end{align*}

%		\begin{align*}
	%	\mathbf{N}(\mathbf{q} ) := \begin{pmatrix}
		%	0 \\ |\kappa_d + q_1|^{p-1}(\kappa_d+q_1) - \kappa_d^p - p|\kappa_d|^{p-1} q_1
		%\end{pmatrix}.
		%\end{align*}

\subsection{The Lorentz transformation in self-similar variables}\label{subsection-Lorentz}
 We recall the N-dimensional Lorentz transformation $\mathcal{T}_\beta$ and its inverse $\mathcal{T}^{-1}_\beta = \mathcal{T}_{-\beta}$ such that $(t',x') = \mathcal{T}_\beta (t,x)$ are defined as follows
\begin{align}
	\left\{ \begin{aligned}
		&t' = \gamma (t-\beta\cdot x), \\ %\frac{}{\sqrt{1-|\beta|^2}}, \\
		&x'_i = - \gamma \beta_i t + x_i + \frac{\gamma^2\beta_i}{1+\gamma} (\beta\cdot x),
	\end{aligned}\right. \quad \text{~and~} \left\{ \begin{aligned}
		&t = \gamma (t'+\beta\cdot x'), \\ 
		&x_i =  \gamma \beta_i t' + x'_i + \frac{\gamma^2\beta_i}{1+\gamma} (\beta\cdot x'),
	\end{aligned}\right.
\end{align}
where $\gamma = (1-|\beta|^2)^{-\frac12}$ and $|\beta|<1$. Let $v(t',x') = u(t,x) = u(t(t',x'),x(t',x'))$, then by the Lorentz transformation invariance of \eqref{NLW-s1}, if $u$ solves \eqref{NLW-s1}, $v(t',x')$ satisfies the same equation
\begin{align}\label{nlw-transform}
	v_{t't'} - \Delta v = |v|^{p-1}v.
\end{align}
And under this transformation, the self-similar blow-up solution 
\begin{align*}
	u^*_{T,x_0, d}(t,x) &= \kappa_0 \frac{\left(1-|d|^2\right)^{\frac{1}{p-1}}}{(T-t + d \cdot (x-x_0))^{\frac{2}{p-1}}}, 
\end{align*}
corresponds to 
\begin{align*}
	v^*_{T,x_0, d, \beta}(t',x') &:=  \kappa_0 \frac{\left(1-|d|^2\right)^{\frac{1}{p-1}}}{\left(T-d\cdot x_0 + \gamma(d\cdot \beta -1) t' + \left( \frac{\gamma^2 \beta\cdot(d-\beta)}{1+\gamma}\beta + (d-\beta) \right) \cdot x' \right)^{\frac{2}{p-1}}}. 
\end{align*}
In particular, if we take $\beta = d$, then $v^*_{T,x_0, d, \beta}$ is independent of the spatial variable $x'$, i.e 
$$v^*_{T,x_0,d,d}(t',x')  =  \frac{ \kappa_0}{\left(\frac{T-d\cdot x_0}{\sqrt{1-|d|^2}}-t'\right)^{\frac{2}{p-1}}} =   \frac{ \kappa_0}{(T'-t')^{\frac{2}{p-1}}}\equiv v^*_{T'}, \quad T'= \frac{T-d\cdot x_0}{\sqrt{1-|d|^2}}, $$ 
which is the ODE blow-up solution to Eq. \eqref{nlw-transform}. Note that even with this transformation it is not easy to directly reduce the stability of $u^*_{T,x_0, d}$ to the stability of the ODE blow-up $v^*_{T'}$ due to the following reasons:
\begin{enumerate}
	\item The initial perturbations to $u_{T,x_0,d}^*$ and $v^*_{T'}$ are imposed on different regions, i.e. $\{(t,x) \in \mathbb{R}^2 : t=0\}$ and  $\{(t',x')  \in \mathbb{R}^2 : t'=0\} = \{ (t,x) \in \mathbb{R}^2 : t- d\cdot x=0\}$ respectively.
	\item The stable evolution in $t'$ direction does not necessarily imply a stable evolution in $t$ direction.
	%\item The stability problem of $v_{\alpha,\infty,\kappa,T,x_0}$ has its own technical difficulties, for example, the essential spectrum of the linearized operator at $v_{\alpha,\infty,\kappa,T,x_0}$ is different with the free wave operator and unstable. We will discuss this in detail in our future work.
\end{enumerate}
Nevertheless, we shall see that it allows us to study the spectrum of the linearized operator at $u_{T,x_0,d}^*$ by analyzing the spectrum of the linearized operator at $v^*_{T'}$ in self-similar variables, which facilitates a rigorous proof of the mode stability of $\kappa_d$. To begin with, we consider the Lorentz transformation in self-similar variables which was first introduced by \cite{merle2007existence} for one-dimensional NLW with power nonlinearities. 

For $T'>0, x_0'\in \mathbb{R}$, define the corresponding self-similar variables for $t'$ and $x'$ as 
$$ s' = -\log(T'-t') + \log(T'), \quad y' =\frac{x'-x_0'}{T'-t'},$$
and let $V(s',y') = (T'- t')^{\frac{2}{p-1}} v(t',x')$, if $v(t',x')$ solves Eq. \eqref{nlw-transform}, then $V(s',y')$ satisfies \eqref{nlw-ss-s2} as well, i.e.,
\begin{align}\label{nlw-ss-transform}
	\begin{aligned}
		\partial_{s's'} V + \frac{p+3}{p-1} \partial_{s'} V + 2 y'\cdot\nabla \partial_{s'} V +   \frac{2(p+1)}{p-1} y'\cdot\nabla V + (y'_i y'_j - \delta_{ij}) \partial_i \partial_j V +\frac{2(p+1)}{(p-1)^2} V = |V|^{p-1} V
	\end{aligned}
\end{align}
A direct computation implies the following transformation in self-similar variables, see also \cite[Lemma A.1.]{merle2016dynamics}
\begin{lem}[The Lorentz transform in self-similar variables]\label{prop-lorentz-ss} Consider $U(s,y)$ a solution to \eqref{nlw-ss-s2} defined for all $|y|<1$, and introduce for any $|\beta|<1$, the function $V \equiv \mathcal{T}_{\beta} U$ defined by
	\begin{align*}
		V(s',y') = \left(\frac{1}{\gamma(1-\beta\cdot y')}\right)^{\frac{2}{p-1}} U(s,y), \text{~where~}  s =s' - \log(1-\beta\cdot y'), \quad y=\frac{y'-\gamma\beta + \frac{\gamma^2 \beta\cdot y'}{1+\gamma}\beta}{\gamma(1-\beta \cdot y')},
	\end{align*}
	then $V(s',y')$ solves \eqref{nlw-ss-transform} for all $|y'|<1$.
\end{lem}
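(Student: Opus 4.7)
The natural approach is to work in spacetime: lift $U$ to a solution of the NLW, apply the classical Lorentz invariance of the NLW, and then descend back to similarity variables. This avoids a brute-force chain rule applied directly to \eqref{nlw-ss-s2}. After the harmless normalisation $T=1$, $x_0=0$, the first step is to set
$$u(t,x) \;:=\; (1-t)^{-\tfrac{2}{p-1}}\, U\!\left(-\log(1-t),\ \tfrac{x}{1-t}\right)$$
on the backward light cone $\{t<1,\,|x|<1-t\}$. Running in reverse the standard chain rule that derives \eqref{nlw-ss-s2} from the NLW shows that $u$ solves $\pd_{tt}u-\Delta u=|u|^{p-1}u$ there.

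Since $\pd_{tt}-\Delta$ and the pointwise nonlinearity are invariant under $\mathcal{T}_\beta$, the function $v(t',x'):=u\bigl(\mathcal{T}_\beta^{-1}(t',x')\bigr)$ solves the NLW on the boosted light cone, whose apex is $(T',x_0'):=\mathcal{T}_\beta(1,0)=(\gamma,-\gamma\beta)$. I would then introduce primed similarity variables $s'=-\log(T'-t')+\log T'$, $y'=(x'-x_0')/(T'-t')$ and set $V(s',y'):=(T'-t')^{2/(p-1)} v(t',x')$. The same standard computation as above then yields \eqref{nlw-ss-transform} for $V$, and the only remaining task is to read off the explicit coordinate identification. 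Writing $x'=x_0'+(T'-t')y'$ and substituting into the inverse Lorentz formulas, the algebraic simplification (which uses $\gamma^2(1-|\beta|^2)=1$, equivalently $\gamma^2|\beta|^2=\gamma^2-1$ together with the key cancellation $1+\gamma^2|\beta|^2/(1+\gamma)=\gamma$) produces
$$T-t=\gamma(T'-t')(1-\beta\cdot y'),\qquad x_i=(T'-t')\!\left[y'_i-\gamma\beta_i+\tfrac{\gamma^2\beta_i}{1+\gamma}\,\beta\cdot y'\right].$$
Dividing the second identity by $T-t$ immediately produces the claimed formula for $y$, and combining the first with $V=[(T-t)/(T'-t')]^{-2/(p-1)} U$ gives $V=[\gamma(1-\beta\cdot y')]^{-2/(p-1)} U$, as stated.

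The $s$-relation obtained in this way agrees with the lemma up to an additive constant $-\log\gamma$; this is immaterial because \eqref{nlw-ss-s2} is autonomous in $s$, so the shift may simply be absorbed into the reference time $T'$ used to define $s'$ (equivalently, into an allowed translation $s'\mapsto s'-\log\gamma$). I expect no real obstacle: the only step needing care is the algebra above, and in particular the cancellation $1+\gamma^2|\beta|^2/(1+\gamma)=\gamma$ that reduces the raw Lorentz formula to its similarity-variable form. An alternative, fully self-contained route is to plug the ansatz $V(s',y')=[\gamma(1-\beta\cdot y')]^{-2/(p-1)} U(s,y)$ directly into \eqref{nlw-ss-transform} and verify it by chain rule on the unit ball in $y'$, but this is substantially longer and less illuminating than the spacetime argument.
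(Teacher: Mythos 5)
Your proposal is correct and is essentially the paper's (unwritten) argument: the paper dismisses the lemma as ``a direct computation'' citing Merle--Zaag, and the surrounding text of Section 2.1 sets up exactly your spacetime route --- lift to a solution of the NLW on the cone, boost by $\mathcal{T}_\beta$, and descend via the primed similarity variables based at the boosted apex $(T',x_0')=(\gamma,-\gamma\beta)$; your algebraic identities $T-t=\gamma(T'-t')(1-\beta\cdot y')$ and the formula for $x$ check out. One trivial bookkeeping remark: with the paper's normalisation $s'=-\log(T'-t')+\log T'$ the additive constant in the $s$-relation is $-2\log\gamma$ rather than $-\log\gamma$, but as you say this is absorbed by the autonomy of \eqref{nlw-ss-transform} in $s'$.
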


\subsection{Unstable eigenvalues induced by symmetries}
	We postpone the rigorous functional setup of the linearized operator $\tilde{\mathbf{L}}_{d}$ to the next section, and regard $\tilde{\mathbf{L}}_{d}$ as a formal differential operator. We first study the possible unstable eigenvalues of $\tilde{\mathbf{L}}_{d}$ as a step towards proving its mode stability. 
	\begin{defn}\label{defn-eigen}
		$\lambda \in \mathbb{C}$ is called an eigenvalue of $\tilde{\mathbf{L}}_{d}$ if there exists a nontrivial smooth function $\varphi\in C^\infty(\overline{\mathbb{B}})$ such that 
		\begin{align}\label{eigen-eq}
			\left(\lambda^2+\frac{p+3}{p-1}\lambda + \frac{2(p+1)}{(p-1)^2} - p|\kappa_d|^{p-1}  \right)\varphi + \left(2\lambda+\frac{2(p+1)}{p-1} \right)y\cdot \nabla \varphi  +(y_iy_j - \delta_{ij})\partial_{i}\partial_j \varphi =0. 
		\end{align}
		We call an eigenvalue $\lambda$  unstable if its real part is non-negative, i.e., $\operatorname{Re} \lambda \ge 0$. Otherwise, we call $\lambda$ a stable eigenvalue.
	\end{defn}
	\begin{remark}
		Actually, if $\tilde{\mathbf{L}}_{d} \mathbf{q} = \lambda \mathbf{q}$ for some $\mathbf{q} = (q_1,q_2)^T \in (C^\infty(\overline{\mathbb{B}}))^2$, then $q_1$ satisfies \eqref{eigen-eq} and $\lambda $ is an eigenvalue of $\tilde{\mathbf{L}}_{d}$. Conversely, if $\lambda $ is an eigenvalue of $\tilde{\mathbf{L}}_{d}$ with a smooth eigenfunction $\varphi(y)$, then $\mathbf{q} := (\varphi, \lambda\varphi + y\cdot \nabla \varphi + \frac{2}{p-1}\varphi)^T$ satisfies $\tilde{\mathbf{L}}_{d} \mathbf{q} = \lambda \mathbf{q}$.
	\end{remark}
	
	%\begin{remark}
		%It is equivalent to define eigenvalues of $\tilde{\mathbf{L}}_{d}$ such that the corresponding eigenfunctions are only in the Sobolev space $H^{k+1}(\mathbb{B})$ for $k>\frac{N}{2}$. This is due to the fact that the eigen-equation \eqref{eigen-eq} is a second-order elliptic equation whose solutions are always $C^\infty$ smooth in $\mathbb{B}$. Moreover, we shall prove that any $H^{k+1}(\mathbb{B})$ solution to \eqref{eigen-eq} is also smooth (and even analytical) at the unit sphere $\mathbb{S}$, see Proposition \ref{prop-1dsol-space}.    
		%The exponent $k_\alpha+2$ of the Sobolev regularity is required to exclude possible unstable modes, since $\tilde{\mathbf{L}}_{d}$ may allow for infinitely many different unstable eigenvalues in low regularity space. We shall discuss this later.
%	\end{remark}
		
	First, the symmetries of Eq. \eqref{nlw-ss-s2} give us some explicit unstable eigenvalues. More precisely, if $U(s,y)$ is a solution, then the following transformations of $U(s,y)$ are also solutions to \eqref{nlw-ss-s2}:
	\begin{itemize}
		\item (Space translation in $x$) for any $a \in \mathbb{R}^N$, the function
		%defined for all $s \in\left[-\log \left(t_0-t_1\right),+\infty\right)$ and $y \in$ $\left(-a e^s-1,-a e^s+1\right)$ by
		\begin{align*}
			U_1(s, y)= U\left(s, y+a e^s\right);
		\end{align*}
		\item (Time translation in $t$) for any $b \in \mathbb{R}$, the function 
		%$w_2(y, s)$ defined for all $s \geqslant-\log \left(t_0-t_1-b\right)$ and $|y|<$ $1+b e^s$ by
		\begin{align*}
			U_2(s, y)= \left(1+b e^s\right)^{-\frac{2}{p-1}}U\left(s-\log \left(1+b e^s\right), \frac{y}{1+b e^s}\right);
		\end{align*} %\left(1+b e^s\right)^{-\frac{2}{p-1}}
		%\item(Time translation in $s$)  for any $c \in \mathbb{R}$, the function 
		%$w_3(y, s)$ defined for all $|y|<1$ and $s \in\left[-\log \left(t_0-t_1\right)-c,+\infty\right)$ by
		%\begin{align*}
			%w_3(s,y) = w(s+c, y);
		%\end{align*}
		%\item(Scaling)  for any $\mu \in \mathbb{R}$, the function 
		%$w_5(y, s)$ defined for all $|y|<1$ and $s \in [-\log (t_0-t_1-(\lambda-1)T)-\log(\lambda),+\infty )$ by
		%\begin{align*}
		%	U_5(s,y)= U\left(\frac{y}{1+ (\mu-1)T e^s}, s-\log \left(1+(\mu-1)T e^s\right) + \log(\mu) \right).
		%\end{align*}
		%This invariance can be viewed as a composition of the time translation in $t$ and $s$, with $b=(\mu-1)T$ and $c= \log(\mu)$.
		\item(Lorentz transformation) The transposition in self-similar variables of the Lorentz transformation 
		\begin{align*}
			U_3(s, y)= \mathcal{T}_{\beta}U\left(s, y\right);
		\end{align*}
	\end{itemize}
    Thus, for $U = \kappa_d(y)$, we have
	\begin{itemize}
		\item for all $1\le i\le N$, $\partial_{a_i} U_1|_{a=0}$ and $ \partial_{b} U_2|_{b=0}$ generate  the same 1d eigenspace of eigenvalue 1; 
		%\item $\partial_{c} U_3|_{c=0}$ and $ \partial_{d} U_4|_{d=0}$ generate the same 1d eigenspace of eigenvalue 0;
		\item  for $1\le i\le N$, $\partial_{\beta_i} U_3|_{\beta=0} $ generates a N dimensional eigenspace of eigenvalue 0.
	\end{itemize}
 Therefore, the linearized operator $\tilde{\mathbf{L}}_{d}$ has at least two unstable eigenvalues $0$ and $1$. More precisely, we have 
 
 	\begin{lem}[{\cite[Lemma 3.2]{ostermann2024stable}}]
 	\label{EV}
 	Let $(N,p,R)\in \NN\times\RR_{>1}\times\RR_{\ge 1}$, $d\in \BB^N$, and denote $s_p := \frac{2}{p-2}$. Let $\mathbf{f}_{0,d,i},\mathbf{f}_{1,d}\in C^{\infty}(\overline{\BB^{N}_R})^{2}$ be given by
 	\begin{align}
 		\label{EV0beta}
 		\mathbf{f}_{0,d,i}(y) &=
 		\begin{bmatrix}
 			\hfill s_{p} \kappa_0 \gamma(d)^{-s_{p}} (1+d\cdot y)^{-s_{p}-1} y_{i} \\
 			s_{p} (s_{p}+1) \kappa_0 \gamma(d)^{-s_{p}} (1+d\cdot y)^{-s_{p}-2} y_{i}
 		\end{bmatrix}
 		\\&\nonumber\,+
 		\begin{bmatrix}
 			\hfill s_{p} \kappa_0 \gamma(d)^{-s_{p}+2} (1+d\cdot y)^{-s_{p}} d_{i} \\
 			s_{p}^{2} \kappa_0 \gamma(d)^{-s_{p}} (1+d\cdot y)^{-s_{p}-1} d_{i}
 		\end{bmatrix}
 		\,, \\
 		\label{EV1beta}
 		\mathbf{f}_{1,d}(y) &=
 		\begin{bmatrix}
 			\hfill s_{p} \kappa_0 \gamma(d)^{-s_{p}} (1+d\cdot y)^{-s_{p}-1} \\
 			s_{p} (s_{p}+1) \kappa_0 \gamma(d)^{-s_{p}} (1+d\cdot y)^{-s_{p}-2}
 		\end{bmatrix}
 		\,,
 	\end{align}
 	for $i=1,\ldots,N$. Then
 	\begin{equation*}
 		\tilde{\mathbf{L}}_{d} \mathbf{f}_{0,d,i} = \mathbf{0}
 		\qquad\text{and}\qquad
 		(\mathbf{I}-\tilde{\mathbf{L}}_{d}) \mathbf{f}_{1,d} = \mathbf{0} \,.
 	\end{equation*}
 \end{lem}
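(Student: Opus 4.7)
The plan is to realize the two claimed families of eigenfunctions of $\tilde{\mathbf{L}}_d$ as infinitesimal generators of the natural symmetries of \eqref{nlw-ss-s2} acting on the stationary profile $\kappa_d$. Since $\{\kappa_d\}_{d\in\BB^N}$ is a smooth family of stationary solutions to \eqref{nlw-ss-s2}, each derivative $\partial_{d_i}\kappa_d$ is a time-independent solution of the linearization \eqref{eq:linearized-ss}; passing to the first-order vector form therefore produces an eigenvector of $\tilde{\mathbf{L}}_d$ with eigenvalue $0$. Analogously, applying the time-translation symmetry $U_{2}$ to the stationary solution $\kappa_d$ gives $U_{2}(s,y) = \kappa_{0}\gamma(d)^{-s_p}(1+be^{s}+d\cdot y)^{-s_p}$, whose derivative at $b=0$ has the form $e^{s}\phi(y)$, so the spatial factor $\phi$ is an eigenfunction of $\tilde{\mathbf{L}}_d$ with eigenvalue $1$.

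Carrying this out concretely reduces to chain-rule bookkeeping. Using $\partial_{d_{i}}\gamma(d)^{-s_p} = -s_p d_{i}\gamma(d)^{-s_p+2}$ together with $\partial_{d_{i}}(1+d\cdot y)^{-s_p} = -s_p y_{i}(1+d\cdot y)^{-s_p-1}$, one sees that $-\partial_{d_i}\kappa_d$ matches exactly the first component of $\mathbf{f}_{0,d,i}$ in \eqref{EV0beta}; similarly, the spatial factor of $\partial_{b}U_{2}|_{b=0}$ coincides, up to a sign, with the first component of $\mathbf{f}_{1,d}$ in \eqref{EV1beta}. The respective second components are then pinned down by the conversion relation recorded in the remark after Definition~\ref{defn-eigen}, namely $q_{2} = (\lambda+s_p)q_{1} + y\cdot\nabla q_{1}$. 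Evaluating this for $\lambda\in\{0,1\}$ and collapsing the outcomes via the elementary identity $(1+d\cdot y) - d\cdot y \equiv 1$ — which merges the two resulting rational functions $(1+d\cdot y)^{-s_p-1}$ and $(d\cdot y)(1+d\cdot y)^{-s_p-2}$ into a single term — recovers the stated second components.

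The only real obstacle is careful algebraic bookkeeping: one must track the two distinct $\gamma(d)$-weights ($\gamma(d)^{-s_p}$ versus $\gamma(d)^{-s_p+2}$) produced, respectively, by differentiating the rational factor $(1+d\cdot y)^{-s_p}$ and the prefactor $(1-|d|^{2})^{1/(p-1)}$ of $\kappa_d$. A fully mechanical alternative bypasses the symmetry motivation altogether: substitute $\mathbf{f}_{0,d,i}$ and $\mathbf{f}_{1,d}$ directly into $\tilde{\mathbf{L}}_d\mathbf{f}_{0,d,i} = \mathbf{0}$ and $(\mathbf{I}-\tilde{\mathbf{L}}_{d})\mathbf{f}_{1,d} = \mathbf{0}$, factor out the common $\kappa_{0}\gamma(d)^{-s_p}$, and verify the resulting polynomial identities in powers of $(1+d\cdot y)$ — a finite, elementary computation that does not require any appeal to the underlying symmetries.
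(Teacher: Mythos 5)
Your approach is the same as the paper's: the paper obtains these modes precisely as the symmetry generators (the Lorentz/boost direction, i.e.\ $\partial_{d_i}\kappa_d$, for $\lambda=0$, and time translation for $\lambda=1$), citing \cite[Lemma 3.2]{ostermann2024stable} for the explicit formulas, and your direct-substitution fallback is the standard mechanical check. One caveat: if you actually carry out the computation $q_2=y\cdot\nabla q_1+s_pq_1$ on the boost mode, the $d_i$-proportional part of the second component comes out as $s_p^2\kappa_0\gamma(d)^{-s_p+2}(1+d\cdot y)^{-s_p-1}d_i$, since the term originates from differentiating the prefactor $(1-|d|^2)^{1/(p-1)}$ and the weight $\gamma(d)^{-s_p+2}$ survives the identity $(1+d\cdot y)-d\cdot y=1$; this disagrees by a factor $\gamma(d)^2$ with the formula \eqref{EV0beta} as printed, so your assertion that the computation ``recovers the stated second components'' should be replaced by an explicit note that \eqref{EV0beta} carries a typo ($\gamma(d)^{-s_p}$ should read $\gamma(d)^{-s_p+2}$ in that slot), as otherwise the printed $\mathbf{f}_{0,d,i}$ does not satisfy the first row of $\tilde{\mathbf{L}}_d\mathbf{f}_{0,d,i}=\mathbf{0}$ for $d\neq 0$.
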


\subsection{Mode stability}\label{subsec:mode-stability}
   Nevertheless, we introduce the following notion of mode stability, see \cite{donninger2011stable,donninger2024spectral}. 
   \begin{defn}[Mode stability]
   	We say that the blow-up solution $\kappa_d$ is mode stable if the existence of a nontrivial smooth function $\varphi \in C^{\infty}(\overline{\mathbb{B}})$ that satisfies Eq. \eqref{eigen-eq} necessarily implies that $\lambda \in \{0,1\}$ or $\operatorname{Re} \lambda<0$.
   \end{defn}
   
  % \begin{remark}
   %	In fact, we shall prove a stronger result than the mode stability above, replacing  $\operatorname{Re} \lambda<0$ by $\operatorname{Re} \lambda\le -1$. This could help us derive a uniform spectral gap, see Proposition \ref{prop-spectrum}. 
  % \end{remark}
   To study the mode stability of $\kappa_d$, we need to solve Eq. \eqref{eigen-eq}. Note that $\kappa_d(y)$ is non-radial, thus we are not able to reduce \eqref{eigen-eq} to one-dimensional ODEs by using spherical harmonics decomposition, see \cite{donninger2016blowup,ostermann2024stable}. Moreover, $\kappa_d(y)$ has singular points $\{y\in \mathbb{R}^N: d\cdot y =-1\}$, which makes the ODE analysis of \eqref{eigen-eq} complicated. 	
	
   To deal with this problem, we note that  $\kappa_d$ can be transformed to the ODE blow-up profile using the Lorentz transformation. More precisely,   
	 for $\kappa_d(y) =\kappa_0 \frac{\left(1-|d|^2\right)^{\frac{1}{p-1}}}{(1+d \cdot y)^{\frac{2}{p-1}}}$, 
	the corresponding function $V_{d,\beta}(y') \equiv \mathcal{T}_{\beta} \kappa(d,\cdot)$ solves Eq. \eqref{nlw-ss-transform} and satisfies
	\begin{align*}
		V_{d,\beta}(y') &=  \left(\frac{1}{\gamma(1-\beta\cdot y')}\right)^{\frac{2}{p-1}}  \frac{ \kappa_0 \left(1-|d|^2\right)^{\frac{1}{p-1}}}{(1+d \cdot y)^{\frac{2}{p-1}}}\\
		& =  \left(\frac{1}{\gamma(1-\beta\cdot y')}\right)^{\frac{2}{p-1}}  \frac{\kappa_0 \left(1-|d|^2\right)^{\frac{1}{p-1}}}{\left(1+\beta \cdot \frac{y'-\gamma\beta + \frac{\gamma^2 \beta\cdot y'}{1+\gamma}\beta}{\gamma(1-\beta \cdot y')} + (d-\beta)\cdot y \right)^{\frac{2}{p-1}}}\\
		&  =  \left(\frac{1}{\gamma(1-\beta\cdot y')}\right)^{\frac{2}{p-1}}   \frac{\kappa_0  \left(1-|d|^2\right)^{\frac{1}{p-1}}}{\left( \frac{1}{\gamma^2 (1-\beta\cdot y')}+ (d-\beta)\cdot y \right)^{\frac{2}{p-1}}}\\
		& =  \frac{\kappa_0  \left( \gamma^2 (1-|d|^2)\right)^{\frac{1}{p-1}}}{\left( 1 + \gamma^2 (1-\beta\cdot y')(d-\beta)\cdot y \right)^{\frac{2}{p-1}}}
	\end{align*}
	Again, when $\beta =d$, $V_{d,d} = \kappa_0(y) \equiv \kappa_0 $ is the ODE blow-up profile. Hereafter, we always take $\beta = d$. Let $V= V_{d,d} + \xi = \kappa_0 + \xi$, we derive the linearized equation of  Eq. \eqref{nlw-ss-transform}  at $V_{d,d} $ 
	\begin{align}\label{linearized-ss-transform}
		\partial_{s's'} \xi +  \frac{p+3}{p-1} \partial_{s'} \xi + 2 y'\cdot\nabla \partial_{s'} \xi +   \frac{2(p+1)}{p-1} y'\cdot\nabla \xi + (y'_i y'_j - \delta_{ij}) \partial_i \partial_j \xi +\left(\frac{2(p+1)}{(p-1)^2} -  p|\kappa_0|^{p-1} \right) \xi  = 0.
	\end{align}	
	Denote $\mathbf{r} = (r_1, r_2) = (\xi, \partial_{s'} \xi + y'\partial_{y'} \xi + \frac{2}{p-1}\xi)$, then we can rewrite \eqref{linearized-ss-transform} as a first-order PDE system
	\begin{align}\label{linear-ss-system-transform}
		\partial_s \left(\begin{array}{l}
			r_1  \\
			r_2  
		\end{array}\right) &= \left(\begin{array}{c}
			- y'\cdot \nabla r_1 -\frac{2}{p-1} r_1 + r_2 \\
			\Delta r_1 + p|\kappa_0|^{p-1} r_1  -y'\cdot \nabla r_2 - \frac{p+1}{p-1}r_2 
		\end{array}\right)  = \tilde{\mathbf{L}}_0  \left(\begin{array}{l}
			r_1  \\
			r_2  
		\end{array}\right).
	\end{align}
%	\begin{defn}
	%	$\lambda \in \mathbb{C}$ is called an eigenvalue of $\mathbf{L}^{\prime}_\alpha$ if there exists a nontrivial smooth function $\psi\in C^{\infty}(\overline{\mathbb{B}})$ such that 
%		\begin{align}\label{eigen-eq-transform}
	%		(\lambda^2-\lambda )\psi + \left(2\lambda y'+ 2 \sqrt{1+\alpha} \right)\partial_{y'} \psi  +(y'^2-1)\partial_{y'y'} \psi =0. 
		%\end{align}
	%\end{defn}
	%Apparently, if $\tilde{\mathbf{L}}_{d} q = \lambda q$ for some $q = (q_1,q_2) \in (C^\infty((\overline{\mathbb{B}})))^2$, then $q_1$ satisfies \eqref{eigen-eq} and $\lambda $ is an eigenvalue of $\tilde{\mathbf{L}}_{d}$. Conversely, if $\lambda $ is an eigenvalue of $\tilde{\mathbf{L}}_{d}$, $q = (\varphi, \lambda\varphi + y\partial_y \varphi)$ satisfies $\tilde{\mathbf{L}}_{d} q = \lambda q$.
	
	Following \Cref{prop-lorentz-ss}, we show that the Lorentz transformation $\mathcal{T}_d$ also transforms the linearized equation around $\kappa_d$ to the linearized equation around $\kappa_0$.
	\begin{prop}[Transformation of the linearized equation]\label{prop-transform-linear-op} Consider $\eta(s,y)$ a solution to the  linearized equation \eqref{eq:linearized-ss} around $\kappa_d(\cdot)$, and introduce $\xi \equiv \mathcal{T}_d \eta$, then $\xi$ satisfies the  linearized equation \eqref{linearized-ss-transform} around $\kappa_0(\cdot)\equiv \kappa_0$.
	\end{prop}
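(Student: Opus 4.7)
The key idea is that \Cref{prop-lorentz-ss} establishes $\mathcal{T}_d$ as a symmetry of the nonlinear equation \eqref{nlw-ss-s2} sending $\kappa_d$ to $\kappa_0$, and the map $\eta \mapsto \mathcal{T}_d\eta$ is linear at the level of functions (it is multiplication by $(\gamma(1-d\cdot y'))^{-2/(p-1)}$ composed with a smooth change of coordinates). Since linearization commutes with smooth symmetries, the linearized operator at $\kappa_d$ must be conjugated into the linearized operator at $\kappa_0$ by $\mathcal{T}_d$. My plan is to make this principle rigorous through a formal one-parameter expansion, so that essentially no new chain-rule computation is needed beyond what already underlies \Cref{prop-lorentz-ss}.

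Introduce the nonlinear operators $N[U]$ and $N'[V]$ given by the LHS minus RHS of \eqref{nlw-ss-s2} and \eqref{nlw-ss-transform} respectively, and write $\mathcal{L}_d[\eta]$ for the scalar linearized operator appearing in \eqref{eq:linearized-ss} and $\mathcal{L}_0[\xi]$ for the one in \eqref{linearized-ss-transform}. The chain-rule computation behind \Cref{prop-lorentz-ss} in fact produces a pointwise intertwining identity
\begin{equation*}
N'[\mathcal{T}_d U](s',y') = J(s',y')\, N[U]\bigl(s(s',y'),\, y(s',y')\bigr)
\end{equation*}
for a smooth, non-vanishing weight $J$; the assertion that $\mathcal{T}_d$ maps solutions to solutions is just the zero-set consequence of this identity. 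Now substitute the affine family $U_\varepsilon := \kappa_d + \varepsilon\eta$ and use linearity of $\mathcal{T}_d$ to write $\mathcal{T}_d U_\varepsilon = \kappa_0 + \varepsilon\,\mathcal{T}_d\eta$. Both sides of the identity above depend smoothly on $\varepsilon$, since the only non-polynomial dependence enters through the $C^{\infty}$ nonlinearity $U\mapsto|U|^{p-1}U$ (recall $p>1+\tfrac{4}{N-1}>3$, so $|U|^{p-1}U$ is smooth near $\kappa_d>0$ and $\kappa_0>0$). Using $N[\kappa_d]=0=N'[\kappa_0]$ and differentiating the identity in $\varepsilon$ at $\varepsilon=0$ yields
\begin{equation*}
\mathcal{L}_0[\mathcal{T}_d\eta](s',y') = J(s',y')\, \mathcal{L}_d[\eta]\bigl(s(s',y'),\, y(s',y')\bigr),
\end{equation*}
and the hypothesis $\mathcal{L}_d\eta = 0$ immediately gives $\mathcal{L}_0[\mathcal{T}_d\eta] = 0$, which is the claim.

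The main obstacle, such as it is, is purely bookkeeping: one must either exhibit the weight $J$ explicitly from the chain-rule identities already contained in \Cref{prop-lorentz-ss}, or invoke the soft argument above, which requires only the symmetry statement of that lemma together with smooth $\varepsilon$-dependence of the nonlinear terms. I would favour the latter route, since it avoids a direct verification of the lengthy transformation laws for $\partial_{ss}$, $y\cdot\nabla\partial_s$, and $(y_iy_j-\delta_{ij})\partial_i\partial_j$ under $\mathcal{T}_d$, and makes it transparent that \Cref{prop-transform-linear-op} is the linearized shadow of \Cref{prop-lorentz-ss}. The single place one must be careful is to check that $\kappa_d$ and $\kappa_0$ stay bounded away from zero on the relevant cones, so that $U \mapsto |U|^{p-1}U$ is genuinely smooth along the family $U_\varepsilon$ for small $\varepsilon$; this follows from the explicit formula for $\kappa_d$ on $\overline{\BB^N}$ together with $|d|<1$.
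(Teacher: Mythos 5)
Your argument is correct and is essentially the paper's own proof, which likewise observes that $\mathcal{T}_d$ is a symmetry of the nonlinear equation sending $\kappa_d$ to $\kappa_0$ (with the nonlinearity picking up the weight $(\gamma(1-d\cdot y'))^{-2p/(p-1)}$) and concludes that the linearized equations are intertwined; your $\varepsilon$-differentiation of the pointwise intertwining identity merely makes the paper's ``consequently'' explicit. One small slip: the inequality $1+\frac{4}{N-1}>3$ fails for $N\ge 4$, but this is harmless since the smoothness of $u\mapsto|u|^{p-1}u$ along $U_\varepsilon$ follows from $\kappa_d$ being bounded below by a positive constant on $\overline{\BB^{N}}$, not from $p>3$.
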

\begin{proof}
	Observe that the full nonlinear wave equation \eqref{nlw-ss-s2} is invariant under the Lorentz transformation $\mathcal{T}_d$, and that $\mathcal{T}_d \kappa_d = \kappa_0$. Moreover, the transformation $\mathcal{T}_d$ maps the nonlinear term $|U|^{p-1} U$ to $\left(\gamma(1 - d \cdot y') \right)^{-2p/(p-1)} |V|^{p-1} V$. Consequently, $\mathcal{T}_d$ maps the linearized equation around $\kappa_d$ to the corresponding equation around $\kappa_0$.
\end{proof}

	\begin{cor}[Equivalence of the discrete spectrum]\label{cor-eigen}
		$\lambda$ is an eigenvalue of $\tilde{\mathbf{L}}_{d}$ if and only if $\lambda$ is an eigenvalue of $\tilde{\mathbf{L}}_{0}$.
	\end{cor}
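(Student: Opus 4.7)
The plan is to translate the eigenvalue equation into a separated solution of the linearized flow, apply the transformation of \Cref{prop-transform-linear-op}, and read off a separated solution of the linearized flow at $\kappa_0$ whose profile is the sought-after eigenfunction. By symmetry of the construction (replace $d$ by $-d$, equivalently use $\mathcal{T}_d^{-1}=\mathcal{T}_{-d}$), the correspondence is a bijection of eigenvalues.

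First I would observe, using the remark following \Cref{defn-eigen}, that $\lambda$ is an eigenvalue of $\tilde{\mathbf{L}}_d$ with smooth eigenfunction $\varphi\in C^\infty(\overline{\BB^N})$ if and only if
\begin{equation*}
\eta(s,y) = e^{\lambda s}\varphi(y)
\end{equation*}
is a nontrivial smooth solution of the linearized equation \eqref{eq:linearized-ss} around $\kappa_d$ on $\RR\times\overline{\BB^N}$. Apply the Lorentz transformation $\mathcal{T}_d$ from \Cref{prop-lorentz-ss}: the transformed function
\begin{equation*}
\xi(s',y') = \bigl(\gamma(1-d\cdot y')\bigr)^{-s_p}\,\eta\bigl(s'-\log(1-d\cdot y'),\,y(y')\bigr)
\end{equation*}
solves the linearized equation \eqref{linearized-ss-transform} around $\kappa_0$, by \Cref{prop-transform-linear-op}. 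Substituting the separated form of $\eta$ yields
\begin{equation*}
\xi(s',y') = e^{\lambda s'}\,\psi(y'),
\qquad
\psi(y') := \gamma^{-s_p}\,(1-d\cdot y')^{-\lambda - s_p}\,\varphi\bigl(y(y')\bigr).
\end{equation*}

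The key verification is that $\psi\in C^\infty(\overline{\BB^N})$ and is nontrivial. Since $|d|<1$, the map $y'\mapsto y(y')$ is a real-analytic diffeomorphism of $\overline{\BB^N}$ onto itself (it is the spatial restriction of a Lorentz boost, which preserves the light cone); moreover $1-d\cdot y'\ge 1-|d|>0$ on $\overline{\BB^N}$, so the prefactor $(1-d\cdot y')^{-\lambda-s_p}$ is smooth and non-vanishing there. Smoothness of $\varphi$ then gives smoothness of $\psi$, and non-triviality of $\psi$ follows from that of $\varphi$ together with invertibility of $y'\mapsto y(y')$. Hence $\psi$ is a smooth eigenfunction of $\tilde{\mathbf{L}}_0$ at the same eigenvalue $\lambda$, proving one direction.

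The reverse implication is identical: given a smooth eigenfunction at $\kappa_0$, apply $\mathcal{T}_d^{-1}=\mathcal{T}_{-d}$, which by \Cref{prop-transform-linear-op} (with $d$ replaced by $-d$, using $\mathcal{T}_{-d}\kappa_0 = \kappa_d$) produces a separated solution of \eqref{eq:linearized-ss} around $\kappa_d$ whose spatial profile is smooth on $\overline{\BB^N}$. The main (and essentially only) obstacle is this smoothness-and-nontriviality check; the rest is algebraic manipulation. No growth conditions at $\partial\BB^N$ need to be monitored, since $|d|<1$ keeps the prefactors $(1\pm d\cdot y')$ uniformly bounded above and away from zero on the closed unit ball.
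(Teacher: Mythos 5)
Your proposal is correct and follows essentially the same route as the paper: conjugate the separated solution $e^{\lambda s}\varphi$ by the Lorentz transformation of \Cref{prop-lorentz-ss}, read off the transformed profile $\psi(y')=\gamma^{-s_p}(1-d\cdot y')^{-\lambda-s_p}\varphi(y(y'))$ via \Cref{prop-transform-linear-op}, and invert with $\mathcal{T}_{-d}$ for the converse. The only difference is that you spell out the smoothness and nontriviality of $\psi$ (diffeomorphism of $\overline{\BB^N}$, non-vanishing prefactor), which the paper leaves implicit.
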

	\begin{proof}
		If $\lambda$ is an eigenvalue of $\tilde{\mathbf{L}}_{d}$, then there exists a smooth function $\varphi\in  C^{\infty}(\overline{\mathbb{B}})$ such that $e^{\lambda s} \varphi(y)$ solves the linearized equation \eqref{eq:linearized-ss}. By the Lorentz transformation and Proposition \ref{prop-transform-linear-op}, we have
		$$\mathcal{T}_{d}(e^{\lambda s} \varphi)(s',y') = e^{\lambda s'} \left( 1-d \cdot y'\right)^{-\lambda} \left(\frac{1}{\gamma(1-\beta\cdot y')}\right)^{\frac{2}{p-1}} \varphi\left( \frac{y'-\gamma d + \frac{\gamma^2 d\cdot y'}{1+\gamma}d}{\gamma(1-d \cdot y')} \right) := e^{\lambda s'} \psi(y') $$
		solves the linearized equation \eqref{linearized-ss-transform}, where $\gamma = \frac{1}{\sqrt{1-|d|^2}}$. Thus, $\psi(y')\in  C^{\infty}(\overline{\mathbb{B}})$ satisfies \eqref{eigen-eq} with $d=0$, i.e., $\lambda$ is also an eigenvalue of $\tilde{\mathbf{L}}_{0}$. Conversely, by the inverse Lorentz transformation, any eigenvalue of $\tilde{\mathbf{L}}_{0}$ is also an eigenvalue of $\tilde{\mathbf{L}}_{d}$. 
	\end{proof}
	
	\begin{remark}
		According to the proof of Corollary \ref{cor-eigen}, the following transformation (induced by the Lorentz transformation $\mathcal{T}_{d}$)
		\begin{align}\label{transform-eigen-eq}
			\psi(y') =\left( 1-d \cdot y'\right)^{-\lambda} \left(\frac{1}{\gamma(1-\beta\cdot y')}\right)^{\frac{2}{p-1}} \varphi\left( \frac{y'-\gamma d + \frac{\gamma^2 d\cdot y'}{1+\gamma}d}{\gamma(1-d \cdot y')} \right), \quad \gamma = \frac{1}{\sqrt{1-|d|^2}}
		\end{align}
		actually gives the transformation of the eigen-equation \Cref{eigen-eq}  to itself with $d=0$.
	\end{remark}
	Thus, the mode stability of $\kappa_d$ is equivalent to that of $\kappa_0$, the latter of which has already been established in a series of works by Donninger and his collaborators \cite{donninger2012stable,donninger2016blowup}; see also Ostermann \cite[Lemma 3.3]{ostermann2024stable}.
	\begin{lem}\label{lem-mode-stability-0} $\kappa_0$ is mode stable, i.e. there exist no eigenvalues of $\tilde{\mathbf{L}}_{0}$ such that $\operatorname{Re} \lambda >-1$, expect $0$ and $1$.
	\end{lem}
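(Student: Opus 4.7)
Since $\kappa_0$ is the constant ODE blow-up profile, we have $\kappa_0^{p-1}=\frac{2(p+1)}{(p-1)^2}$, so the eigenvalue equation \eqref{eigen-eq} at $d=0$ reduces to
\begin{equation*}
	\left(\lambda^2+\tfrac{p+3}{p-1}\lambda - \tfrac{2(p+1)(p-1)}{(p-1)^2}\right)\varphi + \left(2\lambda+\tfrac{2(p+1)}{p-1}\right) y\cdot\nabla\varphi + (y_iy_j-\delta_{ij})\partial_i\partial_j\varphi = 0,
\end{equation*}
which is rotation-invariant. My plan is to exploit this rotational symmetry via a spherical harmonics decomposition. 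Writing $\varphi(y)=\sum_{\ell\ge 0}\sum_{m}\varphi_{\ell,m}(\rho)Y_{\ell,m}(\omega)$ with $\rho=|y|$ and $\omega=y/|y|$, the eigenvalue equation separates into a countable family of one-dimensional ODEs in $\rho$ on $[0,1]$, one for each angular momentum $\ell$.

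The next step is to recognize each radial ODE as a hypergeometric equation on $(0,1)$, after a change of variable such as $z=\rho^2$. The singular points sit precisely at $\rho=0$ (the origin) and $\rho=1$ (the boundary of the light cone), which are the only loci where smoothness is a nontrivial constraint. For each $\ell$, I would determine the Frobenius indices at $\rho=0$ and $\rho=1$ and select the regular Frobenius solution at the origin. This picks out a unique solution up to a constant, which can be written explicitly in terms of $\,_2F_1$. Smoothness at $\rho=1$ then becomes an algebraic condition on $\lambda$, obtained via the standard connection formula for $\,_2F_1$ at $z=1$ in terms of ratios of Gamma functions.

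At this point the problem becomes purely a matter of checking when these Gamma-function conditions can be satisfied with $\operatorname{Re}\lambda>-1$. I expect to see, for each $\ell$, that the non-smoothness of the generic solution at $\rho=1$ is governed by poles of $\Gamma$-factors whose argument is an affine function of $\lambda$, and that cancellation of these poles forces $\lambda$ into a discrete sequence of real values lying in the left half-plane $\operatorname{Re}\lambda<-1$, except for the two symmetry-generated exceptions identified in \Cref{EV}: $\lambda=1$ at $\ell=0$ (coming from the time-translation mode), and $\lambda=0$ at $\ell=1$ (coming from the spatial-translation mode). These two exceptions are exactly the smooth eigenfunctions $\mathbf{f}_{1,0}$ and $\mathbf{f}_{0,0,i}$ in \Cref{EV}, and the Lorentz symmetry at $d=0$ contributes nothing new since it coincides with spatial translation at the linearized level.

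The main obstacle is the careful bookkeeping required to rule out eigenvalues for \emph{every} $\ell$ simultaneously while keeping track of the precise location of the exceptional zeros in terms of $p,N,\ell$. Rather than redoing this analysis, I would simply invoke \cite[Lemma 3.3]{ostermann2024stable}, which carries out exactly this hypergeometric computation in the superconformal regime and delivers the conclusion, with consistent earlier treatments for odd powers and specific dimensions in \cite{donninger2012stable,donninger2016blowup}.
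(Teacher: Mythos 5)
Your proposal is correct and takes essentially the same route as the paper: the paper offers no proof of its own for this lemma and simply cites \cite[Lemma 3.3]{ostermann2024stable} together with the earlier works \cite{donninger2012stable,donninger2016blowup}, which is exactly where you land. Your sketch of the spherical-harmonics reduction to hypergeometric ODEs and the Gamma-function connection coefficients accurately describes the content of that cited result, so the outline is a faithful (if optional) elaboration of the same argument.
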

	As a corollary, we obtain
	\begin{cor}\label{prop-mode-stability} For all $|d|<1$, $\kappa_d$ is mode stable, i.e. there exist no eigenvalues of $\tilde{\mathbf{L}}_{d}$ such that $\operatorname{Re} \lambda >-1$, expect $0$ and $1$.
	\end{cor}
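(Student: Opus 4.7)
The result is essentially a direct chaining of two already-established facts, so the plan is short. The strategy is to take an eigenvalue on one side, transport it via the Lorentz transformation to the other side, and then invoke the known mode stability of $\kappa_0$.

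Concretely, I would fix $d\in\BB^N$ and suppose $\lambda\in\CC$ is an eigenvalue of $\tilde{\mathbf{L}}_d$ in the sense of \Cref{defn-eigen} with $\Re\lambda > -1$, with associated smooth eigenfunction $\varphi\in C^{\infty}(\overline{\BB^N})$. Apply \Cref{cor-eigen}: the transformation \eqref{transform-eigen-eq} induced by $\mathcal{T}_d$ produces a function $\psi$ satisfying the eigenvalue equation \eqref{eigen-eq} for $d=0$ with the same $\lambda$. The one point to check is that $\psi$ is genuinely in $C^{\infty}(\overline{\BB^N})$, so that $\lambda$ qualifies as an eigenvalue of $\tilde{\mathbf{L}}_0$ in the sense of \Cref{defn-eigen}; this is where the hypothesis $|d|<1$ is essential, since the factor $\gamma(1-d\cdot y')$ appearing in \eqref{transform-eigen-eq} is then uniformly bounded away from $0$ on $\overline{\BB^N}$, and the argument $(y'-\gamma d + \tfrac{\gamma^2 d\cdot y'}{1+\gamma}d)/\gamma(1-d\cdot y')$ remains in $\overline{\BB^N}$, so smoothness (including at the boundary) is preserved.

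With $\psi\in C^{\infty}(\overline{\BB^N})$ a bona fide smooth eigenfunction of $\tilde{\mathbf{L}}_0$ at eigenvalue $\lambda$, I would then simply invoke \Cref{lem-mode-stability-0}: it forces $\lambda\in\{0,1\}$, which is the conclusion. There is no real obstacle, since all the substantive work has already been done: the Lorentz equivalence of discrete spectra packages the geometric content, while the explicit hypergeometric analysis of \cite{ostermann2024stable} supplies the mode stability of the ODE blow-up. One could also remark that the unstable eigenvalues $0$ and $1$ are actually attained, as witnessed by the explicit eigenfunctions $\mathbf{f}_{0,d,i}$ and $\mathbf{f}_{1,d}$ from \Cref{EV}, so the statement is sharp.
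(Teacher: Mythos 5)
Your proposal is correct and follows exactly the paper's route: transport the eigenfunction via the Lorentz-induced transformation \eqref{transform-eigen-eq} of \Cref{cor-eigen} and invoke \Cref{lem-mode-stability-0}. Your added check that $\psi$ remains in $C^{\infty}(\overline{\BB^N})$ because $1-d\cdot y'$ stays bounded away from zero is a worthwhile detail the paper leaves implicit.
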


%	\section{Applications: asymptotic stability of self-similar solutions}

%\appendix	
	
	\section{Spectral equivalence and linear stability}\label{sec:linear}
   In this section, we establish the equivalence of the full spectra of the linearized operators around each $\kappa_d$ and, consequently, derive the linear stability of the self-similar profile $\kappa_d$. This analysis builds upon the spectral strategy developed in \cite{ostermann2024stable} for the ODE blow-up profile $\kappa_0$. Recall the first-order linearized system:
   	\begin{align*}
   	\partial_s \begin{pmatrix}
   		q_1 \\ q_2
   	\end{pmatrix} & = \begin{pmatrix}
   		-y\cdot \nabla q_1 - \frac{2}{p-1}q_1 + q_2\\
   		\Delta q_1 + p|\kappa_d|^{p-1} q_1  -y\cdot \nabla q_2 - \frac{p+1}{p-1}q_2 
   	\end{pmatrix} =:  \tilde{\mathbf{L}}_d  \begin{pmatrix}
   		q_1 \\ q_2
   	\end{pmatrix}.
   \end{align*}

   \subsection{Functional setup}
   We introduce the following standard Hilbert space.
   \begin{defn}\label{SobolevSpaces}
   	Let $(N,k,R)\in\NN\times\NN\times\RR_{>0} $. We define a Hilbert space by
   	\begin{equation*}
   		\mathcal{H}^{k}(\BB^{N}_R) \coloneqq H^{k}(\BB^{N}_R) \times H^{k-1}(\BB^{N}_R) \,, \qquad
   		\| \mathbf{f} \|_{\mathcal{H}^{k}(\BB^{N}_R)} \coloneqq \| (f_{1},f_{2}) \|_{H^{k}(\BB^{N}_R) \times H^{k-1}(\BB^{N}_R)} \,.
   	\end{equation*}
   	We also define for $\delta>0$ a closed ball
   	\begin{equation*}
   		\mathcal{H}^{k}_{\delta}(\BB^{N}_R) \coloneqq \{ \mathbf{f} \in \mathcal{H}^{k}(\BB^{N}_R) \mid \| \mathbf{f} \|_{\mathcal{H}^{k}(\BB^{N}_R)} \leq \delta \} \,.
   	\end{equation*}
   \end{defn}
	Then, we define the free wave operator in terms of self-similar variables.
	\begin{defn}[Free wave operator]\label{WaveFlowOperation}
		Let $(N,p,k,R)\in\NN\times\RR_{>1}\times \NN\times\RR_{\ge 1}$. For $\mathbf{f}\in C^{\infty}(\overline{\BB^{N}_R})^{2}$ we define
		\begin{align*}
			\mathbf{L}_{N,p,R}\mathbf{f} \in C^{\infty}(\overline{\BB^{N}_R})^{2}
			\quad\text{by}\quad
			(\mathbf{L}_{N,p,R} \mathbf{f})(y) =
			 \begin{pmatrix}
				-y\cdot \nabla f_1 - \frac{2}{p-1}f_1 + f_2\\
				\Delta f_1  -y\cdot \nabla f_2 - \frac{p+1}{p-1}f_2 
			\end{pmatrix}.
		\end{align*}
		 The operator $\mathbf{L}_{N,p,k,R}: \dom(\mathbf{L}_{N,p,k,R}) \subset \mathcal{H}^{k}(\BB^{N}_R) \rightarrow \mathcal{H}^{k}(\BB^{N}_R)$ is densely defined by
		\begin{equation*}
			\mathbf{L}_{N,p,k,R}\mathbf{f} = \mathbf{L}_{N,p,R}\mathbf{f} \,,
			\qquad
			\dom(\mathbf{L}_{N,p,k,R}) = C^{\infty}(\overline{\BB^{N}_R})^{2} \,.
		\end{equation*}
	\end{defn}
	The closure of the free wave operator generates a strongly continuous semigroup, see \cite[Theorem 2.1]{ostermann2024stable} for the detailed proof.
	\begin{thm}\label{FreeSemigroupTHM}
		Let $(N,p,k,R)\in\NN\times\RR_{>1}\times \NN\times\RR_{\ge 1}$. The operator $\mathbf{L}_{N,p,k,R}$ is closable and its closure $\overline{\mathbf{L}_{N,p,k,R}}$ is the generator of a strongly continuous operator semigroup
		\begin{equation*}
			\mathbf{S}_{N,p,k,R}: \RR_{\geq 0} \rightarrow \mathcal{L} \left(  	\mathcal{H}^{k}(\BB^{N}_R) \right)
		\end{equation*}
		which satisfies that for any $0<\varepsilon<\frac{1}{2}$ there is a constant $M_{N,p,k,R,\varepsilon}\geq 1$ such that
		\begin{equation*}
			\| \mathbf{S}_{N,p,k,R}(s) \mathbf{f} \|_{	\mathcal{H}^{k}(\BB^{N}_R)} \leq M_{N,p,k,R,\varepsilon} e^{\omega_{N,p,k,R,\varepsilon} s} \| \mathbf{f} \|_{	\mathcal{H}^{k}(\BB^{N}_R)}
		\end{equation*}
		for all $\mathbf{f}\in 	\mathcal{H}^{k}(\BB^{N}_R)$ and all $s\geq 0$, where
		\begin{equation*}
			\omega_{N,p,k,\varepsilon} = -s_{p} + \max\Big\{ \frac{N}{2}-k,\varepsilon \Big\}
			\qquad\text{and}\qquad
			s_{p} = \frac{2}{p-1} \,.
		\end{equation*}
	\end{thm}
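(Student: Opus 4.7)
The plan is to apply the Lumer-Phillips generation theorem to the operator $\mathbf{L}_{N,p,k,R}$ on the Hilbert space $\mathcal{H}^{k}(\BB^{N}_R)$, reproducing the argument of \cite[Theorem 2.1]{ostermann2024stable}. The two ingredients I would establish are: (i) a dissipative estimate for $\mathbf{L}_{N,p,k,R} - \omega_{N,p,k,\varepsilon}\mathbf{I}$ in a norm equivalent to $\|\cdot\|_{\mathcal{H}^{k}(\BB^{N}_R)}$; and (ii) a range condition asserting that $\lambda - \mathbf{L}_{N,p,k,R}$ has dense range for some $\lambda$ with $\Re\lambda > \omega_{N,p,k,\varepsilon}$.

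For the dissipativity, I would evaluate $\Re\,\langle\mathbf{L}_{N,p,R}\mathbf{f},\mathbf{f}\rangle$ in the $\mathcal{H}^{k}$ inner product on smooth $\mathbf{f} \in C^{\infty}(\overline{\BB^{N}_R})^{2}$. The transport term $-y\cdot\nabla$ contributes, via the divergence identity
\begin{equation*}
2\,\Re\int_{\BB^{N}_R}(y\cdot\nabla g)\bar{g}\,dy = -N\int_{\BB^{N}_R}|g|^{2}\,dy + R\int_{\partial\BB^{N}_R}|g|^{2}\,d\sigma ,
\end{equation*}
a factor $N/2$ together with an outflow boundary integral on $\partial\BB^{N}_R$, which is nonpositive after a suitable choice of equivalent inner product (admissible since $R \ge 1$ places $\partial\BB^{N}_R$ at or outside the backward light cone, making it outflow-characteristic). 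The zero-order coefficients $\tfrac{2}{p-1} = s_{p}$ and $\tfrac{p+1}{p-1} = s_{p}+1$ produce the $-s_{p}$ part of the growth exponent, and the commutator $[\partial^{\alpha},y\cdot\nabla] = |\alpha|\,\partial^{\alpha}$ applied at orders $|\alpha| \le k$ yields an additional $-k$, landing on the principal constant $N/2 - k - s_{p}$ at the top derivative order (the same constant emerges for the $f_{2}$-component, where one works at order $k-1$ but the damping is $s_p+1$ instead of $s_p$). The coupling between the two components through the $\mathcal{H}^{k}$ inner product generates off-diagonal cross terms of mixed sign, which I would absorb via Cauchy-Schwarz with a parameter $\varepsilon > 0$; this absorption is the source of the $\max\{\cdot,\varepsilon\}$ appearing in the stated growth rate.

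For the range condition, I would solve the resolvent equation $(\lambda - \mathbf{L}_{N,p,R})\mathbf{f} = \mathbf{g}$ for smooth right-hand sides. Eliminating $f_{2} = (\lambda + y\cdot\nabla + s_{p})f_{1} - g_{1}$ reduces the system to a second-order equation for $f_{1}$ whose principal symbol is $(\delta_{ij} - y_{i}y_{j})\xi_{i}\xi_{j}$. This operator is uniformly elliptic on $\overline{\BB^{N}_R}$ for $R > 1$, and tangentially degenerate on $|y| = 1$ in the case $R = 1$; either way, smooth solutions are produced by standard elliptic theory (or, for $R=1$, by invoking the classical wave propagator and transporting back to self-similar variables). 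Dissipativity combined with dense range gives closability of $\mathbf{L}_{N,p,k,R}$ and, by Lumer-Phillips, the stated semigroup generation with the exponential bound. The principal obstacle is the first step: tracking boundary and cross-term contributions precisely enough to land on the sharp constant $\omega_{N,p,k,\varepsilon} = -s_{p} + \max\{N/2 - k, \varepsilon\}$ rather than a cruder bound. This analysis is algebraic at each derivative order but requires careful bookkeeping of all commutators, boundary integrals, and off-diagonal couplings; it is carried out in full in \cite[Theorem 2.1]{ostermann2024stable}, and the same argument applies verbatim here.
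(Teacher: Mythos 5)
The paper itself does not prove this theorem; it is quoted verbatim from Ostermann and the proof is delegated to \cite[Theorem 2.1]{ostermann2024stable}, so your overall strategy (Lumer--Phillips: dissipativity in an equivalent inner product plus a range condition) is the right one and matches the cited source. Your dissipativity sketch is essentially sound: the divergence identity, the commutator $[\partial^{\alpha},y\cdot\nabla]=|\alpha|\partial^{\alpha}$, and the bookkeeping that places $\tfrac{N}{2}-k-s_{p}$ at top order in both components (order $k$ with damping $s_p$ for $f_1$, order $k-1$ with damping $s_p+1$ for $f_2$) are all consistent with the stated growth rate.

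There is, however, a genuine error in your range-condition step. After eliminating $f_{2}$, the principal part of the reduced operator is $(\delta_{ij}-y_{i}y_{j})\partial_{i}\partial_{j}$, whose symbol is $|\xi|^{2}-(y\cdot\xi)^{2}$. This quadratic form is positive definite precisely for $|y|<1$, degenerates in the \emph{radial} direction at $|y|=1$, and is \emph{indefinite} (of mixed hyperbolic type) for $|y|>1$: taking $\xi$ parallel to $y$ gives $|\xi|^{2}(1-|y|^{2})<0$ while $\xi\perp y$ gives $|\xi|^{2}>0$. So your claim that the operator is ``uniformly elliptic on $\overline{\BB^{N}_R}$ for $R>1$'' has the geometry exactly backwards, and ``standard elliptic theory'' cannot produce the resolvent on the annulus $1<|y|\le R$. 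Since the theorem is stated and used for $R\ge 1$ with $R$ possibly strictly larger than $1$ (the application takes any $R<|d_0|^{-1}$), this is not a removable edge case: the density of $\ran(\lambda-\mathbf{L}_{N,p,k,R})$ for $R>1$ must be obtained by a non-elliptic argument — e.g., by solving the free wave equation explicitly in physical coordinates on a truncated backward light cone and transporting the solution to self-similar variables, which is how the cited proof proceeds. Your parenthetical fallback ``invoking the classical wave propagator and transporting back'' is in fact the mechanism needed for \emph{all} $R\ge 1$, not just $R=1$; as written, the proposal's main route for $R>1$ fails.
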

	
	Then, we introduce the potential operator coming from the linearization around $\kappa_d$. 
	\begin{defn}
		\label{Potential}
		Let $d\in \BB^N$ and $(N,p,k,R)\in \NN\times\RR_{>1}\times\NN\times\RR_{\ge 1}$ with $R<|d|^{-1}$. Let $V_{d}\in C^{\infty}(\overline{\BB^{N}_R})$ be given by
		\begin{equation*}
			V_{d}(y) = p|\kappa_d|^{p-1}= (s_{p}+1) (s_{p}+2) \left( 1-|d|^2 \right) \left( 1 + d\cdot y \right)^{-2} \,.
		\end{equation*}
		We define the bounded linear operator
		\begin{equation*}
			\mathbf{L}_{d}' \in \mathcal{L} \left( \mathcal{H}^{k}(\BB^{N}_R) \right)
			\qquad\text{by}\qquad
			\mathbf{L}_{d}'\mathbf{f} =
			\begin{bmatrix}
				0 \\
				V_{d} f_{1}
			\end{bmatrix}
			\,.
		\end{equation*}
	\end{defn}
	Now, the full linearized operator is composed of a perturbation of the free wave operator.
	\begin{defn}
		\label{LinearWaveEvolutionOperator}
		Let $d\in \BB^N$ and $(N,p,k,R)\in \NN\times\RR_{>1}\times\NN\times\RR_{\ge 1}$ with $R<|d|^{-1}$. The closed linear operator $\mathbf{L}_{d}: \dom(\mathbf{L}_{d})\subset\mathcal{H}^{k}(\BB^{N}_R)\rightarrow\mathcal{H}^{k}(\BB^{N}_R)$ is densely defined by
		\begin{equation*}
			\mathbf{L}_{d} = \overline{ \mathbf{L}_{N,p,k} } + \mathbf{L}_{d}' \,, \qquad \dom(\mathbf{L}_{d}) = \dom(\overline{\mathbf{L}_{N,p,k}}) \,.
		\end{equation*}
	\end{defn}
	Perturbation theory for semigroups yields that this operator is indeed the generator of the linearized wave flow.
	\begin{prop}[{\cite[Proposition 3.1]{ostermann2024stable}}]
		\label{SGbeta}
		Let $d\in \BB^N$ and $(N,p,k,R)\in \NN\times\RR_{>1}\times\NN\times\RR_{\ge 1}$ with $R<|d|^{-1}$. The closed linear operator $\mathbf{L}_{d} :\dom(\mathbf{L}_{d}) \subset \mathcal{H}^{k}(\BB^{N}_R) \rightarrow \mathcal{H}^{k}(\BB^{N}_R)$ is the generator of a strongly continuous operator semigroup
		\begin{equation*}
			\mathbf{S}_{d}: \RR_{\geq 0} \rightarrow \mathcal{L}\left(\mathcal{H}^{k}(\BB^{N}_R)\right) \,.
		\end{equation*}
		Moreover, let $0<\varepsilon<\frac{1}{2}$ and $M_{N,p,k,\varepsilon}\geq 1$, $\omega_{N,p,k,\varepsilon}\in\RR$ be the constants from \Cref{FreeSemigroupTHM}. Then the bound
		\begin{equation*}
			\| \mathbf{S}_{d}(s) \|_{\mathcal{L}\left(\mathcal{H}^{k}(\BB^{N}_R)\right)} \leq M_{N,p,k,\varepsilon} \ee^{ \Big( \omega_{d,p,k,\varepsilon} + M_{N,p,k,\varepsilon} \| \mathbf{L}_{d}' \|_{\mathcal{L}\left(\mathcal{H}^{k}(\BB^{N}_R)\right)} \Big) s}
		\end{equation*}
		holds for all $s\geq 0$ and all $|d|<1$.
	\end{prop}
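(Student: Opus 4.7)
The plan is to invoke the bounded perturbation theorem for strongly continuous semigroups, applied to the decomposition $\mathbf{L}_{d} = \overline{\mathbf{L}_{N,p,k}} + \mathbf{L}_{d}'$ from \Cref{LinearWaveEvolutionOperator}. Since the free wave part has already been shown in \Cref{FreeSemigroupTHM} to generate a $C_0$-semigroup with explicit exponential bound, the content of the proposition reduces to a standard perturbation-theoretic statement once $\mathbf{L}_{d}'$ is verified to be bounded on $\mathcal{H}^{k}(\BB^{N}_R)$.

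First, I would verify boundedness of $\mathbf{L}_{d}'$. The potential $V_{d}(y) = (s_{p}+1)(s_{p}+2)(1-|d|^{2})(1+d\cdot y)^{-2}$ is smooth on $\overline{\BB^{N}_R}$ because the standing hypothesis $R<|d|^{-1}$ gives $|1+d\cdot y|\ge 1-|d|R>0$ uniformly for $y\in\overline{\BB^{N}_R}$; hence $V_{d}\in C^{\infty}(\overline{\BB^{N}_R})\subset W^{k-1,\infty}(\BB^{N}_R)$. Multiplication by a function in $W^{k-1,\infty}(\BB^{N}_R)$ is a bounded endomorphism of $H^{k-1}(\BB^{N}_R)$ by the Leibniz rule, so the map $\mathbf{f}\mapsto(0,V_{d}f_{1})$ sends $\mathcal{H}^{k}(\BB^{N}_R)=H^{k}\times H^{k-1}$ continuously into $\{0\}\times H^{k-1}\subset\mathcal{H}^{k}(\BB^{N}_R)$, with operator norm controlled by the $W^{k-1,\infty}$-norm of $V_{d}$ (which in turn depends continuously on $d$ with the given constraints).

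Second, with $A\coloneqq\overline{\mathbf{L}_{N,p,k}}$ and $B\coloneqq\mathbf{L}_{d}'$, I would apply the Phillips bounded perturbation theorem for $C_{0}$-semigroups (see, e.g., the standard textbook treatment in \cite{engel2000one}): if $A$ generates a $C_{0}$-semigroup $T(s)$ on a Banach space with $\|T(s)\|\le M e^{\omega s}$, and $B$ is bounded, then $A+B$ with domain $\dom(A)$ is the generator of a $C_{0}$-semigroup $S(s)$ with
\begin{equation*}
\|S(s)\|\le M\, e^{(\omega+M\|B\|)s}.
\end{equation*}
Taking $M=M_{N,p,k,\varepsilon}$ and $\omega=\omega_{N,p,k,\varepsilon}$ from \Cref{FreeSemigroupTHM} produces a semigroup $\mathbf{S}_{d}(s)$ generated by $\overline{\mathbf{L}_{N,p,k}}+\mathbf{L}_{d}'=\mathbf{L}_{d}$ with domain $\dom(\overline{\mathbf{L}_{N,p,k}})=\dom(\mathbf{L}_{d})$, satisfying exactly the stated growth bound.

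No real obstacle is anticipated: the argument is purely an application of black-box semigroup theory, and the only genuine check is the smoothness of $V_{d}$ on the closed ball, which the geometric condition $R<|d|^{-1}$ is designed precisely to secure. The one mild subtlety worth flagging is that $\mathbf{L}_{d}'$ must respect the product structure $\mathcal{H}^{k}=H^{k}\times H^{k-1}$; since $\mathbf{L}_{d}'\mathbf{f}$ has zero first component and its second component $V_{d}f_{1}$ lies in $H^{k-1}$ whenever $f_{1}\in H^{k}\subset H^{k-1}$, this causes no issue. Everything else is verbatim perturbation theory.
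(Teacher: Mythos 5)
Your proposal is correct and follows the same route as the paper, which simply invokes bounded perturbation theory for $C_{0}$-semigroups (citing \cite[Proposition 3.1]{ostermann2024stable}): the only substantive check is that $V_{d}$ is smooth on $\overline{\BB^{N}_R}$ thanks to $R<|d|^{-1}$, so that $\mathbf{L}_{d}'$ is bounded and the Phillips perturbation theorem applies on top of \Cref{FreeSemigroupTHM}. Nothing further is needed.
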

	
	\subsection{Spectral analysis of the linearized operator}

	The spectrum of the linearized operator $\mathbf{L}_0$ has already been known in \cite[Lemma 3.3]{ostermann2024stable}
	\begin{lem}
		\label{SpecL0}
		Let $(N,p,k,R)\in \NN \times\RR_{>1}\times\NN\times\RR_{\ge 1}$ such that $\frac{N}{2}-s_{p}-k < 0$. Put
		\begin{equation*}
			\omega_{N,p,k} \coloneqq \max\left\{-1,\frac{N}{2}-s_{p}-k,-s_{p} \right\}
		\end{equation*}
		and let $\omega_{N,p,k}<\omega_{0}<0$ be arbitrary but fixed. We have
		\begin{equation*}
			\sigma(\mathbf{L}_{0}) \cap \overline{\mathbb{H}_{\omega_{0}}} = \{0,1\} \,.
		\end{equation*}
		Both points $0,1\in\sigma(\mathbf{L}_{0})$ belong to the point spectrum of $\mathbf{L}_{0}$ with eigenspaces
		\begin{equation*}
			\ker(\mathbf{L}_{0}) = \langle \mathbf{f}_{0,0,i} \rangle_{i=1}^{N}
			\qquad\text{and}\qquad
			\ker(\mathbf{I}-\mathbf{L}_{0}) = \langle \mathbf{f}_{1,0} \rangle
		\end{equation*}
		spanned by the symmetry modes $\mathbf{f}_{0,0,i},\mathbf{f}_{1,0}\in C^{\infty}(\overline{\BB^{N}_R})^{2}$ given by
		\begin{equation*}
			\mathbf{f}_{0,0,i}(y) =
			\begin{bmatrix}
				s_{p}\kappa_0y_{i} \\
				s_{p}(s_{p}+1)\kappa_0 y_{i}
			\end{bmatrix}
			\qquad\text{and}\qquad
			\mathbf{f}_{1,0}(y) =
			\begin{bmatrix}
				s_{p}\kappa_0 \\
				s_{p}(s_{p}+1)\kappa_0
			\end{bmatrix}
			\,,\qquad
			i=1,\ldots,N.
		\end{equation*}
	\end{lem}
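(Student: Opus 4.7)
The plan is to combine three ingredients: (i) the growth/spectral bound of the free semigroup from \Cref{FreeSemigroupTHM}, (ii) compactness of the potential perturbation $\mathbf{L}_{0}'$ on $\mathcal{H}^{k}(\BB^{N}_R)$, and (iii) the mode stability result \Cref{lem-mode-stability-0}. The idea is that these three together force the spectrum in the half-plane $\overline{\mathbb{H}_{\omega_{0}}}$ to be purely discrete, and then mode stability pins down exactly which points survive.

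First I would show that the essential spectrum satisfies $\sigma_{\mathrm{ess}}\bigl(\overline{\mathbf{L}_{N,p,k,R}}\bigr) \subset \overline{\mathbb{H}_{\omega_{N,p,k}}^{c}}$, i.e.\ lies strictly to the left of $\omega_0$. From \Cref{FreeSemigroupTHM}, the growth bound of $\mathbf{S}_{N,p,k,R}$ gives the spectral bound $s\bigl(\overline{\mathbf{L}_{N,p,k,R}}\bigr) \leq \omega_{N,p,k,\varepsilon}$ for every small $\varepsilon>0$, and sending $\varepsilon \to 0$ pushes the bound down to $\omega_{N,p,k}$. Next, since $V_{d}\in C^\infty(\overline{\BB^N_R})$ is bounded, multiplication by $V_d$ maps $H^{k}(\BB^N_R)$ into itself, and by the Rellich--Kondrachov embedding $H^{k}\hookrightarrow H^{k-1}$ compactly on the bounded domain $\BB^N_R$, the operator $\mathbf{L}_{0}'$ is compact as a map $\mathcal{H}^{k}(\BB^N_R) \to \mathcal{H}^{k}(\BB^N_R)$. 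Weyl's theorem then yields $\sigma_{\mathrm{ess}}(\mathbf{L}_0) = \sigma_{\mathrm{ess}}\bigl(\overline{\mathbf{L}_{N,p,k,R}}\bigr)$, so every $z\in\sigma(\mathbf{L}_0)\cap \overline{\mathbb{H}_{\omega_0}}$ must be an isolated eigenvalue of finite algebraic multiplicity.

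Second, I would invoke mode stability. If $\lambda\in\sigma_p(\mathbf{L}_0)\cap\overline{\mathbb{H}_{\omega_0}}$, then there exists a nonzero $\mathbf{q}=(q_1,q_2)\in\dom(\mathbf{L}_0)$ with $\mathbf{L}_0\mathbf{q}=\lambda \mathbf{q}$. One would then need a regularity bootstrap: using elliptic regularity for the eigenvalue equation \eqref{eigen-eq} (with $d=0$) together with the $H^{k}$ membership of $q_1$, iterate to conclude that $q_1\in C^\infty(\overline{\BB^N_R})$, extended by analyticity to all of $\overline{\BB^N}$. Then $\lambda$ is an eigenvalue in the sense of \Cref{defn-eigen}, so by \Cref{lem-mode-stability-0} one has $\lambda \in \{0,1\}$ whenever $\operatorname{Re}\lambda > -1$, which covers the region $\overline{\mathbb{H}_{\omega_0}}$ since $\omega_0 > \omega_{N,p,k} \geq -1$. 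This gives $\sigma(\mathbf{L}_0)\cap\overline{\mathbb{H}_{\omega_0}} \subset \{0,1\}$.

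Finally, I would produce the eigenfunctions explicitly. \Cref{EV} (taking $d=0$) already exhibits $\mathbf{f}_{0,0,i}\in\ker(\mathbf{L}_0)$ for $i=1,\ldots,N$ and $\mathbf{f}_{1,0}\in\ker(\mathbf{I}-\mathbf{L}_0)$; these are manifestly smooth and linearly independent, establishing $\{0,1\}\subset\sigma_p(\mathbf{L}_0)$ and giving the claimed lower bound on the eigenspaces. To upgrade to equality, I would decompose any smooth solution $\varphi$ of \eqref{eigen-eq} (with $d=0$) in spherical harmonics $\varphi(y)=\sum_{\ell\geq 0}\sum_{m}\varphi_{\ell,m}(|y|)Y_{\ell,m}(y/|y|)$; each radial profile satisfies a hypergeometric-type ODE on $(0,1)$ with regular singularities at $0$ and $1$. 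Demanding smoothness at both endpoints forces Frobenius exponents to line up, which for $\lambda=0$ selects exactly the $\ell=1$ angular modes (yielding the $N$-dimensional span $\langle \mathbf{f}_{0,0,i}\rangle$) and for $\lambda=1$ selects only the $\ell=0$ mode (yielding $\langle \mathbf{f}_{1,0}\rangle$). The main obstacle is the last step: verifying that no other $(\ell,\lambda)$ combination produces a smooth solution, which is precisely the content of the mode stability argument \Cref{lem-mode-stability-0} applied with sharp bookkeeping of the admissible angular modes at each eigenvalue; the compactness of $\mathbf{L}_0'$ and the semigroup bound are comparatively routine.
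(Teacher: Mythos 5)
The paper does not actually prove this lemma: it is imported verbatim from \cite[Lemma 3.3]{ostermann2024stable} (``The spectrum of the linearized operator $\mathbf{L}_0$ has already been known in\dots''), so there is no in-paper argument to compare against. Your reconstruction follows the same strategy as that reference --- the growth bound of \Cref{FreeSemigroupTHM} to locate $\sigma(\overline{\mathbf{L}_{N,p,k,R}})$ in a left half-plane, compactness of $\mathbf{L}_0'$ plus Weyl/analytic Fredholm theory to reduce $\sigma(\mathbf{L}_0)\cap\overline{\mathbb{H}_{\omega_0}}$ to isolated eigenvalues of finite multiplicity, mode stability (\Cref{lem-mode-stability-0}) to confine them to $\{0,1\}$, and the spherical-harmonics/hypergeometric ODE analysis to pin down the eigenspaces --- so the outline is sound and consistent with how the result is actually established.

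One step as you wrote it would not go through literally: you invoke ``elliptic regularity'' to upgrade an eigenfunction $\mathbf{q}\in\dom(\mathbf{L}_0)\subset\mathcal{H}^{k}(\BB^{N}_R)$ to $q_1\in C^{\infty}(\overline{\BB^{N}_R})$. The principal part $(y_iy_j-\delta_{ij})\partial_i\partial_j$ of \eqref{eigen-eq} is elliptic only for $|y|<1$; it degenerates on the unit sphere and loses ellipticity for $|y|>1$, so interior elliptic regularity yields smoothness only on the open unit ball and says nothing at or across $|y|=1$ --- which is precisely where the claim is delicate, and where $R\ge 1$ matters. The smoothness there is obtained not by elliptic theory but by the spherical-harmonics/Frobenius analysis you describe afterwards: $|y|=1$ is a regular singular point of the radial ODEs, and $H^{k}$ membership with $\frac{N}{2}-s_p-k<0$ forces the analytic Frobenius branch. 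Note also that the direction of your extension is reversed: since $R\ge 1$ one must get from smoothness on $\overline{\BB^{N}}$ to smoothness on the larger ball $\overline{\BB^{N}_R}$, not the other way around. Neither issue is fatal --- both are resolved by the ODE analysis you already invoke for the eigenspace count --- but as written the ``regularity bootstrap'' sentence is the one place where the argument is not self-supporting.
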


	Now, we establish the following spectral equivalence of linearized operators. 
	\begin{prop}[Spectral equivalence]
		\label{SpectralEquivalence}
		Let $d\in \BB^N$ and $(N,p,k,R)\in \NN\times\RR_{>1}\times\NN\times\RR_{\ge 1}$ such that $\frac{N}{2}-s_{p}-k < 0$ and $R<|d|^{-1}$, we have $ \sigma(\mathbf{L}_{d}) =   \sigma(\mathbf{L}_{0}) \,.$	
	\end{prop}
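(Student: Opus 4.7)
The plan is to combine the formal eigenvalue correspondence already established in \Cref{cor-eigen} with a compactness argument for the essential spectrum. The two operators $\mathbf{L}_d$ and $\mathbf{L}_0$ share the same unperturbed part $\overline{\mathbf{L}_{N,p,k,R}}$ and differ only by the bounded potential term $\mathbf{L}_d' - \mathbf{L}_0'$ from \Cref{Potential}, which acts as multiplication by the smooth function $V_d - V_0 \in C^\infty(\overline{\BB^N_R})$ in the second component. Since $R<|d|^{-1}$, this potential is genuinely smooth up to the boundary, and the associated multiplication factors through the Rellich--Kondrachov compact embedding $H^k(\BB^N_R) \hookrightarrow H^{k-1}(\BB^N_R)$. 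Hence $\mathbf{L}_d - \mathbf{L}_0$ is compact on $\mathcal{H}^k(\BB^N_R)$, and Weyl's theorem on stability of the essential spectrum under compact perturbation yields $\sigma_{\mathrm{ess}}(\mathbf{L}_d) = \sigma_{\mathrm{ess}}(\mathbf{L}_0)$.

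Outside the essential spectrum the spectrum of each operator consists of isolated eigenvalues of finite algebraic multiplicity. Given $\lambda \in \sigma(\mathbf{L}_d)\setminus\sigma_{\mathrm{ess}}(\mathbf{L}_d)$ and an eigenvector $\mathbf{f} \in \dom(\mathbf{L}_d)$, the first component $\varphi = f_1$ satisfies the stationary equation \eqref{eigen-eq} in the weak sense. Standard interior elliptic regularity — the PDE is strictly elliptic with smooth coefficients on the open ball $\BB^N$ — yields $\varphi \in C^\infty(\BB^N)$, and a boundary regularity argument at the characteristic surface $|y|=1$, paralleling the analysis behind \cite[Lemma~3.3]{ostermann2024stable}, upgrades this to $\varphi \in C^\infty(\overline{\BB^N})$. \Cref{cor-eigen} then furnishes a smooth eigenfunction $\psi$ of $\tilde{\mathbf{L}}_0$ at the same $\lambda$ via the explicit change of variables \eqref{transform-eigen-eq}; because the Jacobian factors appearing there are smooth and non-vanishing on all of $\overline{\BB^N_R}$ under the hypothesis $R<|d|^{-1}$, the transformed function belongs to $\mathcal{H}^k(\BB^N_R)$, so $\lambda \in \sigma_p(\mathbf{L}_0)$. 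The symmetric argument with $\mathcal{T}_{-d}$ in place of $\mathcal{T}_d$ yields the reverse inclusion, and combining with the essential-spectrum equality gives $\sigma(\mathbf{L}_d) = \sigma(\mathbf{L}_0)$.

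The main obstacle I anticipate is the boundary regularity step: the second-order PDE \eqref{eigen-eq} degenerates on $|y|=1$, so standard elliptic regularity does not cross the characteristic boundary directly, and one must exploit the indicial/Frobenius structure of the equation at the light cone — or equivalently the hypergeometric-ODE machinery from \cite{ostermann2024stable} — to promote a Sobolev eigenfunction to a smooth one up to $|y|=1$. A secondary concern, which the assumption $R<|d|^{-1}$ precisely accommodates, is ensuring that the Lorentz pullback of smooth eigenfunctions on $\overline{\BB^N}$ lies in the Hilbert space $\mathcal{H}^k(\BB^N_R)$ on the possibly larger ball, so that the smooth-eigenfunction correspondence from \Cref{cor-eigen} actually upgrades to an equivalence of Hilbert-space eigenvectors.
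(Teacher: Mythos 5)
Your proposal follows essentially the same route as the paper: the essential spectra agree because $\mathbf{L}_{d}-\mathbf{L}_{0}=\mathbf{L}_{d}'-\mathbf{L}_{0}'$ is compact (Rellich--Kondrachov, then Kato's stability theorem for the essential spectrum), and the point spectra agree via the Lorentz-transformation correspondence of \Cref{cor-eigen} together with the observation that the map \eqref{transform-eigen-eq} preserves $H^{k}$. You are in fact more explicit than the paper about the regularity step needed to pass from a Hilbert-space eigenvector of $\mathbf{L}_{d}$ to a $C^{\infty}(\overline{\BB^{N}})$ eigenfunction to which \Cref{cor-eigen} applies; the paper simply invokes the corollary and the boundedness of the transformation without addressing this point.
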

	\begin{proof}
		By definition, $\mathbf{L}_{d} =\mathbf{L}_{0} -  \mathbf{L}_{0}' + \mathbf{L}_{d}'$, where the potential operators $\mathbf{L}_{0}'$ and $\mathbf{L}_{d}'$ are compact, see \cite[Lemma 3.1]{ostermann2024stable}. Thus, by \cite[p. 244, Theorem 5.35 ]{kato1995} the essential spectrum of $\mathbf{L}_{0}$ and $\mathbf{L}_{d}$ are the same. Besides,  by \Cref{cor-eigen} and \Cref{transform-eigen-eq} (note that the transformation in \Cref{transform-eigen-eq} maps $H^k(\BB^N)$ to $H^k(\BB^N)$ itself), we have $\sigma_p( \mathbf{L}_{d}) = \sigma_p( \mathbf{L}_{0})$. Therefore, $ \sigma(\mathbf{L}_{d}) =   \sigma(\mathbf{L}_{0}) \,.$	
	\end{proof}
	
	As a corollary, we obtain the spectrum of $\mathbf{L}_{d}$, which extends \cite[Theorem 3.1]{ostermann2024stable} for $\mathbf{L}_{d}$ with $|d|\ll 1$ to the whole range $|d|<1$. 
	\begin{thm}
		\label{SpecThmLbeta}
		Let $d\in \BB^N$ and $(N,p,k,R)\in \NN\times\RR_{>1}\times\NN\times\RR_{\ge 1}$ such that $\frac{N}{2}-s_{p}-k < 0$ and $R<|d|^{-1}$. Put
		\begin{equation*}
			\omega_{N,p,k} \coloneqq \max\left\{-1,\frac{N}{2}-s_{p}-k,-s_{p} \right\}
		\end{equation*}
		and let $\omega_{N,p,k}<\omega_{0}<0$ be arbitrary but fixed. Then,
		\begin{equation*}
			\sigma(\mathbf{L}_{d}) \cap \overline{\mathbb{H}_{\omega_{0}}} = \{0,1\} \,.
		\end{equation*}
		Moreover, both points $0,1\in\sigma(\mathbf{L}_{d})$ belong to the point spectrum of $\mathbf{L}_{d}$ with eigenspaces
		\begin{equation*}
			\ker(\mathbf{L}_{d}) = \langle \mathbf{f}_{0,d,i} \rangle_{i=1}^{d}
			\qquad\text{and}\qquad
			\ker(\mathbf{I}-\mathbf{L}_{d}) = \langle \mathbf{f}_{1,d} \rangle
		\end{equation*}
		spanned by the symmetry modes $\mathbf{f}_{0,d,i},\mathbf{f}_{1,d}\in C^{\infty}(\overline{\BB^{N}_R})^{2}$ from \Cref{EV}. Furthermore, the algebraic multiplicity of both eigenvalues $\lambda\in\{0,1\}$ is finite and equal to the respective geometric multiplicity. In particular,
		\begin{equation*}
			\ran(\mathbf{P}_{\lambda,d}) = \ker(\lambda\mathbf{I}-\mathbf{L}_{d})
		\end{equation*}
		where $\mathbf{P}_{\lambda,d} \in \mathcal{L}\left( \mathcal{H}^{k}(\BB^{N}_R)\right)$, defined by
		\begin{equation*}
			\mathbf{P}_{\lambda,d} \coloneqq \frac{1}{2\pi\ii}\int_{\pd\mathbb{D}_{r}(\lambda)} \mathbf{R}_{\mathbf{L}_{d}}(z) \dd z \,,
		\end{equation*}
		is the \emph{Riesz projection} associated to the isolated eigenvalue $\lambda\in\{0,1\}$ of $\mathbf{L}_{d}$, respectively.
	\end{thm}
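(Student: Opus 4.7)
The plan is to assemble the theorem from \Cref{SpectralEquivalence}, \Cref{EV}, and \Cref{SpecL0}, using the Lorentz intertwining of \Cref{sec:mode} to upgrade the eigenspace statement to one about algebraic multiplicities. The localization of the spectrum is immediate: \Cref{SpectralEquivalence} gives $\sigma(\mathbf{L}_d) = \sigma(\mathbf{L}_0)$ and \Cref{SpecL0} then yields $\sigma(\mathbf{L}_d) \cap \overline{\mathbb{H}_{\omega_0}} = \{0,1\}$. Because both points are isolated in $\sigma(\mathbf{L}_d)$ and lie strictly above the essential spectrum (which is unaffected by the compact perturbation in \Cref{SpectralEquivalence}), they belong to the point spectrum with finite-rank Riesz projections $\mathbf{P}_{\lambda,d}$.

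Second, \Cref{EV} directly verifies the inclusions $\mathbf{f}_{0,d,i} \in \ker(\mathbf{L}_d)$ and $\mathbf{f}_{1,d} \in \ker(\mathbf{I}-\mathbf{L}_d)$, and the $\mathbf{f}_{0,d,i}$ are linearly independent by inspection of their mixed $y_i, d_i$ dependence. To see these modes exhaust the kernels, I would invoke the Lorentz change of variables \eqref{transform-eigen-eq}, which is a linear bijection of $C^{\infty}(\overline{\BB^{N}_R})$ and of $H^k(\BB^N_R)$ that intertwines $\tilde{\mathbf{L}}_d - \lambda$ with $\tilde{\mathbf{L}}_0 - \lambda$ at fixed $\lambda$. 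This forces $\dim\ker(\mathbf{L}_d - \lambda) = \dim\ker(\mathbf{L}_0 - \lambda)$ for $\lambda\in\{0,1\}$, matching the geometric multiplicities in \Cref{SpecL0}.

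The main obstacle is the equality of algebraic and geometric multiplicity. Since $\ran(\mathbf{P}_{\lambda,d})$ is finite-dimensional and $\mathbf{L}_d$-invariant, and $\mathbf{L}_d - \lambda$ acts nilpotently on it, the claim reduces to ruling out nontrivial Jordan chains. The strategy is a contradiction argument via the Lorentz transform: if $\mathbf{q}\in\mathcal{H}^k(\BB^N_R)$ satisfied $(\mathbf{L}_d - \lambda)^{n+1}\mathbf{q} = 0$ and $(\mathbf{L}_d - \lambda)^n\mathbf{q} \neq 0$ for some $n\geq 1$, then
\begin{equation*}
\mathbf{u}(s) = e^{\lambda s} \sum_{j=0}^{n} \frac{s^{j}}{j!} (\mathbf{L}_d - \lambda)^{j}\mathbf{q}
\end{equation*}
would solve the linearized system at $\kappa_d$. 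Applying $\mathcal{T}_d$ at the level of the linearized equation (\Cref{prop-transform-linear-op}) and using $s = s' - \log(1 - d\cdot y')$ expresses the image as $e^{\lambda s'} P(s', y')$ with $P$ a polynomial of degree $n$ in $s'$ whose coefficients lie in $H^k(\BB^N_R)$, giving a nontrivial order-$(n+1)$ Jordan chain for $\mathbf{L}_0$ at $\lambda$. This contradicts the alg $=$ geom property established for $\mathbf{L}_0$ at $\lambda\in\{0,1\}$ in \cite[Theorem 3.1]{ostermann2024stable}. Hence $(\mathbf{L}_d - \lambda)\big|_{\ran(\mathbf{P}_{\lambda,d})} = 0$, so $\ran(\mathbf{P}_{\lambda,d}) = \ker(\lambda - \mathbf{L}_d)$.

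The delicate points in the last step are verifying that the leading coefficient $(\mathbf{L}_0 - \lambda)^n \mathcal{T}_d\mathbf{q}$ does not degenerate under the change of variables, and that the polynomial expansion in $s'$ truly produces $\mathcal{H}^k$-valued coefficients despite the logarithmic factor in $s - s'$. An alternative that bypasses the Jordan-chain bookkeeping would be to compute $\mathbf{P}_{\lambda,d}$ directly from the second resolvent identity $\mathbf{R}_{\mathbf{L}_d}(z) = \mathbf{R}_{\mathbf{L}_0}(z)\bigl[\mathbf{I} + (\mathbf{L}_d' - \mathbf{L}_0')\mathbf{R}_{\mathbf{L}_d}(z)\bigr]^{-1}$ and extract the residue at $\lambda$, but the Lorentz approach minimizes new analysis by reducing everything to the already-established $d=0$ case.
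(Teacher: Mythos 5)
Your proposal is correct and follows the same architecture as the paper: \Cref{SpectralEquivalence} plus \Cref{SpecL0} for the localization $\sigma(\mathbf{L}_d)\cap\overline{\mathbb{H}_{\omega_0}}=\{0,1\}$, and \Cref{EV} together with the intertwining \eqref{transform-eigen-eq} for the identification of the kernels. The one place where you genuinely diverge is the semisimplicity of $0$ and $1$: the paper disposes of this in a single sentence by asserting that "the properties of the projection follow from the proof of [Ostermann, Theorem 3.1]," whereas you supply an explicit argument, transporting a hypothetical Jordan chain $\mathbf{u}(s)=e^{\lambda s}\sum_{j\le n}\frac{s^j}{j!}(\mathbf{L}_d-\lambda)^j\mathbf{q}$ through $\mathcal{T}_d$ and reading off a length-$(n+1)$ chain for $\mathbf{L}_0$ from the resulting $e^{\lambda s'}\times(\text{degree-}n\text{ polynomial in }s')$ solution. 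Your two flagged "delicate points" both check out: the leading coefficient is $(1-d\cdot y')^{-\lambda}$ times the weight times $\varphi_n\circ y(y')$ and cannot vanish since the transformation is invertible and an eigentuple with vanishing first component is zero (by the remark after \Cref{defn-eigen}); and $\log(1-d\cdot y')$ is smooth and bounded on the relevant ball, so the binomial expansion of $(s'-\log(1-d\cdot y'))^j$ only reshuffles lower-order coefficients within $H^k$. This self-contained route is arguably preferable to the paper's citation, since Ostermann's Theorem 3.1 treats $\mathbf{L}_\beta$ only for small $|\beta|$ (where a rank-continuity argument for the Riesz projection is immediate), and extending that proof verbatim to arbitrary $d\in\BB^N$ would require a uniform resolvent bound along a path from $0$ to $d$; your Lorentz transfer reduces everything to the $d=0$ case in one step. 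The only caveat, which you inherit from the paper rather than introduce, is that the Lorentz change of variables is only cleanly a self-map of the unit ball, so for $R>1$ the identification of eigenspaces on $\mathcal{H}^k(\BB^N_R)$ implicitly relies on the extension/restriction conventions already used in \Cref{SpectralEquivalence}.
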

	\begin{proof}
		By \Cref{SpectralEquivalence} and \Cref{SpecL0}, we obtain  $\sigma(\mathbf{L}_{d}) \cap \overline{\mathbb{H}_{\omega_{0}}} = \{0,1\} \,.$ The properties of eigenspaces follow from \Cref{EV},  \Cref{SpecL0} and the transformation \Cref{transform-eigen-eq}. The properties of the projection $\mathbf{P}_{\lambda,d}$ follow from the proof of \cite[Theorem 3.1]{ostermann2024stable}.
	\end{proof}
	
	\subsection{Uniform growth bounds} 
	
	\begin{prop} \label{prop-uniform-bound}
		Let $d_0 \in \BB^N$ and $(N,p,k,R)\in \NN\times\RR_{>1}\times\NN\times\RR_{\ge 1}$ such that $\frac{N}{2}-s_{p}-k < 0$ and $R<|d_0|^{-1}$, 
		and let $\omega_{N,p,k}<\omega_{0}<0$ be arbitrary but fixed. Then, there exist constants $R_0>R/(1-|d_0|R)$ and $C>0$ such that 
		\begin{equation*}
			\| \mathbf{R}_{\mathbf{L}_{d}}(z) \|_{\mathcal{L}\left(\mathcal{H}^{k}(\BB^{N}_R)\right)} \leq C
		\end{equation*}
		holds for all $z\in\overline{\mathbb{H}_{\omega_{0}}}\setminus \Big( \mathbb{D}_{\frac{|\omega_{0}|}{2}}(0) \cup \mathbb{D}_{\frac{|\omega_{0}|}{2}}(1) \Big)$ and all $d\in\overline{\BB^{N}_{R_0^{-1}}(d_0)}$.
	\end{prop}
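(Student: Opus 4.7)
My plan is to reduce the required estimate to a uniform resolvent bound for the single operator $\mathbf{L}_{d_0}$, and then propagate that bound to nearby $d$ via the second resolvent identity. The constraint $R_0 > R/(1-|d_0|R)$ is chosen precisely so that $|d|\leq |d_0|+R_0^{-1} < 1/R$ for every $d\in\overline{\BB^{N}_{R_0^{-1}}(d_0)}$; consequently the potential $V_d(y)=(s_p+1)(s_p+2)(1-|d|^2)(1+d\cdot y)^{-2}$ stays smooth on $\overline{\BB^{N}_R}$ and depends smoothly on $d$. In particular, $\sup_d\|\mathbf{L}_d'\|_{\mathcal{L}(\mathcal{H}^{k}(\BB^{N}_R))}$ is finite and $d\mapsto\mathbf{L}_d'$ is continuous in operator norm.

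For the far half-plane I fix $\omega_+>0$ strictly larger than the uniform growth exponent $\omega_{d_0,p,k,\varepsilon}+M_{N,p,k,\varepsilon}\sup_{d}\|\mathbf{L}_d'\|$ produced by \Cref{SGbeta}. The Laplace transform representation
\begin{equation*}
\mathbf{R}_{\mathbf{L}_{d}}(z) = \int_{0}^{\infty}\ee^{-zs}\mathbf{S}_{d}(s)\dd s
\end{equation*}
then yields
\begin{equation*}
\|\mathbf{R}_{\mathbf{L}_d}(z)\|_{\mathcal{L}(\mathcal{H}^{k}(\BB^{N}_R))}\leq\frac{M_{N,p,k,\varepsilon}}{\operatorname{Re}(z)-\omega_+}
\end{equation*}
uniformly in $d$ for $\operatorname{Re}(z)\geq \omega_++1$. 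The genuinely non-trivial part is the remaining region
\begin{equation*}
\Omega:=\{z:\omega_{0}\leq\operatorname{Re}(z)\leq\omega_++1\}\setminus\big(\mathbb{D}_{|\omega_{0}|/2}(0)\cup\mathbb{D}_{|\omega_{0}|/2}(1)\big).
\end{equation*}
By \Cref{SpecThmLbeta}, $\mathbf{R}_{\mathbf{L}_{d_0}}$ is holomorphic on $\Omega$ and hence bounded on compact subsets. To rule out blow-up as $|\operatorname{Im}(z)|\to\infty$ I would apply the Gearhart--Pr\"uss theorem on the Hilbert space $\mathcal{H}^{k}(\BB^{N}_R)$: the Riesz projections $\mathbf{P}_{0,d_0},\mathbf{P}_{1,d_0}$ associated to the isolated eigenvalues $\{0,1\}$ are finite-rank (and in the stated complement of the two disks, they are holomorphic functions of $z$), while the restricted generator $\mathbf{L}_{d_0}|_{\ran(\mathbf{I}-\mathbf{P}_{0,d_0}-\mathbf{P}_{1,d_0})}$ has spectrum in $\{\operatorname{Re}(z)<\omega_{0}\}$. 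Gearhart--Pr\"uss on Hilbert space then produces a uniform bound on the restricted resolvent across the vertical strip, and reassembling with the explicit finite-rank contributions yields a constant $C_0$ with $\|\mathbf{R}_{\mathbf{L}_{d_0}}(z)\|\leq C_0$ on $\Omega$.

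Finally I extend to nearby $d$. Using the norm-continuity of $d\mapsto\mathbf{L}_d'$, I shrink $R_0^{-1}$ so that
\begin{equation*}
\sup_{d\in\overline{\BB^{N}_{R_0^{-1}}(d_0)}}\|\mathbf{L}_d'-\mathbf{L}_{d_0}'\|_{\mathcal{L}(\mathcal{H}^{k}(\BB^{N}_R))}\leq\frac{1}{2C_0}.
\end{equation*}
Since $\mathbf{L}_{d}=\mathbf{L}_{d_0}+(\mathbf{L}_d'-\mathbf{L}_{d_0}')$, the second resolvent identity gives the convergent Neumann series
\begin{equation*}
\mathbf{R}_{\mathbf{L}_{d}}(z)=\mathbf{R}_{\mathbf{L}_{d_0}}(z)\sum_{n=0}^{\infty}\big((\mathbf{L}_d'-\mathbf{L}_{d_0}')\mathbf{R}_{\mathbf{L}_{d_0}}(z)\big)^{n},
\end{equation*}
whence $\|\mathbf{R}_{\mathbf{L}_{d}}(z)\|\leq 2C_0$ uniformly on $\Omega$, and combined with the far half-plane estimate the full proposition follows. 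The main obstacle is the vertical-strip step for $\mathbf{L}_{d_0}$: one has to verify rigorously that the stable part of $\mathbf{S}_{d_0}$ is exponentially decaying with rate strictly better than $\omega_0$, so that Gearhart--Pr\"uss applies on a slightly larger half-plane than $\overline{\mathbb{H}_{\omega_0}}$; as a hands-on alternative, one can estimate $\mathbf{R}_{\mathbf{L}_{d_0}}(z)$ directly from the resolvent equation $(z-\mathbf{L}_{d_0})\mathbf{u}=\mathbf{f}$ by using the hyperbolic structure of $\overline{\mathbf{L}_{N,p,k}}$ and the compactness of $\mathbf{L}_{d_0}'$ to absorb the potential perturbation at large $|\operatorname{Im}(z)|$.
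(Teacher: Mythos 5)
Your overall architecture is sound and close to what the paper actually does (the paper's proof is a one-line deferral to Ostermann's Proposition~3.2 with $\mathbf{L}_0$ replaced by $\mathbf{L}_{d_0}$): a Laplace-transform bound from \Cref{SGbeta} for $\operatorname{Re}(z)$ large, a uniform bound for the single operator $\mathbf{L}_{d_0}$ on the remaining vertical strip, and a perturbation in $d$. The far half-plane estimate is correct, and your final step --- shrinking $R_0^{-1}$ so that $\|\mathbf{L}_d'-\mathbf{L}_{d_0}'\|\le \tfrac{1}{2C_0}$ and summing the second-resolvent-identity Neumann series to get $\|\mathbf{R}_{\mathbf{L}_d}(z)\|\le 2C_0$ --- is a clean and valid way to obtain uniformity in $d$ (it also shows $z\in\rho(\mathbf{L}_d)$, which you need).

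The gap is exactly where you suspect it: the uniform bound for $\mathbf{R}_{\mathbf{L}_{d_0}}(z)$ as $|\operatorname{Im}(z)|\to\infty$ in the strip, which is the only genuinely non-trivial content of the proposition. Your primary route via Gearhart--Pr\"uss is circular in the logical order of the paper. To run Gearhart--Pr\"uss in the direction ``semigroup decay $\Rightarrow$ resolvent bound'' you need to know that $\mathbf{S}_{d_0}$ restricted to $\ran(\mathbf{I}-\mathbf{P}_{d_0})$ decays at a rate better than $\omega_0$; but that is \Cref{stableEvo}, which is \emph{deduced from} \Cref{prop-uniform-bound}, and the only a priori estimate available at this stage is \Cref{SGbeta}, whose growth exponent $\omega_{N,p,k,\varepsilon}+M_{N,p,k,\varepsilon}\|\mathbf{L}_{d_0}'\|$ is in general large and positive, hence says nothing about $\overline{\mathbb{H}_{\omega_0}}$ with $\omega_0<0$. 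Knowing from \Cref{SpecThmLbeta} only that the spectrum of the restricted generator lies in $\{\operatorname{Re}(z)<\omega_0\}$ does not yield a uniform resolvent bound for a non-normal operator --- the possible failure of that implication is precisely why this proposition has content. The correct argument is your ``hands-on alternative,'' which is what Ostermann's proof actually does: factor $z-\mathbf{L}_{d_0}=\big(\mathbf{I}-\mathbf{L}_{d_0}'\mathbf{R}_{\overline{\mathbf{L}_{N,p,k,R}}}(z)\big)\big(z-\overline{\mathbf{L}_{N,p,k,R}}\big)$, use the uniform bound on the \emph{free} resolvent on $\overline{\mathbb{H}_{\omega_0}}$ coming from \Cref{FreeSemigroupTHM} (whose growth bound is negative under the standing assumption $\tfrac{N}{2}-s_p-k<0$), and show $\|\mathbf{L}_{d_0}'\mathbf{R}_{\overline{\mathbf{L}_{N,p,k,R}}}(z)\|\to0$ as $|\operatorname{Im}(z)|\to\infty$ using the smoothing/compact structure of the potential operator; the bounded remainder of the strip is then handled by holomorphy and compactness. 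As written, your proposal gestures at this step but does not carry it out, so the proof is incomplete at its crux.
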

\begin{proof}
	The proof follows similarly to \cite[Proposition 3.2]{ostermann2024stable}, with $\mathbf{L}_0$ replaced by $\mathbf{L}_{d_0}$. Hence, we omit the details.
\end{proof}
	
	Then, we can also derive the uniform growth bounds of the semigroup $\mathbf{S}_d$ for all $d \in \overline{B^N_{R_0^{-1}}(d_0)}$.
	\begin{thm}
		\label{stableEvo}
		Let $d_0 \in \BB^N$ and $(N,p,k,R)\in \NN\times\RR_{>1}\times\NN\times\RR_{\ge 1}$ such that $\frac{N}{2}-s_{p}-k < 0$ and $R<|d_0|^{-1}$.  Let $ d \in \overline{B^N_{R_0^{-1}}}(d_0)$ with $R_0>R/(1-|d_0|R)$. The semigroup $\mathbf{S}_{d}(s)$, introduced in \Cref{SGbeta}, and the projections $\mathbf{P}_{0,d},\mathbf{P}_{1,d} \in \mathcal{L}\left(\mathcal{H}^{k}(\BB^{N}_R)\right)$, introduced in \Cref{SpecThmLbeta}, satisfy
		\begin{align*}
			&&
			\mathbf{P}_{0,d}\mathbf{S}_{d}(s) &= \mathbf{S}_{d}(s)\mathbf{P}_{0,d} = \mathbf{P}_{0,d} \,,
			&
			\mathbf{P}_{0,d}\mathbf{P}_{1,d} &= \mathbf{0} \,, & \\
			&&
			\mathbf{P}_{1,d}\mathbf{S}_{d}(s) &= \mathbf{S}_{d}(s)\mathbf{P}_{1,d} = \ee^{s}\mathbf{P}_{1,d} \,,
			&
			\mathbf{P}_{1,d}\mathbf{P}_{0,d} &= \mathbf{0} \,. &
		\end{align*}
		Moreover, for each arbitrary but fixed $\omega_{N,p,k}<\omega_{0}<0$ there exist constants $R_0>R/(1-|d_0|R)$ and $M_{\omega_{0}}\geq 1$ such that
		\begin{equation*}
			\| \mathbf{S}_{d}(s) ( \mathbf{I} - \mathbf{P}_{d} ) \|_{\mathcal{L}\left(\mathcal{H}^{k}(\BB^{N}_R)\right)} \leq M_{\omega_{0}} \ee^{\omega_{0}s} \| \mathbf{I} - \mathbf{P}_{d} \|_{\mathcal{L}\left(\mathcal{H}^{k}(\BB^{N}_R)\right)}
		\end{equation*}
		for all $s\geq 0$ and all $d \in \overline{B^N_{R_0^{-1}}}(d_0)$.
	\end{thm}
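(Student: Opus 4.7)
My plan is to combine standard Riesz projection calculus for the algebraic identities with a Hilbert-space Gearhart--Prüss / inverse Laplace argument for the exponential decay, with Proposition \ref{prop-uniform-bound} supplying all the uniform-in-$d$ resolvent control. The overall structure parallels \cite[Theorem 3.2]{ostermann2024stable} for the ODE case $d=0$; the only genuine new point is that every constant must be tracked to be independent of $d\in\overline{\BB^{N}_{R_{0}^{-1}}(d_{0})}$.

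For the algebraic identities I would use that $\mathbf{P}_{0,d}$ and $\mathbf{P}_{1,d}$ are Riesz projections associated to isolated eigenvalues, hence commute with $\mathbf{L}_{d}$ on its domain and with $\mathbf{S}_{d}(s)$. By \Cref{SpecThmLbeta}, $\ran(\mathbf{P}_{0,d})=\ker(\mathbf{L}_{d})$ and $\ran(\mathbf{P}_{1,d})=\ker(\mathbf{I}-\mathbf{L}_{d})$, so on these invariant finite-dimensional subspaces $\mathbf{L}_{d}$ acts as $0$ and $\mathbf{I}$ respectively, giving $\mathbf{S}_{d}(s)\mathbf{P}_{0,d}=\mathbf{P}_{0,d}$ and $\mathbf{S}_{d}(s)\mathbf{P}_{1,d}=\ee^{s}\mathbf{P}_{1,d}$. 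The orthogonality $\mathbf{P}_{0,d}\mathbf{P}_{1,d}=\mathbf{P}_{1,d}\mathbf{P}_{0,d}=\mathbf{0}$ is obtained by contracting the contours in the Riesz integrals onto disjoint small circles about $0$ and $1$ and applying the resolvent identity.

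For the decay bound I set $\mathbf{P}_{d}:=\mathbf{P}_{0,d}+\mathbf{P}_{1,d}$ and work on the invariant subspace $\ran(\mathbf{I}-\mathbf{P}_{d})$, whose induced generator has spectrum contained in $\CC\setminus\overline{\mathbb{H}_{\omega_{0}}}$ by \Cref{SpecThmLbeta}. Writing the stable part of the semigroup via the inverse Laplace representation
\begin{equation*}
\mathbf{S}_{d}(s)(\mathbf{I}-\mathbf{P}_{d})\mathbf{f}
= \frac{1}{2\pi\ii}\int_{\omega_{0}-\ii\infty}^{\omega_{0}+\ii\infty} \ee^{zs}\,\mathbf{R}_{\mathbf{L}_{d}}(z)(\mathbf{I}-\mathbf{P}_{d})\mathbf{f}\,\dd z
\end{equation*}
on a dense subspace of $\mathcal{H}^{k}(\BB^{N}_R)$, I would estimate the right-hand side using the uniform resolvent bound of \Cref{prop-uniform-bound} (noting that the potential poles at $z=0,1$ are cancelled by $\mathbf{I}-\mathbf{P}_{d}$, so the integrand is analytic on and to the right of $\Re z = \omega_{0}$) combined with a Plancherel argument on the vertical line; equivalently, one may invoke the Hilbert-space Gearhart--Prüss theorem directly. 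This yields $\|\mathbf{S}_{d}(s)(\mathbf{I}-\mathbf{P}_{d})\|\leq M_{\omega_{0}}\ee^{\omega_{0}s}$, from which the stated bound with the factor $\|\mathbf{I}-\mathbf{P}_{d}\|$ follows.

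The main obstacle is ensuring $M_{\omega_{0}}$ and $R_{0}$ can be chosen uniformly in $d\in\overline{\BB^{N}_{R_{0}^{-1}}(d_{0})}$. The uniform resolvent bound that feeds the Laplace estimate is exactly the content of \Cref{prop-uniform-bound}. The complementary ingredient is a uniform bound on $\|\mathbf{P}_{d}\|$, which follows from the contour representation $\mathbf{P}_{\lambda,d}=\frac{1}{2\pi\ii}\int_{\pd\mathbb{D}_{r}(\lambda)}\mathbf{R}_{\mathbf{L}_{d}}(z)\,\dd z$ for $\lambda\in\{0,1\}$ applied on fixed small circles that lie in the region of uniform resolvent control. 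With these two uniform inputs, the Gearhart--Prüss / Laplace estimate delivers $M_{\omega_{0}}$ independently of $d$, completing the argument.
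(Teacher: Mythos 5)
Your proposal is correct and follows essentially the same route as the paper, which simply invokes \Cref{SpecThmLbeta}, \Cref{prop-uniform-bound}, and the proof of \cite[Theorem 3.2]{ostermann2024stable}: Riesz projection calculus for the algebraic identities, and the uniform resolvent bound fed into a Gearhart--Pr\"uss / inverse Laplace estimate on the stable subspace for the decay. Your explicit tracking of the uniformity in $d\in\overline{\BB^{N}_{R_{0}^{-1}}(d_{0})}$ (including the uniform bound on $\|\mathbf{P}_{d}\|$ via the contour representation) is precisely the adaptation the paper relies on.
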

	\begin{proof}
		This result follows directly from \Cref{SpecThmLbeta}, \Cref{prop-uniform-bound}, and the proof of \cite[Theorem 3.2]{ostermann2024stable}.
	\end{proof}

\section{Proof of \Cref{MainTHM} and \Cref{thm-trapping}}\label{sec:nonlinear}
Now, we consider the full nonlinear Cauchy problem \eqref{CauchyProblem}. In terms of the self-similar variables
	\begin{align*}
	u(t, x) = \frac{1}{\left(T-t\right)^{\frac{2}{p-1}}}  U_{T,0}(s, y), \quad s=-\log \left(T-t\right) + \log T, \quad y=\frac{x}{T-t},
\end{align*}
and $U(s,y) = \kappa_d(y) + \eta(s,y)$, \eqref{CauchyProblem} is equivalent to
\begin{align}\label{nlw-eta}
	\begin{aligned}
		&\partial_{ss} \eta + \frac{p+3}{p-1} \partial_s \eta + 2 y\cdot\nabla \partial_s \eta +   \frac{2(p+1)}{p-1} y\cdot\nabla \eta + (y_i y_j - \delta_{ij}) \partial_i \partial_j \eta +\left(\frac{2(p+1)}{(p-1)^2} -  p|\kappa_d|^{p-1} \right) \eta \\
		& = |\kappa_d+\eta|^{p-1}(\kappa_d+\eta) - \kappa_d^p - p |\kappa_d|^{p-1}\eta,  \quad s>0, |y|\le R,
	\end{aligned}
\end{align}
with the initial data
\begin{equation}\label{InitialData-eta}
	\begin{aligned}
		&\eta(0,y) = T^{\frac{2}{p-1}}(u_{1,0,d_0}^{*}(0,Ty) + f(Ty) ) - \kappa_d(y) \,, \quad |y|\le R  \,,\\
		& \left(\pd_s +y\cdot \nabla y + \frac{2}{p-1}\right)\eta(0,y) = T^{\frac{p+1}{p-1}}\left(\pd_{t}u_{1,0,d_0}^{*}(0,Ty) + g(Ty) \right) -  \left(y\cdot \nabla + \frac{2}{p-1}\right)\kappa_d(y) \,,  \quad |y|\le R  \,,
	\end{aligned}
\end{equation}
Rewriting \eqref{nlw-eta}-\eqref{InitialData-eta} as a first-order system, we have 
\begin{equation}
	\label{abstractCauchy}
	\renewcommand{\arraystretch}{1.2}
	\left\{
	\begin{array}{rcl}
		\pd_{s}\mathbf{q}(s,\,.\,) &=& \mathbf{L}_{d}\mathbf{q}(s,\,.\,) + \mathbf{N}_{d}(\mathbf{q}(s,\,.\,)) \,, \\
		\mathbf{q}(0,\,.\,) &=&  \mathbf{f}^{T} + \mathbf{f}_{0}^{T} - \mathbf{f}_{d} \,,
	\end{array}
	\right.
\end{equation}
where
\begin{equation}
	\label{Nonlin}
	\mathbf{N}_{d}(\mathbf{q}(s,\,.\,))(y) =
	\begin{bmatrix}
		0\\
		|\kappa_d(y)+q_1(s,y)|^{p-1}(\kappa_d(y)+q_1(s,y)) - \kappa_d(y)^p - p |\kappa_d(y)|^{p-1}q_1(s,y),
	\end{bmatrix}
\end{equation}
and initial data 
\begin{align*}
	&&
	\mathbf{f}^{T}(y) &=
	\begin{bmatrix}
		T^{s_{p}} f(Ty) \\
		T^{s_{p}+1} g(Ty)
	\end{bmatrix}
	\,, &
	\mathbf{f}_{0}^{T}(y) &=
	\begin{bmatrix}
		T^{s_{p}} \kappa_0 \gamma(d_0)^{-s_p} (1+ T d_0\cdot y)^{-s_p} \\
		T^{s_{p}+1} s_p \kappa_0 \gamma(d_0)^{-s_p} (1+ T d_0\cdot y)^{-s_p-1} 
	\end{bmatrix}
	\,, & \\
	&&
	\mathbf{f}_{d}(y) &=
	\begin{bmatrix}
		\kappa_0 \gamma(d)^{-s_p} (1+  d\cdot y)^{-s_p}\\
		s_p \kappa_0 \gamma(d)^{-s_p} (1+ d\cdot y)^{-s_p-1} 
	\end{bmatrix}
	\,,
	&& &
\end{align*}

Since we have derived the linear stability of $\kappa_d$ along its stable subspace by \Cref{stableEvo}, the nonlinear stability follows from the same strategy in \cite{ostermann2024stable}.  More precisely, we have
	\begin{prop}
	\label{StableNonlinFlowClassic}
	Let $d_0\in \BB^N$ and $(N,p,k,R)\in\NN\times\RR_{>1}\times \NN\times\RR_{\ge 1}$ such that $k> \frac{N}{2}$ and $R<|d_0|^{-1}$. Let $0 < \varepsilon < \omega_{p}$. There are constants $0<\delta_{\varepsilon}<1$, $C_{\varepsilon} > 1$ such that for all $0<\delta\leq\delta_{\varepsilon}$, $C\geq C_{\varepsilon}$ and all real-valued $\mathbf{f}\in C^{\infty}(\RR^{N})^{2}$ with $\| \mathbf{f} \|_{\mathcal{H}^{k}(\RR^{N})} \leq \frac{\delta}{C^{2}}$, there are parameters $d^{*}\in\overline{\BB^{N}_{\frac{\delta}{C}}(d_0)}$, $T^{*}\in\overline{\BB^{1}_{\frac{\delta}{C}}(1)}$ and a unique $\mathbf{q}_{d^{*},T^{*}} \in C^{\infty}\big(\overline{(0,\infty)\times\BB^{N}_R}\big)^{2}$ such that $\|\mathbf{q}_{d^{*},T^{*}}(s,\,.\,) \|_{\mathcal{H}^{k}(\BB^{N}_R)} \leq \delta \ee^{(-\omega_{p}+\varepsilon)s}$ and
	\begin{equation}
		\label{CauchyClassical}
		\renewcommand{\arraystretch}{1.2}
		\left\{
		\begin{array}{rcl}
			\pd_{s}\mathbf{q}_{d^{*},T^{*}}(s,\,.\,) &=& \mathbf{L}_{d^{*}}\mathbf{q}_{d^{*},T^{*}}(s,\,.\,) + \mathbf{N}_{d^{*}}(\mathbf{q}_{d^{*},T^{*}}(s,\,.\,)) \,, \\
			\mathbf{q}_{d^{*},T^{*}}(0,\,.\,) &=&  \mathbf{f}^{T^{*}} + \mathbf{f}_{0}^{T^{*}} - \mathbf{f}_{d^{*}} \,,
		\end{array}
		\right.
	\end{equation}
	for all $s\geq 0$.
\end{prop}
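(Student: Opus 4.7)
The plan is to adapt the Lyapunov--Perron modulation scheme of \cite{ostermann2024stable} verbatim: the crucial ingredients---uniform exponential decay of $\mathbf{S}_{d}(s)(\mathbf{I}-\mathbf{P}_{d})$ on the stable subspace, locally uniformly in $d\in\overline{\BB^{N}_{R_0^{-1}}(d_0)}$, and the fact that the unstable eigenspace $\ran\mathbf{P}_{d}=\ker\mathbf{L}_{d}\oplus\ker(\mathbf{I}-\mathbf{L}_{d})$ has dimension exactly $N+1$---are provided by \Cref{stableEvo} and \Cref{SpecThmLbeta}. Writing \eqref{CauchyClassical} as the Duhamel integral equation
\begin{equation*}
\mathbf{q}(s) = \mathbf{S}_{d}(s)\mathbf{u}_{0}(d,T) + \int_{0}^{s} \mathbf{S}_{d}(s-s')\mathbf{N}_{d}(\mathbf{q}(s'))\dd s' \,,
\end{equation*}
with $\mathbf{u}_{0}(d,T)\coloneqq\mathbf{f}^{T}+\mathbf{f}_{0}^{T}-\mathbf{f}_{d}$, the proof reduces to constructing a fixed point in the decaying Banach space
\begin{equation*}
\mathcal{X}_{\delta} = \Big\{ \mathbf{q}\in C([0,\infty),\mathcal{H}^{k}(\BB^{N}_R)) \;\Big|\; \sup_{s\ge 0} \ee^{(\omega_{p}-\varepsilon)s}\|\mathbf{q}(s)\|_{\mathcal{H}^{k}(\BB^{N}_R)} \le \delta \Big\} \,,
\end{equation*}
and then arranging for the unstable projection of the solution to vanish.

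First I would establish the nonlinear estimate $\|\mathbf{N}_{d}(\mathbf{q}_{1})-\mathbf{N}_{d}(\mathbf{q}_{2})\|_{\mathcal{H}^{k}(\BB^{N}_R)} \lesssim (\|\mathbf{q}_{1}\|+\|\mathbf{q}_{2}\|)\|\mathbf{q}_{1}-\mathbf{q}_{2}\|$ on small balls of $\mathcal{H}^{k}(\BB^{N}_R)$, using the Moser/algebra inequalities valid once $k>\frac{N}{2}$; this step is identical to the corresponding estimate in \cite{ostermann2024stable} since the structure of $\mathbf{N}_{d}$ in \eqref{Nonlin} only differs from the $d=0$ case by the smooth, uniformly bounded background $\kappa_{d}$ on $\overline{\BB^{N}_R}$ (which is exactly why we require $R<|d|^{-1}$). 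Second, I would use the standard Lyapunov--Perron trick: define the modified map
\begin{equation*}
\mathcal{K}_{d,T}(\mathbf{q})(s) = \mathbf{S}_{d}(s)(\mathbf{I}-\mathbf{P}_{d})\mathbf{u}_{0}(d,T) + \int_{0}^{s}\mathbf{S}_{d}(s-s')(\mathbf{I}-\mathbf{P}_{d})\mathbf{N}_{d}(\mathbf{q}(s'))\dd s' - \mathbf{C}(\mathbf{q},d,T)(s) \,,
\end{equation*}
where the correction $\mathbf{C}(\mathbf{q},d,T)(s)$ absorbs the exponentially growing $\ran\mathbf{P}_{1,d}$ contribution and the stationary $\ran\mathbf{P}_{0,d}$ contribution. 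The bound from \Cref{stableEvo} combined with the nonlinear estimate yields that $\mathcal{K}_{d,T}$ is a contraction on $\mathcal{X}_{\delta}$ for $\delta$ small and $C$ large, giving a unique fixed point $\mathbf{q}=\mathbf{q}(\,\cdot\,;d,T)\in\mathcal{X}_{\delta}$.

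Third, I would close the argument by modulating $(d,T)$ so that the correction vanishes. The fixed point satisfies \eqref{CauchyClassical} if and only if
\begin{equation*}
\Phi(d,T) \coloneqq \mathbf{P}_{0,d}\mathbf{u}_{0}(d,T) + \mathbf{P}_{1,d}\mathbf{u}_{0}(d,T) + \int_{0}^{\infty}\ee^{-s}\mathbf{P}_{1,d}\mathbf{N}_{d}(\mathbf{q}(s;d,T))\dd s = 0 \,,
\end{equation*}
an equation in the finite-dimensional space $\ran\mathbf{P}_{d}\cong\RR^{N+1}$. At $\mathbf{f}=0$, $d=d_0$, $T=1$ one has $\mathbf{u}_{0}(d_0,1)=0$ and so $\Phi(d_0,1)=0$; the derivatives $\pd_{d_i}\mathbf{u}_{0}(d,T)|_{(d_0,1)}=-\pd_{d_i}\mathbf{f}_{d}|_{d=d_0}$ and $\pd_{T}\mathbf{u}_{0}(d,T)|_{(d_0,1)}=\pd_{T}\mathbf{f}_{0}^{T}|_{T=1}$ must, by the symmetry origin of the eigenmodes (translation and Lorentz boost for $\lambda=0$, time shift for $\lambda=1$) and the explicit formulas in \Cref{EV}, span the $(N+1)$-dimensional unstable subspace $\ran\mathbf{P}_{d_0}$. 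This nondegeneracy lets me invoke either the implicit function theorem or a Brouwer/Schauder argument on the ball $\overline{\BB^{N}_{\delta/C}(d_0)}\times\overline{\BB^{1}_{\delta/C}(1)}$ to produce $(d^{*},T^{*})$ with $\Phi(d^{*},T^{*})=0$. Smoothness of $\mathbf{q}_{d^{*},T^{*}}$ then follows from classical local well-posedness inside the light cone \cite{lindblad1995existence,tao1999low} together with the propagation-of-regularity principle for semilinear wave equations.

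The main obstacle is the Jacobian nondegeneracy of $(d,T)\mapsto\Phi(d,T)$ at $(d_0,1)$: one must verify that the differential of $-\mathbf{f}_{d}+\mathbf{f}_{0}^{T}$ in $(d,T)$, composed with the spectral projection $\mathbf{P}_{d_0}=\mathbf{P}_{0,d_0}+\mathbf{P}_{1,d_0}$, is an isomorphism $\RR^{N+1}\to\ran\mathbf{P}_{d_0}$. This is a direct but essential computation using the explicit eigenmodes \eqref{EV0beta}--\eqref{EV1beta}; it is the step where the parameter-matching between the $N$ Lorentz boost/translation directions and the $N$-dimensional kernel $\ker\mathbf{L}_{d_0}$, and between the single $T$ direction and the one-dimensional $\ker(\mathbf{I}-\mathbf{L}_{d_0})$, is actually carried out. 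Once this is done, the remainder is the fixed-point machinery of \cite{ostermann2024stable}, which transfers without change thanks to the uniform bounds of \Cref{prop-uniform-bound} and \Cref{stableEvo} in a neighborhood of $d_0$.
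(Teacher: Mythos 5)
Your proposal follows essentially the same route as the paper (and as Ostermann's original argument): the nonlinear Lipschitz estimate (\Cref{NonLinLip}), a Lyapunov--Perron fixed point with a correction term valued in $\ran\mathbf{P}_{d}$ (\Cref{CorrTerm}, \Cref{NonLinEvoStableSub}), modulation of $(d,T)$ using the nondegeneracy of $(d,T)\mapsto (T-1)\mathbf{f}_{1,d}+(d^{i}-d_0^{i})\mathbf{f}_{0,d,i}$ against the unstable eigenmodes (\Cref{InitDatOpSmall}, \Cref{StableNonlinFlowMild}), and a final regularity upgrade. The only slip is that your displayed vanishing condition $\Phi(d,T)=0$ omits the term $\mathbf{P}_{0,d}\int_{0}^{\infty}\mathbf{N}_{d}(\mathbf{q}(s'))\dd s'$, which must also be cancelled for the $\lambda=0$ component to decay---though your prose correctly states that the correction absorbs the stationary $\ran\mathbf{P}_{0,d}$ contribution, so this is a typo rather than a gap.
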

\begin{proof}
	The proof follows the argument in \cite[Proposition 4.3]{ostermann2024stable} with minor modifications. For completeness, the detailed proof is provided in the appendix.
\end{proof}

Now we are ready to prove  \Cref{MainTHM} and \Cref{thm-trapping}.
\begin{proof}[Proof of \Cref{MainTHM}]
	Fix $0<\varepsilon<\omega_{p}$. From \Cref{StableNonlinFlowClassic}, we pick $0<\delta_{\varepsilon}<1$ and $C_{\varepsilon}\geq 1$. Then, for any $0<\delta\leq\delta_{\varepsilon}$, $C\geq C_{\varepsilon}$ and real-valued $(f,g)\in C^{\infty}(\RR^{N})^{2}$ with $\| (f,g) \|_{H^{k}(\RR^{N})\times H^{k-1}(\RR^{N})} \leq \frac{\delta}{C^{2}}$ there exists a unique $( q_{T^{*},0, d^{*},1},q_{T^{*},0, d^{*},2} ) \in C^{\infty}\big(\overline{(0,\infty)\times\BB^{N}_R}\big)^{2}$ that is a jointly smooth classical solution to the abstract Cauchy problem \eqref{CauchyClassical}. Denote
	\begin{equation*}
		u \coloneqq u_{T^{*}, 0, d^{*}}^{*} + v \in C^{\infty}\big(\Omega^{1,N}(T^{*})\big),
	\end{equation*}
	where $v \in C^{\infty}\big(\Omega^{1,N}(T^{*})\big)$ given by
	\begin{equation*}
		v(t,x) \coloneqq (T^{*}-t)^{-s_{p}} q_{T^{*},0,d^{*},1} \Big( \log \tfrac{T^{*}}{T^{*}-t}, \tfrac{x}{T^{*}-t} \Big)
	\end{equation*}
	is related to $q_{T^{*},0,d^{*},2}$ via
	\begin{equation*}
		\pd_{t} v(t,x) = (T^{*}-t)^{-s_{p}-1} q_{T^{*},0, d^{*},2} \Big( \log \tfrac{T^{*}}{T^{*}-t}, \tfrac{x}{T^{*}-t} \Big).
	\end{equation*}
	Then, $u(t,x)$ is the unique solution to the Cauchy problem \eqref{CauchyProblem} posed in \Cref{MainTHM}, 
	Moreover, by scaling of the homogeneous seminorms, we infer from \Cref{StableNonlinFlowClassic} the bounds
	\begin{align*}
		(T^{*}-t)^{ - \frac{N}{2} + s_{p} + s }\| u(t,\,.\,) - u_{T^{*}, 0, d^{*}}^{*}(t,\,.\,) \|_{\dot{H}^{s}(\BB^{N}_{R(T^{*}-t)})} &=
		\| q_{1} \Big( \log \tfrac{T^{*}}{T^{*}-t}, \,.\, \Big) \|_{\dot{H}^{s}(\BB^{N}_R)} \\&\leq
		\delta \left( \tfrac{T^{*}-t}{T^{*}} \right)^{\omega_{p}-\varepsilon}
	\end{align*}
	for $s=0,1,\ldots,k$, and
	\begin{align*}
		&
		(T^{*}-t)^{ - \frac{N}{2} + s_{p} + s } \| \pd_{t}u(t,\,.\,) - \pd_{t}u_{T^{*},0,d^{*}}^{*}(t,\,.\,) \|_{\dot{H}^{s-1}(\BB^{N}_{R(T^{*}-t)})} \\&\indent=
		\| q_{2} \Big( \log \tfrac{T^{*}}{T^{*}-t}, \,.\, \Big) \|_{\dot{H}^{s-1}(\BB^{N}_R)} \\&\indent\leq
		\delta \left( \tfrac{T^{*}-t}{T^{*}} \right)^{\omega_{p}-\varepsilon}
	\end{align*}
	for $s=1,\ldots,k$.
\end{proof}

\begin{proof}[Proof of \Cref{thm-trapping}] Note that for any solution $U$ of equation \eqref{nlw-ss-s2}, by the time translation invariance, 
\begin{align*}
	\tilde{U}(s', y'):= \left(1+b e^{s'}\right)^{-\frac{2}{p-1}}U\left(s'-\log \left(1+b e^{s'}\right)+\log(1+b), \frac{y'}{1+b e^{s'}}\right);
\end{align*}
also solves \eqref{nlw-ss-s2} for any $b\in \mathbb{R}$. In particular, taking $b = T^{-1}-1$ and denoting $\tilde{U}_T(s', y')= \tilde{U}(s', y')$, we have 
\begin{align*}
	\tilde{U}_T(0, y')= T^{s_p}U\left(0, Ty'\right).
\end{align*}
Denote 
$$f(y) := U(0,y) - \kappa_{d_0}(y)$$
and
$$g(y) := (\pd_s+ y\cdot\nabla +s_p)(U(s,y) -\kappa_{d_0}(y)) \mid_{s=0}.$$ 
Let $\tilde{U}(s',y') = \kappa_{d}(y') + \tilde{\eta}(s',y')$ and $\tilde{\textbf{q}}= (\tilde{q}_1, \tilde{q}_2)^\perp  = (\tilde{\eta},  (\pd_{s'}+ y'\cdot\nabla +s_p) \tilde{\eta} )^\perp$, then $\tilde{\textbf{q}} $ solves the Cauchy problem \eqref{abstractCauchy}. By \Cref{StableNonlinFlowClassic}, there exists constants $\epsilon>0, 0<\delta<1, C>1$ such that for all $\|(f,g)\|_{\mathcal{H}^k(\BB^{N})}\le \delta$, there are parameters $d^{*}\in\overline{\BB^{N}_{\frac{\delta}{C}}(d_0)}$, $T^{*}\in\overline{\BB^{1}_{\frac{\delta}{C}}(1)}$ and a unique solution $\tilde{\mathbf{q}}_{d^{*},T^{*}} \in C^{\infty}\big(\overline{(0,\infty)\times\BB^{N}}\big)^{2}$ 
to \Cref{CauchyClassical} satisfying $\|\tilde{\mathbf{q}}_{d^{*},T^{*}}(s',\,.\,) \|_{\mathcal{H}^{k}(\BB^{N})} \leq \delta \ee^{(-\omega_{p}+\varepsilon)s'}$
for all $s'\geq 0$. By the uniqueness, we have 
$$ \tilde{U}_{T^*}(s',y') = \kappa_{d^*}(y') +  \tilde{q}_{d^{*},T^{*},1}(s',y').$$
Let $b^* = (T^*)^{-1} -1$, $s = s' - \log(1+b^*e^{s'}) +\log (1+b^*), y = \frac{y'}{1+b^*e^{s'}}$, then 
\begin{align*}
	U(s,y) &=  (1+b^* e^{s'})^{\frac{2}{p-1}}\tilde{U}_{T^*}(s',y')\\
	& =   \left( \frac{1+b^*}{1+b^* -b^* e^{s}}\right)^{\frac{2}{p-1}}\tilde{U}_{T^*}\left(s - \log(1+b^* - b^* e^{s}), y \left( \frac{1+b^*}{1+b^* -b^* e^{s}}\right)\right).
\end{align*}
Therefore, we have
\begin{enumerate}[1)]
	\item if $b^*>0$, then $U(s,y)$ is not well defined for all $(s,y) \in [0,+\infty)\times \BB^N$, 
	\item if $b^*<0$, then		
	\begin{align*}
		\left\| \binom{U(s)}{\partial_s U(s)} \right\|_{{H}^k(\BB^N)\times {H}^{k-1}(\BB^N)} \rightarrow 0 \text { as } s \rightarrow \infty \text { exponentially fast }
	\end{align*}
	\item if $b^*=0$, then $U(s,y) = \tilde{U}_{T^*}(s,y)$, which implies that		
	\begin{align*}
		\forall s \geq 0, \  \left\|\binom{U(s)}{\partial_s U(s)}- \binom{\kappa_{d^*}(y)}{0} \right\|_{{H}^k(\BB^N)\times {H}^{k-1}(\BB^N)} \leq C \delta  e^{-(\omega_{p}+\varepsilon)s}.
	\end{align*}
\end{enumerate}
\end{proof}

\section*{Acknowledgment}
The author would like to thank Professor Tej-eddine Ghoul, Professor Nader Masmoudi, and Professor Hatem Zaag for valuable discussions and insightful comments that have greatly influenced this work.

\appendix	
	\section{Nonlinear stability}

	With the semigroup from \Cref{SGbeta} at hand, Duhamel's principle yields a strong formulation
	\begin{equation}
		\label{NonLinDuhamel}
		\mathbf{q}(s,\,.\,) = \mathbf{S}_{d}(s)\mathbf{q}(0,\,.\,) + \int_{0}^{s} \mathbf{S}_{d}(s-s')\mathbf{N}_{d}(\mathbf{q}(s',\,.\,)) \dd s'
	\end{equation}
	of problem \eqref{abstractCauchy}. 
	
	\subsection{Properties of the nonlinearity}
	The following nonlinear estimates can be derived by a trivial extension of \cite[Lemma 4.2]{ostermann2024stable}. 
	\begin{lem}
		\label{NonLinLip}
		Let $d_0 \in \BB^N$ and $(N,p,k,R)\in \NN\times\RR_{>1}\times\NN\times\RR_{\ge 1}$ such that $k>\frac{N}{2}$ and $R<|d_0|^{-1}$.  Let $R_0>R/(1-|d_0|R)$. There exists a constant $\delta>0$ such that the map
		\begin{equation*}
			\mathbf{N}_{d}: \mathcal{H}^{k}_{\delta}(\BB^{N}_R) \rightarrow \mathcal{H}^{k}(\BB^{N}_R) \,, \qquad \mathbf{f} \mapsto \mathbf{N}_{d}(\mathbf{f}) \,,
		\end{equation*}
		is well-defined with
		\begin{equation*}
			\mathbf{N}_{d}(\mathbf{f}) \in \mathcal{H}^{k+1}(\BB^{N}_R) \subset \mathcal{H}^{k}(\BB^{N}_R)
		\end{equation*}
		for all $\mathbf{f}\in \mathcal{H}^{k}_{\delta}(\BB^{N}_R)$. Moreover,
		\begin{align*}
			\| \mathbf{N}_{d} (\mathbf{f}) \|_{\mathcal{H}^{k}(\BB^{N}_R)} &\lesssim \delta^{2} \,, \\
			\| \mathbf{N}_{d} (\mathbf{f}) - \mathbf{N}_{d} (\mathbf{g}) \|_{\mathcal{H}^{k}(\BB^{N}_R)} &\lesssim ( \| \mathbf{f} \|_{\mathcal{H}^{k}(\BB^{N}_R)} + \| \mathbf{g} \|_{\mathcal{H}^{k}(\BB^{N}_R)} ) \| \mathbf{f} - \mathbf{g} \|_{\mathcal{H}^{k}(\BB^{N}_R)} \,,
		\end{align*}
		for all $\mathbf{f},\mathbf{g}\in \mathcal{H}^{k}_{\delta}(\BB^{N}_R)$ and all $d\in\overline{\BB^{N}_{R_0^{-1}}(d_0)}$.
	\end{lem}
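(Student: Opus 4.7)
The plan is to exploit the fact that $\mathbf{N}_{d}(\mathbf{f})$ has a vanishing first component, so setting $F(u):=|u|^{p-1}u$ and $w := F(\kappa_{d}+f_{1}) - F(\kappa_{d}) - F'(\kappa_{d}) f_{1}$, the lemma reduces to an estimate on $w$ in $H^{k-1}(\BB^{N}_{R})$. In fact I aim to control $\|w\|_{H^{k}(\BB^{N}_{R})}$, which yields the stronger claim $\mathbf{N}_{d}(\mathbf{f}) \in \mathcal{H}^{k+1}(\BB^{N}_{R})$. The preparatory observation is that the choice $R_{0} > R/(1-|d_{0}|R)$ forces $R|d|<1$ uniformly for $d\in\overline{\BB^{N}_{R_{0}^{-1}}(d_{0})}$, so the family $\{\kappa_{d}\}$ is bounded in $C^{m}(\overline{\BB^{N}_{R}})$ for every $m$ and uniformly bounded below by some $c_{0}>0$. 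Since $k>\frac{N}{2}$, the space $H^{k}(\BB^{N}_{R})$ is a Banach algebra embedding continuously into $L^{\infty}(\BB^{N}_{R})$; therefore, for $\delta$ small (independent of $d$) the pointwise estimate $\|f_{1}\|_{L^{\infty}}\leq c_{0}/2$ is automatic and $\kappa_{d}+tf_{1} \geq c_{0}/2$ for all $t\in[0,1]$.

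The quadratic bound then follows from Taylor's formula with integral remainder:
\begin{equation*}
    w = f_{1}^{2}\int_{0}^{1} (1-t)\, F''(\kappa_{d}+tf_{1})\,\mathrm{d}t.
\end{equation*}
On the region $\{u\geq c_{0}/2\}$ the map $u\mapsto F''(u)=p(p-1)u^{p-2}$ is $C^{\infty}$, so a standard Moser composition estimate gives $\|F''(\kappa_{d}+tf_{1})\|_{H^{k}(\BB^{N}_{R})} \leq C$ uniformly in $t\in[0,1]$, $d\in\overline{\BB^{N}_{R_{0}^{-1}}(d_{0})}$, and $\mathbf{f}\in\mathcal{H}^{k}_{\delta}(\BB^{N}_{R})$. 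The Banach-algebra property of $H^{k}$, together with integration in $t$, then yields $\|w\|_{H^{k}(\BB^{N}_{R})} \lesssim \|f_{1}\|_{H^{k}}^{2} \lesssim \delta^{2}$.

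For the Lipschitz bound I would write
\begin{equation*}
    \mathbf{N}_{d}(\mathbf{f}) - \mathbf{N}_{d}(\mathbf{g}) = (f_{1}-g_{1})\int_{0}^{1}\bigl[F'(\kappa_{d}+g_{1}+\tau(f_{1}-g_{1})) - F'(\kappa_{d})\bigr]\,\mathrm{d}\tau,
\end{equation*}
and expand the bracket once more via Taylor to produce a factor pointwise linear in $g_{1}+\tau(f_{1}-g_{1})$, whose $H^{k}$-norm is controlled by $\|\mathbf{f}\|_{\mathcal{H}^{k}}+\|\mathbf{g}\|_{\mathcal{H}^{k}}$. The Banach-algebra property closes the estimate. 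The only genuine difficulty is maintaining constants that are uniform in $d\in\overline{\BB^{N}_{R_{0}^{-1}}(d_{0})}$; this is precisely what the choice of $R_{0}$ achieves, since it preserves the uniform smoothness and uniform lower bound of $\kappa_{d}$, whence the Moser estimates for $F'$ and $F''$ hold with $d$-independent constants.
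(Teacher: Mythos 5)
Your proof is correct and matches the paper's in its essential content: the paper's own proof simply invokes the corresponding lemma of Ostermann (Lemma 4.2 there) and reduces everything to verifying the uniform lower bound $\kappa_d \geq c > 0$ on $\overline{\BB^{N}_R}$ for $d\in\overline{\BB^{N}_{R_0^{-1}}(d_0)}$, which follows from $|d| \leq |d_0| + R_0^{-1} < R^{-1} \leq 1$ --- exactly the preparatory observation you make. Your Taylor-expansion-with-integral-remainder plus Moser/Banach-algebra argument is a self-contained unpacking of what that citation encapsulates, and your constants are uniform in $d$ for precisely the reason you identify.
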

	\begin{proof}
		By \cite[Lemma 4.2]{ostermann2024stable}, it suffices to verify the existence of a constant $c > 0$ such that  
		\begin{align*}
			\kappa_d(y) \geq c
		\end{align*}
		for all $|y| \leq 1$ and for all $d \in \overline{\mathbb{B}^{N}_{R_0^{-1}}(d_0)}$. This follows from the estimate  
		\begin{align*}
			|d| \leq |d_0| + R_0^{-1} < R^{-1} \leq 1.
		\end{align*}
	\end{proof}
	
	\subsection{Stabilized nonlinear evolution}
	\begin{defn}
		Let $(N,p,k,R)\in\NN\times\RR_{>1}\times \NN\times\RR_{\ge 1}$ with $k>\frac{N}{2}$. Put
		\begin{equation*}
			\omega_{p} \coloneqq \min\left\{1,s_{p} \right\} > 0
		\end{equation*}
		and let $-\omega_{p}<\omega_{0}<0$ be arbitrary but fixed. We define a Banach space $(\mathcal{X}^{k}(\BB^{N}_R),\|\,.\,\|_{\mathcal{X}^{k}(\BB^{N}_R)})$ by
		\begin{align*}
			\mathcal{X}^{k}(\BB^{N}_R) &\coloneqq \Big\{\mathbf{q} \in C\big( [0,\infty),\mathcal{H}^{k}(\BB^{N}_R) \big) \,\Big|\, \|\mathbf{q}(s) \|_{\mathcal{H}^{k}(\BB^{N}_R)} \lesssim \ee^{\omega_{0}s} \text{ all } s\geq 0 \Big\} \,, \\
			\|\mathbf{q} \|_{\mathcal{X}^{k}(\BB^{N}_R)} &\coloneqq \sup_{s\geq 0} \Big( \ee^{-\omega_{0}s}  \|\mathbf{q}(s) \|_{\mathcal{H}^{k}(\BB^{N}_R)} \Big) \,.
		\end{align*}
		We also define for $\delta>0$ a closed ball
		\begin{equation*}
			\mathcal{X}^{k}_{\delta}(\BB^{N}_R) \coloneqq \{\mathbf{q} \in \mathcal{X}^{k}(\BB^{N}_R) \mid \|\mathbf{q} \|_{\mathcal{X}^{k}(\BB^{N}_R)} \leq \delta \} \,.
		\end{equation*}
	\end{defn}
	
	\begin{defn}
		\label{CorrTerm}
		Let $(N,p,k,R)\in\NN\times\RR_{>1}\times \NN\times\RR_{\ge 1}$. Let $\mathbf{P}_{0,d},\mathbf{P}_{1,d} \in \mathcal{L} \left( \mathcal{H}^{k}(\BB^{N}_R) \right)$ be the spectral projections from  \Cref{SpecThmLbeta} and set $\mathbf{P}_{d} \coloneqq \mathbf{P}_{0,d} + \mathbf{P}_{1,d}$. We define a correction term
		\begin{align*}
			\mathbf{C}_{d} : \, &\mathcal{H}^{k}(\BB^{N}_R) \times \mathcal{X}^{k}_{\delta}(\BB^{N}_R) \rightarrow \mathcal{H}^{k}(\BB^{N}_R) \\&
			(\mathbf{f},\mathbf{q}) \mapsto \mathbf{P}_{d}\mathbf{f} + \mathbf{P}_{0,d} \int_{0}^{\infty} \mathbf{N}_{d}(\mathbf{q}(s')) \dd s' + \mathbf{P}_{1,d} \int_{0}^{\infty} \ee^{-s'} \mathbf{N}_{d}(\mathbf{q}(s')) \dd s' \,.
		\end{align*}
	\end{defn}
	Note that the correction term belongs to the unstable subspace $\ran(\mathbf{P}_{d})\subset\mathcal{H}^{k}(\BB^{N}_R)$. Subtracting it from the initial data in the Duhamel formula stabilizes the nonlinear wave evolution.
	\begin{prop}
		\label{NonLinEvoStableSub}
		Let $d_0 \in \BB^N$ and $(N,p,k,R)\in \NN\times\RR_{>1}\times\NN\times\RR_{\ge 1}$ such that $k> \frac{N}{2}$ and $R<|d_0|^{-1}$.   Let $-\omega_{p}<\omega_{0}<0$. There are constants $R_0>R/(1-|d_0|R)$, $0<\delta_{0}<1$, and $C_{0} > 1$, such that for all $0<\delta\leq\delta_{0}$, $C\geq C_{0}$, all $d\in\overline{\BB^{N}_{R_0^{-1}}(d_0)}$ and all $\mathbf{f}\in\mathcal{H}^{k}(\BB^{N}_R)$ with $\| \mathbf{f} \|_{\mathcal{H}^{k}(\BB^{N}_R)} \leq \frac{\delta}{C}$ there is a unique $\mathbf{q}_{d}\in\mathcal{X}^{k}(\BB^{N}_R)$ such that $\|\mathbf{q}_{d} \|_{\mathcal{X}^{k}(\BB^{N}_R)} \leq \delta$ and
		\begin{equation}
			\label{StabilizedSolution}
			\mathbf{q}_{d}(s) = \mathbf{S}_{d}(s) (\mathbf{f} - \mathbf{C}_{d}(\mathbf{f},\mathbf{q}_{d})) + \int_{0}^{s} \mathbf{S}_{d}(s-s')\mathbf{N}_{d}(\mathbf{q}_{d}(s')) \dd s'
		\end{equation}
		for all $s\geq 0$. Moreover, the data-to-solution map
		\begin{equation*}
			\mathcal{H}^{k}_{\delta}(\BB^{N}_R) \rightarrow \mathcal{X}^{k}(\BB^{N}_R) \,, \qquad \mathbf{f} \mapsto\mathbf{q}_{d} \,,
		\end{equation*}
		is Lipschitz continuous.
	\end{prop}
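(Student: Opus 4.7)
The plan is to realize $\mathbf{q}_d$ as the unique fixed point of the map
\[
\mathbf{K}_d[\mathbf{q}](s) \coloneqq \mathbf{S}_d(s)\bigl(\mathbf{f}-\mathbf{C}_d(\mathbf{f},\mathbf{q})\bigr) + \int_{0}^{s} \mathbf{S}_d(s-s')\mathbf{N}_d(\mathbf{q}(s'))\dd s'
\]
on the closed ball $\mathcal{X}^k_\delta(\BB^N_R)$, which is precisely equation \eqref{StabilizedSolution}. I would then verify that $\mathbf{K}_d$ is a contractive self-map of $\mathcal{X}^k_\delta(\BB^N_R)$ with all constants uniform in $d\in\overline{\BB^N_{R_0^{-1}}(d_0)}$, and conclude by Banach's fixed-point theorem.

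The key algebraic observation is that the correction term $\mathbf{C}_d$ is engineered so that the unstable components of $\mathbf{K}_d[\mathbf{q}]$ collapse into exponentially small tails. Inserting $\mathbf{I}=(\mathbf{I}-\mathbf{P}_d)+\mathbf{P}_{0,d}+\mathbf{P}_{1,d}$ into the Duhamel integral and using the commutation relations $\mathbf{S}_d(s)\mathbf{P}_{0,d}=\mathbf{P}_{0,d}$ and $\mathbf{S}_d(s)\mathbf{P}_{1,d}=\ee^{s}\mathbf{P}_{1,d}$ from \Cref{stableEvo}, a direct calculation yields
\begin{align*}
\mathbf{K}_d[\mathbf{q}](s) &= \mathbf{S}_d(s)(\mathbf{I}-\mathbf{P}_d)\mathbf{f} + \int_{0}^{s} \mathbf{S}_d(s-s')(\mathbf{I}-\mathbf{P}_d)\mathbf{N}_d(\mathbf{q}(s'))\dd s' \\
&\quad - \mathbf{P}_{0,d}\int_{s}^{\infty} \mathbf{N}_d(\mathbf{q}(s'))\dd s' - \ee^{s}\mathbf{P}_{1,d}\int_{s}^{\infty} \ee^{-s'}\mathbf{N}_d(\mathbf{q}(s'))\dd s',
\end{align*}
so that the unstable parts of the Duhamel integral are converted into tails $\int_{s}^{\infty}$.

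With this representation in hand I would estimate each of the four terms in the $\mathcal{X}^k$ norm. \Cref{stableEvo} gives $\|\mathbf{S}_d(s)(\mathbf{I}-\mathbf{P}_d)\mathbf{g}\|_{\mathcal{H}^k(\BB^N_R)}\lesssim\ee^{\omega_0 s}\|\mathbf{g}\|_{\mathcal{H}^k(\BB^N_R)}$ uniformly in $d$; \Cref{NonLinLip} delivers the quadratic bound $\|\mathbf{N}_d(\mathbf{q}(s'))\|_{\mathcal{H}^k(\BB^N_R)}\lesssim\delta^{2}\ee^{2\omega_0 s'}$; and the spectral projections $\mathbf{P}_{0,d},\mathbf{P}_{1,d}$ are uniformly bounded in $d$, being contour integrals of $\mathbf{R}_{\mathbf{L}_d}(z)$ on fixed small circles around $0$ and $1$ that can be chosen to lie inside the uniform resolvent region supplied by \Cref{prop-uniform-bound}. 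Using $\omega_0<0$, so that $\int_{0}^{s}\ee^{\omega_0(s-s')}\ee^{2\omega_0 s'}\dd s'\lesssim\ee^{\omega_0 s}$ and $\int_{s}^{\infty}\ee^{2\omega_0 s'}\dd s'\lesssim\ee^{2\omega_0 s}$, together with $2\omega_0-1<0$ for the $\mathbf{P}_{1,d}$ tail, each contribution to $\ee^{-\omega_0 s}\|\mathbf{K}_d[\mathbf{q}](s)\|_{\mathcal{H}^k(\BB^N_R)}$ is bounded by $C_1\delta/C+C_2\delta^{2}$, uniformly in $d$ and $s\geq 0$. Choosing $\delta_0$ small and $C_0$ large then yields the self-mapping property. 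Applying the same estimates to $\mathbf{K}_d[\mathbf{q}]-\mathbf{K}_d[\tilde{\mathbf{q}}]$, together with the Lipschitz bound from \Cref{NonLinLip}, gives $\|\mathbf{K}_d[\mathbf{q}]-\mathbf{K}_d[\tilde{\mathbf{q}}]\|_{\mathcal{X}^k(\BB^N_R)}\lesssim\delta\|\mathbf{q}-\tilde{\mathbf{q}}\|_{\mathcal{X}^k(\BB^N_R)}$, a strict contraction for $\delta$ small. Banach's theorem produces the unique $\mathbf{q}_d$, whose Lipschitz dependence on $\mathbf{f}$ follows from the standard parameter-stability of a contraction's fixed point, since $\mathbf{f}\mapsto\mathbf{K}_d[\,\cdot\,]$ is affine in $\mathbf{f}$ with uniform Lipschitz constant.

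The main obstacle relative to the case $d=0$ treated in \cite[Proposition 4.3]{ostermann2024stable} is ensuring that every constant is uniform in $d$ over the neighborhood $\overline{\BB^N_{R_0^{-1}}(d_0)}$: the stable semigroup decay is handled by \Cref{stableEvo}, the projection norms by the uniform resolvent bound of \Cref{prop-uniform-bound} on suitably chosen contours around $0$ and $1$, and the nonlinear constants by the uniform lower bound $\kappa_d\geq c>0$ on $\overline{\BB^N_R}$ already exploited in the proof of \Cref{NonLinLip}.
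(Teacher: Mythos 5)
Your argument is correct and is essentially the proof the paper relies on: the paper simply cites \cite[Proposition 4.1]{ostermann2024stable}, whose content is exactly this Lyapunov--Perron-type contraction on $\mathcal{X}^{k}_{\delta}(\BB^{N}_R)$, with the correction term converting the unstable Duhamel components into tail integrals and with uniformity in $d$ supplied by \Cref{stableEvo}, \Cref{prop-uniform-bound}, and \Cref{NonLinLip}. No gaps.
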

	\begin{proof}
		The proof follows similarly to \cite[Proposition 4.1]{ostermann2024stable}, with $\beta \in \overline{\BB^{N}_{R_0^{-1}}}$ replaced by $d\in\overline{\BB^{N}_{R_0^{-1}}(d_0)}$. 
	\end{proof}
	
	\subsection{Stable flow near the blowup solution}
	The initial data for our abstract Cauchy problem \eqref{abstractCauchy} are introduced as follows.
	\begin{defn} 
		Let $d_0 \in \BB^N$ and $(N,p,k,R)\in\NN\times\RR_{>1}\times \NN\times\RR_{\ge 1}$ such that $k> \frac{N}{2}$ and $R<|d_0|^{-1}$.  Let $R_0>R/(1-|d_0|R)$,  $d\in\overline{\BB^{N}_{R_0^{-1}}(d_0)}$, and $T>0$. We define the operator
		\begin{equation*}
			\mathbf{Q}_{d,T} : C^{\infty}(\RR^{N})^{2} \rightarrow \mathcal{H}^{k}(\BB^{N}_R) \,, \qquad \mathbf{f} \mapsto \mathbf{f}^{T} + \mathbf{f}_{0}^{T} - \mathbf{f}_{d} \,,
		\end{equation*}
		where $\mathbf{f}^{T},\mathbf{f}_{0}^{T},\mathbf{f}_{d}\in C^{\infty}(\overline{\BB^{N}_R})^{2}$ are defined by
		\begin{align*}
			&&
			\mathbf{f}^{T}(y) &=
			\begin{bmatrix}
				\hfill T^{s_{p}} f_{1}(Ty) \\
				T^{s_{p}+1} f_{2}(Ty)
			\end{bmatrix}
			\,, &
			\mathbf{f}_{0}^{T}(y) &=
			\begin{bmatrix}
				T^{s_{p}} \kappa_0 \gamma(d_0)^{-s_p} (1+ T d_0\cdot y)^{-s_p} \\
				T^{s_{p}+1} s_p \kappa_0 \gamma(d_0)^{-s_p} (1+ T d_0\cdot y)^{-s_p-1} 
			\end{bmatrix}
			\,, & \\
			&&
			\mathbf{f}_{d}(y) &=
			\begin{bmatrix}
				\hfill \kappa_0 \gamma(d)^{-s_{p}} (1+d\cdot y)^{-s_{p}} \\
				s_{p} \kappa_0 \gamma(d)^{-s_{p}} (1+d\cdot y)^{-s_{p}-1}
			\end{bmatrix}
			\,. && &
		\end{align*}
	\end{defn}
	To guarantee a stabilized evolution for such data, we need a smallness property of the initial data operator. This is ensured by the following lemma.
	\begin{lem}
		\label{InitDatOpSmall}
			Let $d_0 \in \BB^N$ and $(N,p,k,R)\in\NN\times\RR_{>1}\times \NN\times\RR_{\ge 1}$ such that $k> \frac{N}{2}$ and $R<|d_0|^{-1}$.  Let $R_0>R/(1-|d_0|R)$,  $d\in\overline{\BB^{N}_{R_0^{-1}}(d_0)}$, and $T\in\overline{\BB^{1}_{\frac{1}{2}}(1)}$. We have
		\begin{equation*}
			\mathbf{Q}_{d,T}(\mathbf{f}) = \mathbf{f}^{T} + (T-1) \mathbf{f}_{1,d} + (d^{i}-d_0^i) \mathbf{f}_{0,d,i} + \mathbf{r}(d,T) \,,
		\end{equation*}
		for any $\mathbf{f}\in C^{\infty}(\RR^{N})^{2}$, where $\mathbf{f}_{1,d},\mathbf{f}_{0,d,i}$ are the symmetry modes from \Cref{EV}, and
		\begin{equation*}
			\| \mathbf{r}(d,T) \|_{\mathcal{H}^{k}(\BB^{N}_R)} \lesssim |T-1|^{2} + |d-d_0|^{2}
		\end{equation*}
		for all $d\in\overline{\BB^{N}_{R_0^{-1}}(d_0)}$ and all $T\in\overline{\BB^{1}_{\frac{1}{2}}(1)}$.
	\end{lem}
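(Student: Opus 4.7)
The plan is to treat this as a Taylor expansion in the parameters $(T,d)$ around the base point $(1,d_0)$ of the smooth map $(T,d)\mapsto \mathbf{f}_{0}^{T}-\mathbf{f}_{d}$, valued in $\mathcal{H}^{k}(\BB^{N}_R)$. Since $\mathbf{Q}_{d,T}(\mathbf{f})=\mathbf{f}^{T}+\mathbf{f}_{0}^{T}-\mathbf{f}_{d}$, the entire content of the lemma reduces to expanding $\mathbf{f}_{0}^{T}-\mathbf{f}_{d}$.

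First I would observe that the expressions defining $\mathbf{f}_{0}^{T}$ and $\mathbf{f}_{d}$ involve only the analytic factors $T^{s_{p}+\ell}$, $\gamma(d)^{-s_{p}}=(1-|d|^{2})^{s_{p}/2}$ and powers of $(1+Td_0\cdot y)$ or $(1+d\cdot y)$. The constraint $R<|d_0|^{-1}$ together with $d\in\overline{\BB^{N}_{R_0^{-1}}(d_0)}$ ensures $|d\cdot y|\le |d|R<1$ uniformly for $y\in\overline{\BB^{N}_R}$, so all factors stay bounded away from $0$ and the map $(T,d)\mapsto(\mathbf{f}_{0}^{T}-\mathbf{f}_{d})$ is real-analytic, hence smooth, into $\mathcal{H}^{k}(\BB^{N}_R)$. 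In particular, $\mathbf{f}_{0}^{T}\big|_{T=1}=\mathbf{f}_{d_0}$, so $(\mathbf{f}_{0}^{T}-\mathbf{f}_{d})\big|_{(1,d_0)}=\mathbf{0}$.

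Next I would compute the first-order derivatives at $(1,d_0)$ and identify them with the symmetry modes. A direct calculation of $\pd_{T}(T^{s_{p}}(1+Td_0\cdot y)^{-s_{p}})\big|_{T=1}=s_{p}(1+d_0\cdot y)^{-s_{p}-1}$, and similarly for the second component with exponent shifted by one, yields
\begin{equation*}
\pd_{T}\mathbf{f}_{0}^{T}\big|_{T=1,d=d_0}=\mathbf{f}_{1,d_0}.
\end{equation*}
For the $d$-derivative, differentiating $(1-|d|^{2})^{s_{p}/2}(1+d\cdot y)^{-s_{p}}$ produces two terms of the form $-s_{p}d_{i}(1-|d|^{2})^{s_{p}/2-1}(1+d\cdot y)^{-s_{p}}-s_{p}(1-|d|^{2})^{s_{p}/2}(1+d\cdot y)^{-s_{p}-1}y_{i}$; using $\gamma(d)^{-s_{p}+2}=(1-|d|^{2})^{s_{p}/2-1}$ and comparing with the two bracketed pieces in \eqref{EV0beta}, one gets
\begin{equation*}
-\pd_{d^{i}}\mathbf{f}_{d}\big|_{d=d_0}=\mathbf{f}_{0,d_0,i},
\end{equation*}
and the analogous identity for the second component. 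Hence by Taylor's theorem in $\mathcal{H}^{k}(\BB^{N}_R)$,
\begin{equation*}
\mathbf{f}_{0}^{T}-\mathbf{f}_{d}=(T-1)\mathbf{f}_{1,d_0}+(d^{i}-d_0^{i})\mathbf{f}_{0,d_0,i}+\tilde{\mathbf{r}}(d,T),
\end{equation*}
with $\|\tilde{\mathbf{r}}(d,T)\|_{\mathcal{H}^{k}(\BB^{N}_R)}\lesssim|T-1|^{2}+|d-d_0|^{2}$, where smoothness of the map and compactness of the parameter region $\overline{\BB^{1}_{1/2}(1)}\times\overline{\BB^{N}_{R_0^{-1}}(d_0)}$ provide a uniform bound on the second derivatives.

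Finally I would absorb the mismatch between evaluation at $d_0$ and at $d$ into the remainder. Since $d\mapsto\mathbf{f}_{1,d}$ and $d\mapsto\mathbf{f}_{0,d,i}$ are smooth into $\mathcal{H}^{k}(\BB^{N}_R)$, we have $\|\mathbf{f}_{1,d}-\mathbf{f}_{1,d_0}\|_{\mathcal{H}^{k}(\BB^{N}_R)}\lesssim|d-d_0|$ and similarly for $\mathbf{f}_{0,d,i}$, so
\begin{equation*}
(T-1)(\mathbf{f}_{1,d_0}-\mathbf{f}_{1,d})+(d^{i}-d_0^{i})(\mathbf{f}_{0,d_0,i}-\mathbf{f}_{0,d,i})
\end{equation*}
has $\mathcal{H}^{k}$-norm bounded by $|T-1||d-d_0|+|d-d_0|^{2}\lesssim|T-1|^{2}+|d-d_0|^{2}$ via AM--GM. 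Setting $\mathbf{r}(d,T)$ to be the sum of this term and $\tilde{\mathbf{r}}(d,T)$ yields the claim.

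The only nontrivial step is the algebraic verification that the partial derivatives coincide with the symmetry modes \eqref{EV0beta}--\eqref{EV1beta}; this is not surprising since those modes were generated precisely by the time-translation and Lorentz symmetries of \Cref{nlw-ss-s2}, which act on the stationary family $(T,d)\mapsto(\mathbf{f}_{0}^{T},\mathbf{f}_{d})$ in exactly the way encoded by the derivatives above, but it must be checked by direct computation component by component.
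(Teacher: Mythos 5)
Your proposal is correct and takes essentially the same route as the paper, whose proof simply defers to Ostermann's Lemma 4.3: a first-order Taylor expansion of the smooth map $(T,d)\mapsto \mathbf{f}_{0}^{T}-\mathbf{f}_{d}$ into $\mathcal{H}^{k}(\BB^{N}_R)$ at $(1,d_0)$, identification of the first derivatives with the symmetry modes, and absorption of the $d_0\to d$ mismatch in the modes into the quadratic remainder. Two small caveats, neither of which is a flaw in your argument: your computation of $-\pd_{d^{i}}\mathbf{f}_{d}$ is right and matches the first component of \eqref{EV0beta} exactly, but in the second component the $d_{i}$-term as printed carries $\gamma(d)^{-s_{p}}$ where the kernel relation $f_{2}=y\cdot\nabla f_{1}+s_{p}f_{1}$ forces $\gamma(d)^{-s_{p}+2}$ (a typo in the restated lemma); and keeping $(1+Td_0\cdot y)^{-1}$ uniformly bounded for all $T\in\overline{\BB^{1}_{1/2}(1)}$ requires $|d_0|R<2/3$, which the stated hypotheses do not quite guarantee --- this is an over-generous parameter range in the lemma itself and is harmless in the application, where $|T-1|\leq\delta/C$.
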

	\begin{proof}
		The proof follows similarly to  \cite[Lemma 4.3]{ostermann2024stable}, with $\beta \in \overline{\BB^{N}_{R_0^{-1}}}$ replaced by $d\in\overline{\BB^{N}_{R_0^{-1}}(d_0)}$, $\mathbf{U}_{\beta,T}$ replaced by $	\mathbf{Q}_{d,T}$, and $|\beta|$ replaced by $|d-d_0|$. 
	\end{proof}
	
	It remains to remove the correction term in \Cref{StabilizedSolution} to obtain a global solution to \Cref{NonLinDuhamel}.
	\begin{prop}
		\label{StableNonlinFlowMild}
		Let $d_0\in \BB^N$ and $(N,p,k,R)\in\NN\times\RR_{>1}\times \NN\times\RR_{\ge 1}$ such that $k> \frac{N}{2}$ and $R<|d_0|^{-1}$. Let $0 < \varepsilon < \omega_{p}$. There are constants $0<\delta_{\varepsilon}<1$, $C_{\varepsilon} > 1$ such that for all $0<\delta\leq\delta_{\varepsilon}$, $C\geq C_{\varepsilon}$ and for all real-valued $\mathbf{f}\in C^{\infty}(\RR^{N})^{2}$ with $\| \mathbf{f} \|_{\mathcal{H}^{k}(\RR^{N})} \leq \frac{\delta}{C^{2}}$ there are parameters $d^{*}\in\overline{\BB^{N}_{\frac{\delta}{C}}(d_0)}$, $T^{*}\in\overline{\BB^{1}_{\frac{\delta}{C}}(1)}$ and a unique $\mathbf{q}_{d^{*},T^{*}}\in C\big([0,\infty),\mathcal{H}^{k}(\BB^{N}_R)\big)$ such that $\|\mathbf{q}_{d^{*},T^{*}}(s) \|_{\mathcal{H}^{k}(\BB^{N}_R)} \leq \delta \ee^{(-\omega_{p}+\varepsilon)s}$ and
		\begin{equation}
			\label{CauchyMild}
			\mathbf{q}_{d^{*},T^{*}}(s) = \mathbf{S}_{d^{*}}(s)\mathbf{Q}_{d^{*},T^{*}}(\mathbf{f}) + \int_{0}^{s} \mathbf{S}_{d^{*}}(s-s')\mathbf{N}_{d^{*}}(\mathbf{q}_{d^{*},T^{*}}(s')) \dd s'
		\end{equation}
		for all $s\geq 0$.
	\end{prop}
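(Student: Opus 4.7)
The strategy is a modulation/Brouwer argument mirroring \cite[Proposition 4.3]{ostermann2024stable}, with the $(N+1)$-dimensional unstable subspace now being $\ran(\mathbf{P}_{d})$ rather than $\ran(\mathbf{P}_{\beta})$. For each parameter pair $(d,T)$ near $(d_{0},1)$, \Cref{NonLinEvoStableSub} furnishes a stabilized solution $\mathbf{q}_{d,T}$ satisfying \eqref{StabilizedSolution}, which differs from the desired mild equation \eqref{CauchyMild} only by the finite-rank correction $\mathbf{C}_{d}(\mathbf{Q}_{d,T}(\mathbf{f}), \mathbf{q}_{d,T})\in\ran(\mathbf{P}_{d})$. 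Since $\dim\ran(\mathbf{P}_{d})=N+1$ by \Cref{SpecThmLbeta}, the plan is to use the $N+1$ free parameters $(T-1,d-d_{0})$ to force this obstruction to vanish; once it does, \eqref{StabilizedSolution} reduces exactly to \eqref{CauchyMild}.

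Concretely, fix $\omega_{0}=-\omega_{p}+\varepsilon\in(-\omega_{p},0)$. \Cref{InitDatOpSmall} gives $\|\mathbf{Q}_{d,T}(\mathbf{f})\|_{\mathcal{H}^{k}(\BB^{N}_R)}\lesssim \delta/C^{2}+\delta/C$ on $\overline{\BB^{N}_{\delta/C}(d_{0})}\times\overline{\BB^{1}_{\delta/C}(1)}$, so after enlarging $C_{\varepsilon}$ the hypothesis of \Cref{NonLinEvoStableSub} is satisfied and we obtain a unique $\mathbf{q}_{d,T}\in\mathcal{X}^{k}_{\delta}(\BB^{N}_R)$. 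Since $\{\mathbf{f}_{1,d},\mathbf{f}_{0,d,1},\ldots,\mathbf{f}_{0,d,N}\}$ is a basis of $\ran(\mathbf{P}_{d})$ with Gram matrix uniformly bounded away from zero for $d\in\overline{\BB^{N}_{R_{0}^{-1}}(d_{0})}$, I would decompose
\begin{equation*}
\mathbf{C}_{d}(\mathbf{Q}_{d,T}(\mathbf{f}),\mathbf{q}_{d,T}) = \bigl[(T-1)+\Lambda_{0}(d,T)\bigr]\mathbf{f}_{1,d} + \bigl[(d^{i}-d_{0}^{i})+\Lambda_{i}(d,T)\bigr]\mathbf{f}_{0,d,i}.
\end{equation*}
Using \Cref{InitDatOpSmall} to isolate the linear contribution from $\mathbf{P}_{d}(\mathbf{f}^{T}+\mathbf{r}(d,T))$ and the quadratic estimate of \Cref{NonLinLip} — which, combined with $\|\mathbf{q}_{d,T}(s')\|_{\mathcal{H}^{k}(\BB^{N}_R)}\lesssim\delta\ee^{\omega_{0}s'}$, yields the integrable bound $\|\mathbf{N}_{d}(\mathbf{q}_{d,T}(s'))\|_{\mathcal{H}^{k}(\BB^{N}_R)}\lesssim\delta^{2}\ee^{2\omega_{0}s'}$ — one obtains $|\Lambda_{0}|+\sum_{i}|\Lambda_{i}|\lesssim\delta/C^{2}+|T-1|^{2}+|d-d_{0}|^{2}+\delta^{2}$. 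The map $\Phi(d,T):=\bigl(d-\Lambda_{i}(d,T)\mathbf{e}_{i},\,1-\Lambda_{0}(d,T)\bigr)$ then sends the compact convex product $\overline{\BB^{N}_{\delta/C}(d_{0})}\times\overline{\BB^{1}_{\delta/C}(1)}$ into itself for $C\geq C_{\varepsilon}$ large and $\delta\leq\delta_{\varepsilon}$ small, and Brouwer's fixed point theorem produces $(d^{*},T^{*})$ at which the correction vanishes, yielding the claimed decay $\|\mathbf{q}_{d^{*},T^{*}}(s)\|_{\mathcal{H}^{k}(\BB^{N}_R)}\leq\delta\ee^{(-\omega_{p}+\varepsilon)s}$.

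The main technical obstacle is the continuity of $\Phi$: because $\mathbf{S}_{d}$, $\mathbf{N}_{d}$, $\mathbf{P}_{d}$, and the modes $\mathbf{f}_{1,d},\mathbf{f}_{0,d,i}$ all vary with $d$, one must track $d$-dependence through the contraction mapping underpinning \Cref{NonLinEvoStableSub} to show $(d,T)\mapsto\mathbf{q}_{d,T}$ is continuous into $\mathcal{X}^{k}(\BB^{N}_R)$. This reduces to a standard perturbation estimate using the uniform resolvent bound of \Cref{prop-uniform-bound} (which is precisely where the spectral equivalence \Cref{SpectralEquivalence} enters), the uniform semigroup decay of \Cref{stableEvo}, and the Lipschitz bound of \Cref{NonLinLip} — all of which hold uniformly over the compact parameter ball $\overline{\BB^{N}_{R_{0}^{-1}}(d_{0})}$, which is the reason the earlier sections were formulated without any assumption of smallness on $d_{0}$.
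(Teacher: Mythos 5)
Your proposal is correct and takes essentially the same route as the paper, whose proof of this proposition consists of a single sentence deferring to \cite[Proposition 4.2]{ostermann2024stable} with $\beta^{*}\in\overline{\BB^{N}_{\delta/C}}$ replaced by $d^{*}\in\overline{\BB^{N}_{\delta/C}(d_0)}$; your Brouwer fixed-point scheme on the finite-rank correction term, the expansion via \Cref{InitDatOpSmall}, and the continuity-in-$(d,T)$ issue you flag are exactly that argument written out. (Minor slip: the relevant reference is Ostermann's Proposition 4.2, not 4.3, the latter being the upgrade to classical solutions.)
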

	\begin{proof}
			The proof follows similarly to  \cite[Proposition 4.2]{ostermann2024stable}, with $\beta^* \in \overline{\BB^{N}_{\frac{\delta}{C}}}$ replaced by $d^{*}\in\overline{\BB^{N}_{\frac{\delta}{C}}(d_0)}$.
	\end{proof}
	
	By Sobolev embedding and induction, the just obtained mild solution is in fact a jointly smooth classical solution, which gives the proof of \Cref{StableNonlinFlowClassic}.

%%%%%%%%%%%%%%%%%%%
\bibliographystyle{abbrv}
\bibliography{ref-nlw-superconformal}

\end{document}